\documentclass[11pt]{article}
 \usepackage{amsmath, amsthm, amssymb, bbm, setspace,bigints}
 \usepackage[margin=1 in]{geometry}
\usepackage{caption}
\usepackage{subcaption}
\usepackage[toc,page]{appendix}
\usepackage{cite}         
\usepackage[pdftex]{graphicx}
\usepackage{pdfpages}
\usepackage{epstopdf}
\usepackage{booktabs}
\usepackage{authblk}

%
\doublespacing
\bibliographystyle{apalike}
\pagestyle{plain}
\usepackage{amsthm,bbm,amssymb}
\usepackage{amsmath}
\usepackage{natbib}
\usepackage[colorlinks,citecolor=blue,urlcolor=blue,filecolor=blue,backref=page]{hyperref}
\usepackage{graphicx}

\usepackage[utf8]{inputenc}
\usepackage{amsmath,amssymb,natbib,xcolor,multicol,url,mhequ}
\usepackage{graphicx,bbm,xr}
\usepackage{booktabs,epstopdf,color}
\usepackage[space]{grffile}
\usepackage{lineno}
\usepackage{multirow}

\def\m{\mathcal}
\def\mb{\mathbb}

\newcommand{\abs}[1]{\left\vert#1\right\vert}



\newcommand{\norm}[1]{\left\lVert #1 \right\rVert}

\DeclareMathOperator*{\argmin}{arg\,min}

\def\m{\mathcal}
\def\mb{\mathbb}

\newcommand \bbP{\mathbb{P}}
\newcommand \bbE{\mathbb{E}}
\newcommand{\be}{\begin{equs}}
\newcommand{\ee}{\end{equs}}

\numberwithin{equation}{section}
\theoremstyle{plain}
\newtheorem{theorem}{Theorem}[section]
\newtheorem{lem}{Lemma}[section]

\newtheorem{corollary}{Corollary}[section]
\newtheorem{proposition}{Proposition}[section]
\newtheorem{remark}{Remark}[section]

\usepackage{lipsum}
\usepackage{amsfonts,amsmath}
\usepackage{tikz-cd}
\usepackage{epstopdf}
\usepackage{algorithm} 
\usepackage{algorithmic}  
\usepackage[algo2e,ruled,vlined]{algorithm2e}

\title{Statistical Guarantees for Transformation Based Models with Applications to Implicit Variational Inference}

\author[1]{Sean Plummer\thanks{splumme@tamu.edu, contributed equally to this work}}
\author[2]{Shuang Zhou\thanks{szhou98@asu.edu, contributed equally to this work}}
\author[1]{Anirban Bhattacharya\thanks{anirbanb@stat.tamu.edu}}
\author[3]{David Dunson\thanks{dunson@duke.edu}}
\author[1]{Debdeep Pati\thanks{debdeep@stat.tamu.edu}}
\affil[1]{Department of Statistics,  Texas A\&M University, College Station, TX 77843, USA}
\affil[2]{School of Mathematical and Statistical Sciences,  Arizona State University, Tempe, AZ 85281, USA}
\affil[3]{Department of Statistics,  Duke University, Durham, NC 27708, USA}
\begin{document}
\maketitle
\begin{abstract}
Transformation-based methods have been an attractive approach in non-parametric inference for problems such as unconditional and conditional density estimation due to their unique hierarchical structure that models the data as flexible transformation of a set of common latent variables. More recently, transformation-based models have been used in variational inference (VI) to construct flexible implicit families of variational distributions. However, their use in both non-parametric inference and variational inference lacks theoretical justification. We provide theoretical justification for the use of non-linear latent variable models (NL-LVMs) in non-parametric inference by showing that the support of the transformation induced prior in the space of densities is sufficiently large in the $L_1$ sense. We also show that, when a Gaussian process (GP) prior is placed on the transformation function, the posterior 
concentrates at the optimal rate up to a logarithmic factor. Adopting the flexibility demonstrated in the non-parametric setting, we use the NL-LVM to construct an implicit family of variational distributions, deemed GP-IVI. We delineate sufficient conditions under which GP-IVI achieves optimal risk bounds and approximates the true posterior in the sense of the Kullback-Leibler divergence. To the best of our knowledge, this is the first work on providing theoretical guarantees for implicit variational inference.
\end{abstract}
\section{Introduction}

Transformation-based models are a powerful class of latent variable models, which rely on a hierarchical 
generative structure for the data.
In their simplest form, these models have the following structure
\begin{eqnarray}
y_i & = & \mu(x_i) + \epsilon_i,\quad \epsilon_i \sim N(0,\sigma^2), \nonumber \\
x_i & \stackrel{iid}{\sim} & g,\label{eq:base}
\end{eqnarray}
for $i=1,\ldots,n$, where $y_i \in \mb R$ is a real-valued observed variable, $\mu$ is the `transformation' function, $x_i$ is a latent (unobserved) variable underlying $y_i$, $g$ is a known density of the latent data (e.g., uniform or standard normal), and we include a Gaussian measurement error with variance $\sigma^2$.  For simplicity in exposition, we consider a very simple case to start but one can certain include multivariate $x_i$ and $y_i$ and other elaborations.

Model (\ref{eq:base}) and its elaborations include many popular methods in the literature.  If we choose a Gaussian process (GP) prior for the function $\mu$, then we obtain a type of GP Latent Variable Model (GP-LVM) \citep{lawrence2004gaussian, lawrence2005probabilistic,lawrence2007hierarchical}.  We can also obtain kernel mixtures as a special case; for example, by choosing a discrete distribution for $g$.  The extremely popular Variational Auto-Encoder (VAE) is based on choosing a deep neural network for $\mu$, and then obtaining a particular variational approximation relying on a separate encoder and decoder neural network \citep{kingma2013auto}.  Refer also to the non-linear latent variable model  (NL-LVM) framework of  
\citep{kundu2014latent} for a nonparametric Bayesian perspective on models related to (\ref{eq:base}).

However, a key question is what are the theoretical properties of `transformation' based models of the form in (\ref{eq:base}).  For example, can this framework be theoretically used to approximate any density with an arbitrarily high degree of accuracy?  Does the accuracy improve with sample size as the optimal rate for density estimation or conditional density estimation (given fixed covariates) problems?

These types of questions have been answered elegantly for many nonparametric Bayes and frequentist density estimation methods. For example, Dirichlet process mixture models (DPMMs) have been very widely 
applied \citep{ferguson1973bayesian,ferguson1974prior,muller1996bayesian,maceachern1999dependent,escobar1995bayesian} and studied in terms of their optimality properties asymptotically \citep{ghosal1999posterior,ghosal2000convergence,ghosal2007posterior,kruijer2010adaptive}. 

When using a continuous distribution $g$, model \eqref{eq:base} leads to a specific class of continuous transformation-based model as the NL-LVM models. Here a GP prior is a natural choice for the unknown transformation \citep{lenk1988lognormal,lenk1991nonparametric,tokdar2010regression, kundu2014latent,tokdar2007estimation, dasgupta2017geometric}, however this approach has two main setbacks. The primary tools used to develop these theoretical results in the context of DPMMs, approximating an arbitrary smooth density using a convolution against a discrete mixing measure, cannot be extended to NL-LVMs in a straightforward manner. The alternative approach using Markov chain Monte Carlo methods comes with theoretical guarantees, but suffers from computational instability owing to lack of conjugacy. This instability propagates through the posterior distribution of the unknown transformation requiring expert parameter tuning and vigilance for guaranteed performance. To mitigate some of these issues associated with a full-blown MCMC, approximate Bayesian methods including the variational inference are proposed \citep{titsias2010bayesian}.

Development of flexible variational families using the reparametrization trick \citep{kingma2013auto, kingma2015variational, jankowiak2018pathwise, figurnov2018implicit} have emerged as a powerful idea over the last decade and continues to flourish, often in parallel with latest developments in generative deep-learning methods. 
While the overarching goal of this trick is to find unbiased estimates of the gradient of the objective function (evidence lower bound in variational inference), one cannot but notice its connection with non-linear latent variable methods. A similar idea is explored as {\em Implicit variational inference}  \citep{huszar2017variational, shi2017kernel} to construct an implicit distribution, a distribution that cannot be analytically specified but can be sampled from. Such a construction brings in certain computational challenges stemming from density ratio estimation. 
More recently, implicit VI was extended to semi-implicit VI \citep{yin2018semi, molchanov2019doubly,titsias2019unbiased} which avoids density ratio estimation by using a semi-implicit variational distribution $q_\phi(\theta)=\int q\{\theta \mid g_\phi(u)\} q(u)du$ where the density $q\{z\mid g_\phi(u)\}$ corresponds to a transformation-based model with transformation $g_\phi$ -- typically taken to be a neural network with parameters $\phi$.  Although VI approaches have shown significant improvements in computational speed their theoretical properties are largely a mystery.

By developing a framework to answer the two previous theoretical questions for NL-LVM with a continuous transform, we are able to find a novel approach to implicit variational inference based on the NL-LVM, for which we can provide strong theoretical guarantees.
Building off of the framework from \cite{kundu2014latent}, we provide a rate-adaptive result for a class of NL-LVM models for density estimation by assigning a rescaled GP prior on the transformation function, and in the process significantly advance the technical understanding of NL-LVM. We provide conditions for the mixing measure to admit a density with respect to Lebesgue measure and show that the prior support of the NL-LVM is at least as large as that of DPMMs. We use the same class of NL-LVM models to construct a flexible implicit variational family, deemed GP-IVI. We show that Kullback--Leibler (KL) divergence between the GP-IVI and the true posterior is stochastically bounded, which is the best possible attainable bound. Additionally, we show that GP-IVI achieves the optimal variational risk bound. To the best knowledge of the authors, these are the first theoretical results in the context of implicit variational inference methods.




\noindent {\em A summary of our contributions.}
Our results are the first to provide a concrete theoretical framework for transformation-based models widely used in Bayesian inference and machine learning. By establishing a connection between NL-LVM with implicit family of distributions, we provide statistical guarantees for implicit variational inference.  Motivated by our findings, transformation-based models have the potential to provide machine learning with a rich class of implicit variational inference methods that come with strong theoretical guarantees.


We close the section by defining some notations in \S \ref{ssec:notations} used throughout the paper. In \S \ref{sec:nllvm} we present an overview of the NL-LVM model as well as several properties of the model. In section \S \ref{sec:theory1} we discuss our two main results for non-parameteric inference using NL-LVM. In \S \ref{sec:theory2} we introduce GP-IVI. We then show that that the KL divergence between the variational posterior and the true posterior is stochastically bounded and argue why this is optimal from a statistical perspective. Inspired by \cite{yang2020alpha}, we additionally present parameter risk bounds of a version of implicit variational inference, which we term as $\alpha$-GP-IVI which is obtained by raising the likelihood to a fractional power $\alpha \in (0, 1)$.

\subsection{Notation} \label{ssec:notations}
We denote the Lesbesgue measure on $\mathbb{R}^p$ by $\lambda$. The supremum norm and $\mbox{L}_1$-norm are denoted by $\norm{\cdot}_{\infty}$ and $\norm{\cdot}_{1}$, respectively. For two density functions $p, q \in \mathcal{F}$, let $h$ denote the Hellinger distance defined as $h^2(p, q) =\int (p^{1/2} - q^{1/2})^2 d\lambda $. Denote the Kullbeck-Leibler divergence between two probability densities $p$ and $q$ with respect to the Lebesgue measure by $D(p||q)=\int p\log(p\slash q) d\lambda$.  We define the additional discrepancy measure $V(p||q)=\int p \log^2(p\slash q)d\lambda$, which will be referred to as the V-divergence. For a set $A$ we use $I_A$ to denote its indicator function. We denote the density of the normal distribution $N(t;0,\sigma^2I_d)$ by $\phi_\sigma(t)$. We denote the convolution of $f$ and $g$ by $f*g(y) = \int f(y - x)g(x)dx$. Absolute continuity of $q$ with respect to $p$ will be denoted $q\ll p$. We denote the set of all probability densities $f \ll \lambda$ by $\mathcal{F}$. The support of a density $f$ is denoted by supp($f$).  For a set $\mathcal{X}$, let $C(\mathcal{X})$ and $C^\beta(\mathcal{X})$, $\beta>0$ denote the spaces of continuous functions and $\beta$-H\"{o}lder space, respectively. 
We write "$\precsim$" for inequality up to a constant multiple. For any $a>0$ denote $\lfloor a \rfloor$ the largest integer that is no greater than $a$. 

\section{A specific transformation-based model}\label{sec:nllvm}
In this section, we focus on an NL-LVM  model \citep{kundu2014latent} in which the response variables are modeled as unknown functions (referred to as the transfer function) of uniformly distributed latent variables with an additive Gaussian error.
This is clearly a specific instance of a transformation-based method; since the inverse c.d.f. transform of uniform random variables can generate draws from any distribution, a prior with large support on the space of transfer functions can approximate draws from any continuous distribution function arbitrarily closely. One can also conveniently approximate a parametric family with  the non-parametric model  by centering the prior on the transfer function on a parametric class of inverse c.d.f. functions. We start from the model formulation and then present a general approximation result of NL-LVM model to the true density under mild regularity conditions.

 \subsection{The NL-LVM model} 
 Suppose we have IID observations $Y_i\in \mb R$ for $i=1,\ldots,n$ with density $f_0 \in \mathcal{F}$, the set of all densities on $\mathbb{R}$ absolutely continuous with respect to the Lebesgue measure $\lambda$. 
 We consider a non-linear latent variable model
\begin{align}\label{eq:nl-lvm}
Y_i &= \mu(\eta_i) + \epsilon _i, \quad \epsilon_i \sim \mbox{N}(0, \sigma^2), \, i=1, \ldots, n\nonumber\\
\mu &\sim \Pi_{\mu}, \quad \sigma \sim \Pi_{\sigma},\quad \eta_i \sim \mbox{U}(0,1),
\end{align}
where $\eta_i$'s are latent variables, $\mu \in C[0, 1]$ is a \emph{transfer function} relating the latent variables to the observed variables and $\epsilon_i$ is an idiosyncratic error. Marginalizing out the latent variable, we obtain the density of $y$ conditional on the transfer function $\mu$ and scale $\sigma$
\begin{eqnarray}\label{eq:gpt_def}
f(y; \mu, \sigma) \stackrel{\text{def}}{=} f_{\mu, \sigma}(y)= \int_{0}^{1}\phi_{\sigma}(y-\mu(x))dx.
\end{eqnarray}
It is not immediately clear whether the class of densities $\{f_{\mu,\sigma}\}$ encompasses a large subset of the density space. The following intuition relates the above class with
continuous convolutions which plays a key role in our proofs. 
Within the support of a continuous density $f_0$, its cumulative distribution function $F_0$ is strictly monotone and hence has an inverse $F_0^{-1}$ satisfying $F_0\{F_0^{-1}(t)\} = t$ for all $t\in \mbox{supp}(f_0)$. Now letting $\mu_0(x) = F_0^{-1}(x)$, one obtains $f_{\mu_0,\sigma}(y) = \phi_{\sigma}*f_0$, the convolution of $f_0$ with a normal density having mean $0$ and standard deviation $\sigma$. This provides a way to approximate $f_0$ by the NL-LVM, as an important property to bounding the KL-divergence. We summarize the approximation result in the next section.

Let $\tilde{\lambda}$ denote the Lebesgue measure on $[0,1]$ and denote the Borel sigma-field of $\mb R$ by $\m B$. For any measurable function $\mu : [0,1] \to \mathbb{R}$, let $\nu_{\mu}$ denote the induced measure on $(\mathbb{R}, \mathcal{B})$, then, for any Borel measurable set $B$, $\nu_{\mu}(B) = \tilde{\lambda}(\mu^{-1}(B))$. 
By the change of variable theorem for induced measures,
\small
\begin{eqnarray}\label{eq:dpgp}
\int_{0}^{1}\phi_{\sigma}(y-\mu(x))dx = \int \phi_{\sigma}(y-t) d\nu_{\mu}(t),
\end{eqnarray}
\normalsize
so that $f_{\mu, \sigma}$ in \eqref{eq:gpt_def} can be expressed as a kernel mixture form 
with mixing distribution $\nu_{\mu}$. It turns out that this mechanism of creating random distributions is very general. Depending on the choice of $\mu$, one can create a large variety of mixing distributions based on this specification. For example, if $\mu$ is a strictly monotone function, then $\nu_{\mu}$ is absolutely continuous with respect to the Lebesgue measure, while choosing $\mu$ to be a step function, one obtains a discrete mixing distribution.

\subsection{Assumptions on true data density $f_0$}
It is widely recognized that one needs certain smoothness assumptions and tail conditions on the true density $f_0$ to derive posterior convergence rates. We make the following assumptions:

\textbf{Assumption F1} We assume $\log{f_0}\in C^{\beta}[0, 1]$. Let $l_j(x)=d^j/dx^j\{\log f_0(x)\}$ be the $j$th derivative for $j=1, \dots, r$ with $r= \lfloor \beta \rfloor$. For any $\beta >0$, we assume that there exists a constant $L > 0$ such that
\begin{eqnarray}\label{eq:tails}
|l_r(x) - l_r(y)| \le L|x-y|^{\beta - r},
\end{eqnarray}
for all $x \neq y$.

The smoothness assumption in the log scale will be used to obtain an optimal approximation error of the GP-transformation-based model to the true $f_0$, providing a key piece in managing the KL-divergence between the true and the model for posterior inference. Similar assumption on the local smoothness appeared in \cite{kruijer2010adaptive}, while in our case a global smoothness assumption is sufficient since $f_0$ is assumed to be compactly supported.

\textbf{Assumption F2} We assume $f_0$ is compactly supported on $[0,1]$, and that there exists some interval $[a,b] \subset [0,1]$ such that $f_0$ is non-decreasing on $[0, a]$, bounded away from $0$ on $[a,b]$ and non-increasing on $[b, 1]$. 

 Assumption \textbf{F2} guarantees that for every $\delta > 0$, there exists a constant $C > 0$ such that $f_0 * \phi_{\sigma} \geq Cf_0$ for every $\sigma < \delta$. 
 Also see \cite{ghosal1999posterior} for similar assumption in density estimation.

\subsection{Approximation property}
As mentioned above, the flexibility of $f_{\mu,\sigma}$ comes from a large class of the induced density measure $\nu_{\mu}$. Now we discuss the approximation of $f_{\mu,\sigma}$ to the true $f_0$ where we utilize its equivalent form of a convolution with a Gaussian kernel. It is well known that the convolution $\phi_{\sigma}*f_0$ can approximate $f_0$ arbitrary closely as the bandwidth $\sigma \to 0$.  For H\"{o}lder-smooth functions, the order of approximation can be characterized in terms of the smoothness. If $f_0 \in C^{\beta}[0, 1]$ with $\beta \le 2$, the standard Taylor series expansion guarantees that $|| \phi_{\sigma}*f_0 - f_0||_{\infty} = O(\sigma^{\beta})$. However, for $\beta > 2$, it requires higher order kernels for the convolution to remain the optimal error \citep{wand1994kernel,devroye1992note}. \cite{kruijer2010adaptive} proposed an iterative procedure to construct a sequence of function $\{f_j\}_{j\ge 0}$ by 
\begin{eqnarray}\label{eq:construction}
f_{j+1} = f_0 - \triangle_{\sigma} f_j, \quad \triangle_{\sigma}f_j = \phi_{\sigma}*f_j - f_j, \quad j \ge 0.
\end{eqnarray} 
We define $f_\beta = f_j$ with integer $j$ such that $\beta \in (2j, 2j+2]$. Under such construction, for $f_0 \in C^{\beta}[0,1]$ the convolution $\phi_{\sigma} * f_\beta$ preserves the optimal error $O(\sigma^\beta)$ (Lemma 1 in \cite{kruijer2010adaptive}). We state a similar result in the following. \\



\begin{proposition} \label{approximation}
For $f_0 \in C^{\beta}[0,1]$ with $\beta \in (2j, 2j+2]$ satisfying Assumptions \textbf{F1} and \textbf{F2}, for $f_\beta$ defined as from the iterative procedure \eqref{eq:construction} we have 
$$ \Vert \phi_{\sigma}*f_\beta - f_0\Vert_\infty = O(\sigma^\beta),$$
and 
\begin{align}\label{eq:approximation}
\phi_{\sigma}*f_\beta(x) = f_0(x) (1+D(x)O(\sigma^\beta)),
\end{align}
where
$$
D(x) = \sum_{i=1}^{r} c_i {|l_j(x)|}^{\frac{\beta}{i}} + c_{r+1},
$$
for non-negative constants $c_i, i = 1, \dots, r+1$, and for any $x \in[0,1]$.
\end{proposition}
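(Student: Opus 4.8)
\medskip
\noindent\textit{Proof strategy.} The plan is to recast $\phi_\sigma*f_\beta$ as the convolution of $f_0$ with a fixed higher--order kernel and then run the classical bias expansion, following \cite{kruijer2010adaptive} but exploiting the global smoothness and compact support afforded by \textbf{F1}--\textbf{F2}. Write $T_\sigma g=\phi_\sigma*g$ and $\Delta_\sigma=T_\sigma-I$, so that \eqref{eq:construction} reads $f_{j+1}=f_0-\Delta_\sigma f_j$. A short induction on $j$ (tracking $f_j-f_0=\sum_{k=1}^j(-1)^k\Delta_\sigma^k f_0$) yields the closed form
\[
\phi_\sigma*f_\beta-f_0=(-1)^j\,\Delta_\sigma^{\,j+1}f_0 .
\]
Expanding the binomial and using that the $k$--fold convolution power $\phi_\sigma^{*k}$ equals $\phi_{\sigma\sqrt{k}}$, one rewrites this as $\phi_\sigma*f_\beta=K_\sigma*f_0$ with $K_\sigma(t)=\sigma^{-1}K(t/\sigma)$ and $K=\sum_{k=1}^{j+1}\binom{j+1}{k}(-1)^{k+1}\phi_{\sqrt{k}}$. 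The identity $\sum_{k}\binom{j+1}{k}(-1)^k=0$ gives $\int K=1$, and in Fourier variables $\widehat K(\xi)=1-(1-e^{-\xi^2/2})^{\,j+1}$, which differs from $1$ only to order $\xi^{2(j+1)}$ at the origin; hence $\int t^m K(t)\,dt=0$ for $1\le m\le 2j+1$. Since $\beta\in(2j,2j+2]$ we have $r:=\lfloor\beta\rfloor\le 2j+1$, so $K$ is a (signed) kernel with all the vanishing moments needed for smoothness $\beta$, and being a finite combination of Gaussians it satisfies $\int|K(s)|\,|s|^{\beta}\,ds<\infty$ and $\int|K(s)|e^{c|s|}\,ds<\infty$ for every $c>0$.

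With this reduction, the second step is the bias estimate. For $x$ in the interior of $[0,1]$, Taylor--expand $f_0(x-t)$ about $x$ to order $r$; the vanishing moments of $K_\sigma$ annihilate every polynomial term, leaving
\[
\phi_\sigma*f_\beta(x)-f_0(x)=\int K_\sigma(t)\Big(f_0(x-t)-\textstyle\sum_{m=0}^{r}\tfrac{(-t)^m}{m!}f_0^{(m)}(x)\Big)\,dt .
\]
By Fa\`a di Bruno, $f_0^{(m)}=f_0\cdot B_m(l_1,\dots,l_m)$ with $B_m$ the complete Bell polynomial, which is isobaric of weight $m$; combined with the H\"older condition \eqref{eq:tails} on $l_r$ (and continuity of $l_1,\dots,l_{r-1}$), the function $f_0^{(r)}$ is $(\beta-r)$--H\"older with local seminorm at $x$ bounded by $f_0(x)$ times a polynomial in $|l_1(x)|,\dots,|l_r(x)|$ (all finite, since \textbf{F1} forces $\log f_0$ to be bounded on $[0,1]$). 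The displayed integral is therefore $O(\sigma^\beta)\,f_0(x)\,\big(\text{polynomial in the }|l_i(x)|\big)$, the $\sigma^\beta$ coming from $\int|K_\sigma(t)|\,|t|^\beta\,dt=O(\sigma^\beta)$. Dividing by $f_0(x)$ gives the multiplicative form \eqref{eq:approximation}: each monomial $\prod_i|l_i(x)|^{a_i}$ that occurs has $\sum_i i\,a_i=r\le\beta$, so weighted AM--GM (with weights $i a_i/r$) bounds it by $\sum_i \tfrac{i a_i}{r}|l_i(x)|^{r/i}\le \sum_i |l_i(x)|^{\beta/i}+C$, which is exactly the shape of $D(x)$. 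The sup--norm bound $\|\phi_\sigma*f_\beta-f_0\|_\infty=O(\sigma^\beta)$ then follows because $f_0$ and $D$ are bounded on $[0,1]$ (alternatively, directly from $\|K_\sigma*f_0-f_0\|_\infty\le \tfrac{1}{r!}\|f_0\|_{C^\beta}\int|K_\sigma(t)|\,|t|^\beta\,dt$).

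The genuinely delicate point is the behaviour near the endpoints of $[0,1]$: the convolution $K_\sigma*f_0$ samples $f_0$ outside its support, where the Taylor expansion above is invalid, so the estimate of the previous paragraph is only immediate on $\{x:\mathrm{dist}(x,\{0,1\})\ \text{of order at least}\ \sigma\}$. This is where Assumption \textbf{F2} enters: the monotonicity of $f_0$ on $[0,a]$ and $[b,1]$ (and the consequent bound $f_0*\phi_\sigma\ge c\,f_0$ for small $\sigma$) lets one compare the truncated convolution with the full--line convolution of a monotone extension, the discarded kernel mass contributing an error absorbed into $f_0(x)D(x)O(\sigma^\beta)$ (indeed super--polynomially small in $\sigma$ once $x$ is a poly--logarithmic window away from the endpoint). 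I expect this boundary bookkeeping, together with the weighted AM--GM step that pins down the precise form of $D(x)$, to be the only parts requiring care; the unrolling of \eqref{eq:construction}, the higher--order kernel identity, and the classical Taylor/H\"older bias estimate are all routine, and the bound $\prod_i|l_i|^{a_i}\precsim D(x)$ is elementary.
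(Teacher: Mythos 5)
Your proof is correct in outline but takes a genuinely different route from the paper's. You reduce $\phi_\sigma * f_\beta$ to a single convolution $K_\sigma * f_0$ against the fixed signed higher-order Gaussian kernel $K=\sum_{k=1}^{j+1}\binom{j+1}{k}(-1)^{k+1}\phi_{\sqrt k}$, verify its vanishing moments through $\widehat K(\xi)=1-(1-e^{-\xi^2/2})^{j+1}$, Taylor-expand $f_0$ itself, and then recover the stated form of $D(x)$ via Fa\`a di Bruno and a weighted AM--GM step. The paper instead follows the Kruijer et al.\ template more literally: it Taylor-expands $\log f_0$ with a H\"older remainder, brackets $e^{B^{u/l}_{f_0,r}}$ between truncated polynomials, and integrates term-by-term against the Gaussian -- explicitly for $\beta\le 2$, then appealing to the algebraic unrolling (Lemma \ref{lem:approx}) and Kruijer's induction for $\beta>2$. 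Your kernel formulation treats all $\beta$ uniformly and is conceptually cleaner; the paper's log-expansion produces $D(x)$ directly without the AM--GM maneuver. One step you should spell out: Assumption \textbf{F1} constrains $\log f_0$, not $f_0$, so the estimate $\|K_\sigma*f_0-f_0\|_\infty\precsim\|f_0\|_{C^\beta}\int|K_\sigma||t|^\beta dt$ requires the intermediate (Fa\`a di Bruno) observation that $\log f_0\in C^\beta[0,1]$ and bounded on a compact interval forces $f_0^{(r)}$ to be $(\beta-r)$-H\"older, with local seminorm controlled by $f_0$ times a polynomial in $|l_1|,\dots,|l_r|$; you gesture at this but it is where the precise combinatorics of $D(x)$ actually come from.

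On the endpoint issue you flag: you are right that it is the delicate point, but your monotone-extension comparison is not carried through and would not close the gap as written. Under \textbf{F1} $\log f_0$ is continuous on $[0,1]$, so $f_0$ is bounded away from zero there and has a genuine jump at $0$ and $1$; for $x$ within $O(\sigma\sqrt{\log(1/\sigma)})$ of an endpoint, the Gaussian mass sampled outside $[0,1]$ is not $o(\sigma^\beta)$ and the truncated convolution genuinely undershoots $f_0$, so \eqref{eq:approximation} cannot hold for the literal convolution uniformly up to the boundary. The paper handles this by declaring that $f_0$ is extended analytically beyond $[0,1]$ before the construction \eqref{eq:construction} is run, so the identity is for the extended convolution restricted to $[0,1]$. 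To make your argument complete you would need to adopt the same extension device (or an explicit smooth $C^\beta$ extension preserving the local bounds on the $l_i$), rather than the monotone-extension comparison, which does not produce a $C^\beta$ extension compatible with the Taylor step.
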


The proof can be found in the supplementary file section \ref{sec:approximation}. The ability to represent the model in terms proportional to true density plays an important role in bounding the KL-divergence between $f_{\mu,\sigma}$ and $f_0$.  

\begin{remark}
The approximation result can be extended to the isotropic $\beta$-H\"{o}lder space $\mathcal{C}^\beta[0,1]^d$ under similar regularity assumptions. The extended approximation result can be applied to more general cases.
\end{remark}

\section{Posterior inference for NL-LVM}\label{sec:theory1}

Most of the existing literature on non-parametric Bayesian approaches to the density estimation problem are centered around DP mixture priors \citep{ferguson1973bayesian, ferguson1974prior}, which are simply transformation-based models with a discrete distribution for the latent variables. On the other hand, the theoretical properties of continuous transformation-based models remain largely unknown.

In this section, we provide theoretical results for posterior inference of the transformation-based model for unconditioned density estimation in the context of NL-LVM. Our results are two-fold: (1) We first show that a large class of transfer function $\mu$ leads to $L_1$ large support of the space of densities induced by the NL-LVM; (2) We obtain the optimal frequentist rate up to a logarithmic factor under standard regularity conditions on the true density using the transformation-based approach with induced GP priors. 

\subsection{$L_1$ large support}

One can induce a prior $\Pi$ on $\mathcal{F}$ via the mapping $f_{\mu,\sigma}$ by placing independent priors
$\Pi_{\mu}$ and $\Pi_{\sigma}$ on $C[0,1]$ and $[0, \infty)$ respectively, as $\Pi = (\Pi_{\mu} \otimes \Pi_{\sigma}) \circ f^{-1}_{\mu,\sigma}$. \cite{kundu2014latent} assumes a Gaussian process prior with squared exponential covariance kernel on $\mu$ and an inverse-gamma prior on $\sigma^2$.  
Given the flexibility of $f_{\mu,\sigma}$ upon the choices of $\mu$, placing a prior on $\mu$ supported on the space of continuous functions $C[0, 1]$ without further restrictions is convenient and Theorem \ref{thm:support} assures us that this specification leads to large $L_1$ support on the space of densities.

Suppose the prior $\Pi_{\mu}$ on $\mu$ has full sup-norm support on $C[0,1]$ so that $\Pi_{\mu}(\Vert \mu - \mu^*\Vert_{\infty} < \epsilon) > 0$ for any $\epsilon > 0$ and $\mu^* \in C[0,1]$, and the prior $\Pi_{\sigma}$ on $\sigma$ has full support on $[0, \infty)$. If $f_0$ is compactly supported, so that the quantile function $\mu_0 \in C[0,1]$, then it can be shown that under mild conditions, the induced prior $\Pi$ assigns positive mass to arbitrarily small $L_1$ neighborhoods of any density $f_0$. We summarize the above discussion in the following theorem, with a proof provided in the section \ref{sec:support} of supplementary file.

\begin{theorem}\label{thm:support}
If $\Pi_{\mu}$ has full sup-norm support on $C[0,1]$ and $\Pi_{\sigma}$ has full support on $[0, \infty)$, then the $L_1$ support of the induced prior $\Pi$ on $\mathcal{F}$ contains all densities $f_0$ which have a finite first moment and are non-zero almost everywhere on their support.
\end{theorem}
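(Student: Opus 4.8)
The plan is to show that for any density $f_0$ with finite first moment that is nonzero a.e.\ on its support, and for any $\epsilon > 0$, the induced prior $\Pi$ puts positive mass on the $L_1$-ball $\{f : \|f - f_0\|_1 < \epsilon\}$. The natural route is a two-step approximation: first approximate $f_0$ in $L_1$ by a convolution $\phi_\sigma * f_0 = f_{\mu_0,\sigma}$ for a small but fixed $\sigma$, where $\mu_0 = F_0^{-1}$ is the quantile function of $f_0$; then approximate $f_{\mu_0,\sigma}$ by $f_{\mu,\sigma}$ for $\mu$ in a sup-norm neighborhood of $\mu_0$, and conclude using the full support assumptions on $\Pi_\mu$ and $\Pi_\sigma$. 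Since the theorem statement does not assume compact support (only a finite first moment), the main subtlety relative to the discussion preceding it is that $\mu_0$ need not lie in $C[0,1]$ as a bounded function — it may blow up at the endpoints — so the argument must be localized.

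Concretely, I would proceed as follows. \textbf{Step 1 (convolution approximation).} By standard properties of convolution with an approximate identity in $L_1$, $\|\phi_\sigma * f_0 - f_0\|_1 \to 0$ as $\sigma \to 0$; fix $\sigma_0$ small enough that this is $< \epsilon/3$. \textbf{Step 2 (truncation of the quantile function).} Because $\mu_0$ is only continuous on the open interval $(0,1)$, pick $\delta \in (0,1/2)$ small and define a truncated transfer function $\mu^*$ that agrees with $\mu_0$ on $[\delta, 1-\delta]$ and is extended to a bounded continuous function on all of $[0,1]$ (e.g.\ constant on $[0,\delta]$ and $[1-\delta,1]$). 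The finite first moment of $f_0$ ensures that the contribution to $f_{\mu_0,\sigma_0}$ from the tail pieces $x\in[0,\delta]\cup[1-\delta,1]$ is small, so one can choose $\delta$ with $\|f_{\mu^*,\sigma_0} - f_{\mu_0,\sigma_0}\|_1 < \epsilon/3$; here one uses that $f_{\mu,\sigma}(y) = \int_0^1 \phi_\sigma(y-\mu(x))\,dx$ and that changing $\mu$ on a set of Lebesgue measure $2\delta$ changes the integral by at most $2\delta$ at each $y$, hence by at most $4\delta$ in $L_1$. \textbf{Step 3 (sup-norm perturbation).} For $\mu$ with $\|\mu - \mu^*\|_\infty$ small, bound $\|f_{\mu,\sigma_0} - f_{\mu^*,\sigma_0}\|_1$: since $\phi_{\sigma_0}$ is Lipschitz and in $L_1$, one gets $|f_{\mu,\sigma_0}(y) - f_{\mu^*,\sigma_0}(y)| \le \int_0^1 |\phi_{\sigma_0}(y - \mu(x)) - \phi_{\sigma_0}(y - \mu^*(x))|\,dx$, and integrating in $y$ and using uniform continuity of $\phi_{\sigma_0}$ in $L_1$ shows this is $< \epsilon/3$ once $\|\mu-\mu^*\|_\infty < \rho$ for suitable $\rho = \rho(\epsilon,\sigma_0)$. \textbf{Step 4 (positivity of prior mass).} The full sup-norm support of $\Pi_\mu$ gives $\Pi_\mu(\|\mu-\mu^*\|_\infty < \rho) > 0$, and the full support of $\Pi_\sigma$ on $[0,\infty)$ gives positive mass to any neighborhood of $\sigma_0$; one also needs to absorb the small variation of $f_{\mu,\sigma}$ as $\sigma$ ranges over a small neighborhood of $\sigma_0$, which is handled the same way as Step 3 using continuity of $\sigma \mapsto \phi_\sigma$ in $L_1$. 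Combining Steps 1–4 by the triangle inequality, the event $\{\|f_{\mu,\sigma} - f_0\|_1 < \epsilon\}$ contains a product neighborhood of $(\mu^*, \sigma_0)$ of positive prior probability, so $\Pi(\|f - f_0\|_1 < \epsilon) > 0$.

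The main obstacle I anticipate is \textbf{Step 2}: making the truncation argument clean when $f_0$ has unbounded support and $\mu_0$ is unbounded. One must carefully use the finite-first-moment hypothesis to control the mass that the quantile function sends to $\pm\infty$ — intuitively, $\int_0^1 |\mu_0(x)|\,dx = \int |y| f_0(y)\,dy < \infty$, so $\mu_0 \in L_1[0,1]$, which lets one argue that truncating $\mu_0$ to a bounded function perturbs the mixing measure $\nu_{\mu_0}$ (and hence $f_{\mu_0,\sigma_0}$) only slightly in $L_1$. The ``nonzero a.e.\ on its support'' condition is what guarantees $F_0$ is strictly increasing on its support so that $\mu_0 = F_0^{-1}$ is well-defined and continuous there; this needs to be stated precisely. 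Steps 1, 3, and 4 are routine approximate-identity and dominated-convergence arguments.
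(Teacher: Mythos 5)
Your decomposition mirrors the paper's: split $\|f_0 - f_{\mu,\sigma}\|_1$ into a convolution-approximation term, a quantile-replacement term, and a sup-norm perturbation term, then invoke full support of $\Pi_\mu$ and $\Pi_\sigma$. The paper replaces $\mu_0$ by a continuous $\widetilde{\mu}_0$ via the abstract density of $C[0,1]$ in $L_1[0,1]$ (this is exactly where the finite-first-moment hypothesis enters, to put $\mu_0 \in L_1[0,1]$), and controls both replacement terms with the single estimate $\|f_{\mu_1,\sigma} - f_{\mu_2,\sigma}\|_1 \le C\|\mu_1 - \mu_2\|_1/\sigma$. Your explicit truncation of $\mu_0$ near the endpoints is a concrete alternative construction, and your $4\delta$ bound via Fubini is correct (though the intermediate pointwise claim ``changes the integral by at most $2\delta$ at each $y$'' should carry a factor $\|\phi_{\sigma_0}\|_\infty$; it is the integral in $y$ that gives the clean $4\delta$).

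The genuine gap is in Step 2. The theorem admits $f_0$ whose support is disconnected, in which case $\mu_0 = F_0^{-1}$ has \emph{interior} jump discontinuities: if $\mathrm{supp}(f_0) = [0,1]\cup[2,3]$, then $\mu_0$ jumps at $t = F_0(1)$. Truncating $\mu_0$ to be constant near $0$ and $1$ does not remove these interior jumps, so your $\mu^*$ is not in $C[0,1]$; and a sup-norm ball around a discontinuous $\mu^*$ intersected with $C[0,1]$ is empty for small enough radius, so Step 4 yields $\Pi_\mu(\|\mu - \mu^*\|_\infty < \rho) = 0$ and the argument collapses. The paper's $L_1$-density argument is immune to this because a monotone $L_1[0,1]$ function, jumps and all, is still $L_1$-approximable by continuous functions. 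Your approach can be repaired by letting $\mu^*$ agree with $\mu_0$ away from a set of small Lebesgue measure that also covers neighborhoods of the (at most countably many) jump points, to which the same $4\delta$ Fubini bound applies, but as written the endpoint-only truncation is insufficient.
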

\begin{remark}
The conditions of Theorem \ref{thm:support} are satisfied for a wide range of Gaussian process priors on $\mu$ (for example, a GP with a squared exponential or Mat\'{e}rn covariance kernel).
\end{remark}
\begin{remark}
When $f_0$ has full support on $\mathbb{R}$, the quantile function $\mu_0$ is unbounded near $0$ and $1$, so that $\Vert\mu_0\Vert_{\infty} = \infty$. However, $\int_{0}^{1} \abs{\mu_0(t)} dt = \int_{\mathbb{R}} \abs{x} f_0(x) dx$, which implies that $\mu_0$ can be identified as an element of $L_1[0,1]$ if $f_0$ has finite first moment. Since $C[0,1]$ is dense in $L_1[0,1]$, the previous conclusion regarding $L_1$ support can be shown to hold in the non-compact case too.
\end{remark}

\subsection{Posterior contraction results} 
Gaussian process priors have been widely used in non-parametric Bayesian inference as well as machine learning due to their modeling advantages and proper theoretical grounding \citep{van2007bayesian,van2008rates,van2009adaptive}. Considering a Gaussian process as the transformation mapping over the latent variable, the transformation-based model essentially aligns with a Gaussian process latent variable model (GP-LVM) \citep{lawrence2004gaussian,lawrence2005probabilistic,lawrence2007hierarchical,ferris2007wifi}. 
Theoretical work of GP-LVM such as \cite{kundu2014latent} showed a KL large support of the induced prior process, and also showed the posterior consistency to the true density function. However a straightforward description of the space of densities induced by the proposed model is not clear and the posterior contraction rate of the proposed model for finite data is still unknown. 
 
We now present the posterior contraction result for transformation-based model with NL-LVM. To that end, we first review its definition.  Given independent and identically distributed observations $Y^{(n)}=(Y_1, \ldots, Y_n)$ from a true density $f_0$,  a posterior $\Pi_n$ associated with a prior $\Pi$ on $\mathcal{F}$ is said to contract at a rate $\epsilon_n$, if for a distance metric $d_n$ on $\mathcal{F}$, 
\begin{eqnarray}\label{eq:postcon}
\bbE_{f_0}\Pi_n\{ d_n(f, f_0) > M \epsilon_n \mid Y^{(n)}\} \to 0
\end{eqnarray}
for a suitably large integer $M > 0$. Unlike the treatment in discrete mixture models \citep{ghosal2007posterior} where a compactly supported density is approximated with a discrete mixture of normals, the main idea is to first approximate the true density $f_0$ by a proper convolution with $f_\beta$ defined as in \eqref{eq:construction},  
 then allow the GP prior on the transfer function to appropriately concentrate around $\mu_\beta$, the inverse c.d.f. function of the defined $f_\beta$. We first state our choices for the prior distributions $\Pi_{\mu}$ and $\Pi_{\sigma}$.

\textbf{Assumption P1} We assume $\mu$ follows a centered and rescaled Gaussian process denoted by $\mbox{GP}(0, c^A)$, where $A$ denotes the rescaled parameter, and assume $A$ has density $g$ satisfying for $a>0$,
\begin{align*}
C_1a^p\exp{(-D_1a \log^q a)} \le g(a) \le C_2a^p\exp{(-D_2a \log^q a)}.
\end{align*}
\textbf{Assumption P2} We assume $\sigma \sim \mbox{IG}(a_{\sigma}, b_{\sigma})$.
Note that contrary to the usual conjugate choice of an inverse-gamma prior for $\sigma^2$, we have assumed an inverse-gamma prior for $\sigma$.  This enables one to have slightly more prior mass near zero compared to an inverse-gamma prior for $\sigma^2$, leading to the optimal rate of posterior convergence. Refer also to \cite{kruijer2010adaptive} for a similar prior choice for the bandwidth of the kernel in discrete location-scale mixture priors for densities.\\
 \begin{theorem}\label{thm:compact}
If $f_0$ satisfies Assumptions \textbf{F1} and \textbf{F2} and the priors $\Pi_\mu$ and $\Pi_{\sigma}$ are as in
Assumptions \textbf{P1} and \textbf{P2} respectively, the best obtainable rate of posterior convergence relative to Hellinger metric $h$ is
\begin{eqnarray}\label{eq:optrate}
\epsilon_{n} = n^{-\frac{\beta}{2\beta+1}}(\log n)^{t},
\end{eqnarray}
where $t=\beta(2\vee q)/(2\beta +1) +1$.
\end{theorem}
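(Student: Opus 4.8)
The plan is to verify the three standard sufficient conditions for posterior contraction at rate $\epsilon_n$ in the Hellinger metric, following the general framework of \cite{ghosal2007posterior}: positive prior mass on a Kullback--Leibler/$V$-divergence neighbourhood of $f_0$, an $\epsilon_n$-entropy bound on a sieve $\mathcal{F}_n\subset\mathcal{F}$, and exponentially small prior mass on $\mathcal{F}_n^c$. Throughout I set $\sigma_n\asymp\epsilon_n^{1/\beta}$ (up to logarithmic factors), which is the bandwidth at which Proposition \ref{approximation} makes the convolution error $O(\sigma_n^\beta)$ comparable to $\epsilon_n$, and I let $\mu_\beta=F_\beta^{-1}$ be the quantile function of the density $f_\beta$ produced by the iteration \eqref{eq:construction}, so that $f_{\mu_\beta,\sigma}=\phi_\sigma*f_\beta$.

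\textbf{Prior mass (the KL condition).} First I would show $D(f_0\|f_{\mu_\beta,\sigma})\lesssim\sigma^{2\beta}$ and $V(f_0\|f_{\mu_\beta,\sigma})\lesssim\sigma^{2\beta}$ for $\sigma\le\sigma_n$: by Proposition \ref{approximation}, $f_{\mu_\beta,\sigma}=f_0\,(1+D(\cdot)\,O(\sigma^\beta))$, and Assumption \textbf{F1} makes the derivatives $l_j$ of $\log f_0$ bounded on $[0,1]$, so $\int f_0 D<\infty$ and $\log(f_0/f_{\mu_\beta,\sigma})$ is uniformly $O(\sigma^\beta)$, while Assumption \textbf{F2} supplies the lower bound $f_{\mu_\beta,\sigma}\gtrsim f_0$ needed to control the $V$-divergence. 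Next I would transfer a sup-norm perturbation of the transfer function into a perturbation of the density: since $\sup_t|\phi_\sigma'(t)|\lesssim\sigma^{-2}$, one has $\|f_{\mu,\sigma}-f_{\mu_\beta,\sigma}\|_\infty\lesssim\sigma^{-2}\|\mu-\mu_\beta\|_\infty$, so it is enough that $\|\mu-\mu_\beta\|_\infty\lesssim\sigma_n^2\epsilon_n$ together with $\sigma$ in a short interval around $\sigma_n$. The probability of the first event is bounded below by the small-ball estimate for the rescaled process $\mathrm{GP}(0,c^A)$ from \cite{van2009adaptive}, whose dependence on the tail of the density of $A$ produces the factor $(2\vee q)$; the probability $\Pi_\sigma(\sigma\in[\sigma_n,2\sigma_n])$ under the $\mathrm{IG}(a_\sigma,b_\sigma)$ prior is at least $\exp(-c/\sigma_n)=\exp(-o(n\epsilon_n^2))$ for the stated $\epsilon_n$. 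Multiplying, the KL/$V$ neighbourhood receives mass $\ge\exp(-c\,n\epsilon_n^2)$, and the exponent $t=\beta(2\vee q)/(2\beta+1)+1$ is exactly what balances the GP concentration function against $\sigma_n^{-1}$ and the logarithmic loss inherent in adaptation.

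\textbf{Entropy and sieve complement.} I would take $\mathcal{F}_n=\{f_{\mu,\sigma}:\mu\in\mathcal{B}_n,\ \underline\sigma_n\le\sigma\le\overline\sigma_n\}$, where $\mathcal{B}_n$ is the usual Gaussian-process sieve (a union over rescaling levels $a\le a_n$ of a multiple of the unit RKHS ball inflated by a small sup-norm ball), with $a_n,\underline\sigma_n,\overline\sigma_n$ chosen as in \cite{van2009adaptive}. A Lipschitz-type bound $\|f_{\mu_1,\sigma_1}-f_{\mu_2,\sigma_2}\|_1\lesssim\underline\sigma_n^{-2}\|\mu_1-\mu_2\|_\infty+\underline\sigma_n^{-1}|\sigma_1-\sigma_2|$ shows that a sup-norm cover of $\mathcal{B}_n$ together with a grid on $[\underline\sigma_n,\overline\sigma_n]$ yields a Hellinger cover of $\mathcal{F}_n$, so the metric-entropy estimate for GP sieves gives $\log N(\epsilon_n,\mathcal{F}_n,h)\lesssim n\epsilon_n^2$. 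For the complement, $\Pi(\mathcal{F}_n^c)$ is bounded by Borell's inequality applied to $\mathrm{GP}(0,c^a)$ and integrated against the density of $A$ over $a>a_n$, plus the $\mathrm{IG}$ tail probabilities $\Pi_\sigma(\sigma>\overline\sigma_n)$ and $\Pi_\sigma(\sigma<\underline\sigma_n)$; all three are $\le\exp(-c'\,n\epsilon_n^2)$ for suitable constants. Feeding the three estimates into the general contraction theorem yields \eqref{eq:postcon} with $d_n=h$ and the stated $\epsilon_n$.

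\textbf{Main obstacle.} The delicate step is the prior-mass/approximation argument, specifically the behaviour of $\mu_\beta=F_\beta^{-1}$ near the endpoints of $\mathrm{supp}(f_0)$: Assumption \textbf{F2} permits $f_0$ (hence $f_\beta$, for small $\sigma$) to vanish at $0$ and $1$, in which case $\mu_\beta$ has unbounded derivative there and is not Lipschitz, so the Gaussian-process concentration function around $\mu_\beta$ need not exhibit the clean polynomial-in-$\epsilon_n$ behaviour one wants. Handling this requires either working with a regularized/truncated version of $\mu_\beta$ (matching $f_\beta$ in the interior and extrapolated near the endpoints, with the extrapolation error absorbed into the $O(\sigma_n^\beta)$ budget) or exploiting the monotonicity in \textbf{F2} to bound the endpoint decay of $f_0$; one must simultaneously check that $f_\beta$ from \eqref{eq:construction} is a legitimate probability density so the quantile transform is well defined. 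A secondary nuisance is propagating a uniform lower bound on $f_{\mu,\sigma}$ through the perturbation so that the $D$- and $V$-divergences between $f_0$ and a generic $f_{\mu,\sigma}$ in the small ball—not merely the Hellinger distance—are controlled.
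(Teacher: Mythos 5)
The overall architecture of your argument matches the paper's: verify prior thickness, an entropy bound on a sieve, and a sieve-complement bound, with the approximation provided by $\phi_\sigma * f_\beta$ from Proposition \ref{approximation}, and your entropy and sieve-complement discussions are essentially the same as theirs (they use the sieve $B_n = \cup_{a<r_n}(M_n\mathbb{H}_1^a)+\bar\delta_n\mathbb{B}_1$ from \cite{van2009adaptive}, an $L_1$-Lipschitz bound on $f_{\mu,\sigma}$ in $(\mu,\sigma)$, and inverse-gamma tails). Your flags about $\mu_\beta$ near the endpoints and about checking that $f_\beta$ is a genuine density are on target; the paper handles the latter in Lemma \ref{lem:density}. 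However, there is a genuine gap in your prior-mass step that would prevent you from obtaining the stated rate.

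You propagate a sup-norm perturbation of the transfer function into the density via $\|f_{\mu,\sigma}-f_{\mu_\beta,\sigma}\|_\infty \lesssim \sigma^{-2}\|\mu-\mu_\beta\|_\infty$, a bound \emph{linear} in $\|\mu-\mu_\beta\|_\infty$, and accordingly require $\|\mu-\mu_\beta\|_\infty \lesssim \sigma_n^2\epsilon_n \asymp \sigma_n^{\beta+2}$. The paper instead proves the sharp \emph{quadratic} estimate $h^2(f_{\mu,\sigma},f_{\mu_\beta,\sigma}) \precsim \|\mu-\mu_\beta\|_\infty^2/\sigma^2$ (Lemma \ref{lem:hellinger}, via H\"older's inequality inside the mixing integral and the explicit Hellinger affinity between two Gaussians), combined with $\log\|f_0/f_{\mu,\sigma}\|_\infty \precsim \|\mu-\mu_0\|_\infty^2/\sigma^2$ (Lemma \ref{lem:logsup}) and Lemma~8 of \cite{ghosal2007posterior} to convert Hellinger into KL/$V$. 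The quadratic bound lets them get away with the larger ball $\|\mu-\mu_\beta\|_\infty \precsim \sigma_n^{\beta+1}$, and this is decisive: since $\mu_\beta\in C^{\beta+1}[0,1]$, the GP small-ball exponent of \cite{van2009adaptive} at radius $\delta_n$ scales as $(1/\delta_n)^{1/(\beta+1)}$; with $\delta_n=\sigma_n^{\beta+1}$ this is $\sigma_n^{-1}$, which balances against $n\widetilde\epsilon_n^2 \asymp n\sigma_n^{2\beta}$ exactly at $\sigma_n \asymp n^{-1/(2\beta+1)}$, giving the optimal rate. With your $\delta_n \asymp \sigma_n^{\beta+2}$ the exponent becomes $\sigma_n^{-(\beta+2)/(\beta+1)}$, and at $\sigma_n \asymp n^{-1/(2\beta+1)}$ this equals $n^{(\beta+2)/[(\beta+1)(2\beta+1)]}$, which exceeds $n\widetilde\epsilon_n^2 \asymp n^{1/(2\beta+1)}$ for every $\beta>0$ since $(\beta+2)/(\beta+1) > 1$. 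So your small-ball mass is not $\gtrsim e^{-Cn\widetilde\epsilon_n^2}$, and re-balancing only yields the slower rate $n^{-\beta/[2\beta+1+1/(\beta+1)]}$ up to logs. This is exactly the pitfall the paper flags in the remark after Lemma \ref{lem:hellinger}: the "cruder" bound linear in $\|\mu_1-\mu_2\|_\infty$ "does not suffice for obtaining the optimal rate of convergence." The essential missing ingredient in your proposal is the sharpened Hellinger perturbation estimate.
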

We provide a sketch of the proof below, the full proof is deferred to the supplementary file section \ref{sec:compact}. It suffices to check sufficient conditions (prior thickness, sieve construction, entropy condition) for posterior contraction result in \cite{ghosal2000convergence}. We first verify the prior thickness. From Lemma 8 of \cite{ghosal2007posterior}, one has
$$
\int f_0 \log \bigg(\frac{f_0}{f_{\mu, \sigma}}\bigg)^{i} \leq h^2(f_0, f_{\mu,\sigma})\bigg(1 + \log \bigg\|\frac{f_0}{f_{\mu, \sigma}}\bigg\|_{\infty}\bigg)^{i},
$$
for $i=1,2$. By Lemma \ref{lem:logsup}, we have $ \log \|f_0/f_{\mu, \sigma}\|_{\infty} \le \Vert \mu -\mu_\beta\Vert_{\infty}/\sigma^2$, and by Lemma \ref{lem:hellinger} and Lemma \ref{lem:KL}, we bound $h^2(f_0, f_{\mu,\sigma})\precsim \Vert \mu-\mu_\beta\Vert_\infty/\sigma^2 + O(\sigma^{2\beta})$. Then we have 
\begin{align*}
\big\{ \sigma \in [\sigma_n, 2\sigma_n], &\Vert \mu - \mu_\beta\Vert_{\infty} \precsim \sigma_n^{\beta+1} \big\} \subset  \{D(f_0 ||f_{\mu, \sigma}) \precsim \sigma_n^{2\beta}, V(f_0 ||f_{\mu, \sigma})\precsim \sigma_n^{2\beta}\}.
\end{align*}
Under assumptions \textbf{P1} and \textbf{P2} the prior thickness is  guarantee by upper bounding $\Pi\big\{ \sigma \in [\sigma_n, 2\sigma_n], \Vert \mu - \mu_\beta\Vert_{\infty} \precsim \sigma_n^{\beta+1} \big\}$. We construct the sieve
\begin{eqnarray*}
\mathcal{F}_n = \{f_{\mu, \sigma}: \mu \in B_n, l_n < \sigma < h_n \}.
\end{eqnarray*}
where $B_n$ denotes the sieve for a GP prior on $\mu$ as defined in \cite{van2009adaptive}. Further we calculate the entropy $N(\bar{\epsilon}_n, \mathcal{F}_n, \norm{\cdot}_1)$ by observing that for $\sigma_2 >\sigma_1 > \sigma_2/2$, 
\small
\begin{align*}
    \norm{f_{\mu_1, \sigma_1} - f_{\mu_2, \sigma_2}}_1 \leq \bigg(\frac{2}{\pi}\bigg)^{1/2}\frac{\Vert\mu_1 - \mu_2\Vert_{\infty}}{\sigma_1} + \frac{3(\sigma_2 - \sigma_1)}{\sigma_1}.
\end{align*}
\normalsize
The entropy bound is obtained applying Lemma \ref{lem:entropy}. Finally, the sieve compliment condition is easily verified by combining the results on GP priors in \cite{van2009adaptive} and tail properties of inverse-gamma distribution of $\sigma$.

%

\section{Gaussian Process Implicit Variational Inference}\label{sec:theory2}
Motivated by the flexibility we have demonstrated for transformation-based models in the non-parametric setting, we construct a flexible implicit variational family of distributions, deemed Gaussian process implicit variational inference (GP-IVI). We provide sufficient conditions under which GP-IVI achieves optimal risk bounds and approximates the true posterior in the sense of the Kullback--Leibler divergence. We begin by defining common terminology used throughout the section and defining GP-IVI.
\subsection{Preliminaries}
We consider IID observations $Y_i \in \mathbb{R}^p$, 
for $i=1,\ldots,n$. Let $P_\theta^{(n)}$ be the distribution of the observations with parameter $\theta\in \Theta \subset \mathbb{R}^d$ that admits a density $p_\theta^{(n)}$ relative to the Lebesgue measure. Let $P_\theta$ denote the prior distribution of $\theta$ that admits a density $p_\theta$ over $\Theta$. With a slight abuse of notation, we will use $p(Y^{(n)}\mid \theta)$ to denote $\bbP_{\theta}^{(n)}$ and its density function. We adopt a frequentist framework and assume a true data generating distribution $\bbP_{\theta^*}^{(n)}$ and a true parameter $\theta^*$. Denote the negative log prior $U(\theta)=-\log p_\theta(\theta)$ and the log-likelihood ratio of $Y_i$, for $i=1,\ldots,n$, by 
\begin{align}
    \ell_i(\theta,\theta^*)=\log[ p(Y_i\mid \theta) \slash p(Y_i\mid \theta^*)]. 
\end{align}
We denote the first two moments of the log-likelihood by 
\small
\begin{align}
    D(\theta^*||\theta)=-E_{\theta^*}^{(n)}[\ell_1(\theta,\theta^*)],\,    \mu_2(\theta^*||\theta)=E_{\theta^*}^{(n)}[\ell_1(\theta,\theta^*)^2]. 
\end{align} 
\normalsize
Lastly denote the appropriate neighborhood around the true parameter $\theta^*$, 
\small
\begin{align}
    B_n(\theta^*,\varepsilon) = \{ \theta \mid  D[p(Y^{(n)}\mid \theta^*)\| p(Y^{(n)}\mid \theta)]\leq n\varepsilon^2, V[p(Y^{(n)}\mid \theta^*) \| p(Y^{(n)}\mid \theta)]\leq n\varepsilon^2\}. \label{kl.ball}
\end{align}
\normalsize
%
\subsection{Gaussian Process Implicit Variational Inference}

Using the NL-LVM model, we can define the  variational family of $\theta$ conditioned on the latent variable $\eta$, with parameters $\mu\in C[0,1]$ and $ \sigma\in (0,\infty)$,
\begin{align*}
    q_{\mu,\sigma}(\theta_i \mid \eta_i) &= \phi_\sigma(\theta_i-\mu(\eta_i)) \\
    \eta_i &\sim U(0,1),\, i=1,\ldots, d.
\end{align*}
Marginalizing over the latent $\eta$ gives us the implict variational distribution,
\begin{align*}
   q_{\mu,\sigma}(\theta) = \int_0^1 \phi_\sigma( \theta -\mu(\eta) ) d\eta.
\end{align*}
 Together this defines the Gaussian process implict variational inference (GP-IVI) family,
 \small
\begin{align*}
    \m Q_{GP}=\left\{q_{\mu,\sigma}(\theta)=\int_0^1 \phi_\sigma( \theta -\mu(\eta) ) d\eta\mid \mu \in C[0,1],\, \sigma > 0 \right\}. 
\end{align*}
\normalsize

\subsection{Approximation Quality of GP-IVI}

In this section, we show that KL divergence between the true posterior and its optimal GP-IVI approximation   is $O_p(1)$. Using a simple example, we show that without further assumptions this bound cannot be improved. We begin the section with said example. 

Consider the following one-dimensional Gaussian-Gaussian Bayesian model for inference of an unknown true mean $\theta^*$ using the model
\begin{align*}
    Y_1,\ldots, Y_n \sim N(\theta,\sigma^2), \quad  \theta \sim N(\mu_0,\sigma_0^2)
\end{align*}
in which $\mu_0,\sigma_0,\sigma$ are all known. Let $\overline{Y}_n, \mu_n, \sigma_n^2$ denote the sample mean, the posterior mean, and variance, respectively. Straight forward calculations show 
\begin{align*}
    &D\left[ N(\theta^*, n^{-1}\sigma^2) || N(\mu_n, \sigma^2_n)\right]\to \chi^2_1,\,\text{weakly}. 
\end{align*}
Even in the simple case of a normal-normal model, we see that the KL divergence between the true data generating distribution and the true posterior does not converge weakly to 0 but instead converges weakly to a stochastically bounded random variable.   

The $O_p(1)$ bound is achieved over a rather small subfamily of GP-IVI. Define the restricted Gaussian family
\begin{align*}
    \Gamma_n=\{N(\mu,\tau^2 I_d)\mid \Vert \mu \Vert_2 \leq M,\, 0\leq \sigma_n\leq \tau \leq c_0^{1\slash 2}\sigma_n\},
\end{align*}
and let $\mu_f$ denote the quantile function corresponding to $f\in \Gamma_n$. We define the corresponding small bandwidth convolution Gaussian (variational) family
\small
\begin{align*}
    \m Q_n=\left \{ q_{\mu,\sigma}(\theta) \mid q_{\mu,\sigma}(\theta)=\int_0^1 \phi_\sigma(\theta-\mu_f(\eta))d\eta,\quad f\in \Gamma_n \right\}. 
\end{align*}
\normalsize
The following assumptions are required to show the $O_p(1)$ bound for the KL-divergence. \\
\\
\textbf{Assumption B1}  The true parameter $\theta^*$ satisfies $\Vert \theta^*\Vert_2\leq M$.\\
\\
\textbf{Assumption B2}  The variance bound $\sigma_n$ satisfies $0\leq \sigma_n\leq n^{-1\slash 2} \leq c_0^{1\slash 2}\sigma_n$, for all $n\geq 1$.
\\
\\
\textbf{Assumption B3}  The quantities $D(\theta^*||\theta)$ and $\mu_2(\theta^*||\theta)$ are finite for all $\theta\in \mb R^d$. 
\\
\\
\textbf{Assumption B4}  The matrices of the second derivatives, $D^{(2)}(\theta^*||\theta)$, $\mu_2^{(2)}(\theta^*||\theta)$, $U^{(2)}(\theta)$ exist on $\mathbb{ R}^d$ and satisfy for any $\theta, \theta'\in \mathbb{ R}^d$,
\begin{align*}
    s_{max}\left( D^{(2)}(\theta^*||\theta)- D^{(2)}(\theta^*||\theta')\right) \leq C\Vert \theta-\theta'\Vert_2^{\alpha_1},\\
    s_{max}\left( \mu_2^{(2)}(\theta^*||\theta)- \mu_2^{(2)}(\theta^*||\theta')\right) \leq C\Vert \theta-\theta'\Vert_2^{\alpha_2},\\
    s_{max}\left( U^{(2)}(\theta)- U^{(2)}(\theta')\right) \leq C\Vert \theta-\theta'\Vert_2^{\alpha_3},
\end{align*}
for some $\alpha_1,\alpha_2,\alpha_3 > 0$. Here $s_{max}$ denotes the maximum eigenvalue of the matrix. 
\\
\\
\textbf{Assumption B5}  $D(\theta^*||\theta)\geq C \Vert \theta-\theta^*\Vert_2$. 

Assumption {\bf B1} is needed so that a normal distribution centered at the true parameter is contained in $\Gamma_n$. Assumptions {\bf B2-B4} are technical assumptions needed in order to achieve convergence of certain bounds used in the proof. Assumption {\bf B5} is a standard identifiability condition. 
\begin{theorem}\label{thm:vi_bound}
Under assumptions \textbf{B1} through \textbf{B5} it holds that $m^*_n(\m Q_n)=\min_{q\in \m Q_n} \left\{D[q||p(\cdot \mid Y^n)]\right\}$ is bounded in probability with respect to the data generating distribution $\bbP_{\theta^*}^{(n)}$. Formally, given any $\varepsilon>0$, there exists $M_\varepsilon, N_\varepsilon >0$ such that for $n\geq N_\varepsilon$, we have $ \bbP_{\theta^*}^{(n)}(m^*_n(\m Q_n)> M_\varepsilon) \leq \varepsilon$.

\end{theorem}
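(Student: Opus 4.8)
The plan is to bound $m^*_n(\m Q_n)$ from above by a single well-chosen member of $\m Q_n$, since $0 \le m^*_n(\m Q_n) \le D[q \,\|\, p(\cdot\mid Y^n)]$ for every $q\in\m Q_n$. The structural point enabling this is that if $f = N(\theta_0,\tau_0^2 I_d)\in\Gamma_n$ and $\mu_f$ is the associated map (so $\mu_f(\eta)$ has law $f$ when $\eta\sim\mathrm{U}(0,1)$), then the corresponding element of $\m Q_n$ is the convolution $\phi_\sigma*f = N(\theta_0,(\tau_0^2+\sigma^2)I_d)$. Taking $\theta_0=\theta^*$ (legitimate by \textbf{B1}) and $\tau_0,\sigma$ both of order $\sigma_n$ gives $q^* := N(\theta^*, v_n I_d)\in\m Q_n$ with $v_n\asymp n^{-1}$ by \textbf{B2}. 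It therefore suffices to show $D[q^*\,\|\,p(\cdot\mid Y^n)] = O_p(1)$ under $\bbP_{\theta^*}^{(n)}$, which then immediately yields the claimed $(\varepsilon,M_\varepsilon,N_\varepsilon)$ statement.

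Writing $L_n(\theta) = \sum_{i=1}^n \ell_i(\theta,\theta^*)$ and $Z_n = \int_{\mathbb{R}^d} e^{L_n(\theta)}p_\theta(\theta)\,d\theta$ (the evidence normalized by $p(Y^n\mid\theta^*)$), Bayes' rule yields the exact decomposition
\begin{align*}
D[q^*\,\|\,p(\cdot\mid Y^n)] = -H(q^*) + \bbE_{q^*}[U(\theta)] - \bbE_{q^*}[L_n(\theta)] + \log Z_n,
\end{align*}
where $H(q^*)=\tfrac d2\log(2\pi e\,v_n)$ is the differential entropy, so $-H(q^*) = \tfrac d2\log n + O(1)$. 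I would control the remaining terms as follows. For $\bbE_{q^*}[U(\theta)]$: a second-order Taylor expansion about $\theta^*$ using the Hölder control of $U^{(2)}$ in \textbf{B4} (which also forces at most polynomial growth of $U$, so the shrinking-variance Gaussian tails of $q^*$ are harmless) gives $\bbE_{q^*}[U(\theta)] = U(\theta^*) + O(v_n) = O(1)$. For $-\bbE_{q^*}[L_n(\theta)]$: write $L_n(\theta) = -nD(\theta^*||\theta) + W_n(\theta)$ with $W_n(\theta)=\sum_i\{\ell_i(\theta,\theta^*)+D(\theta^*||\theta)\}$ mean zero under $\bbP_{\theta^*}^{(n)}$. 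Since $\theta^*$ minimizes $D(\theta^*||\cdot)$ its gradient vanishes there, so Taylor expansion with \textbf{B4} gives $n\,\bbE_{q^*}[D(\theta^*||\theta)] = \tfrac{nv_n}{2}\tr D^{(2)}(\theta^*||\theta^*) + o(1) = O(1)$; and since $\mu_2(\theta^*||\cdot)$ also vanishes with its gradient at $\theta^*$, a Jensen–Fubini estimate gives $\bbE_{\theta^*}^{(n)}\{(\bbE_{q^*}W_n(\theta))^2\}\le n\,\bbE_{q^*}[\mu_2(\theta^*||\theta)] = n\cdot O(v_n) = O(1)$, so $\bbE_{q^*}[W_n(\theta)] = O_p(1)$. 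Altogether $-\bbE_{q^*}[L_n(\theta)] = O_p(1)$.

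The crux is the matching upper bound $\log Z_n \le -\tfrac d2\log n + O_p(1)$ --- only the upper bound is needed, since $D[q^*\,\|\,\cdot]\ge 0$ supplies the lower side. I would prove it by a Laplace-type argument, splitting $Z_n = \int_{\{\Vert\theta-\theta^*\Vert_2\le\delta\}} + \int_{\{\Vert\theta-\theta^*\Vert_2>\delta\}}$. On the local ball, a quadratic expansion of $L_n$ about $\theta^*$ --- using that $L_n^{(2)}(\theta)$ has mean $-nD^{(2)}(\theta^*||\theta)\preceq -\tfrac n2 D^{(2)}(\theta^*||\theta^*)\prec 0$ near $\theta^*$ by \textbf{B4}, with fluctuation of order $\sqrt n$ controlled uniformly on the ball via a finite $\epsilon$-net, together with $\nabla L_n(\theta^*)=O_p(\sqrt n)$ --- yields $L_n(\theta)\le O_p(1) - c n\Vert\theta-\theta^*\Vert_2^2$, so the local integral is at most $e^{O_p(1)}(\sup_\theta p_\theta)(cn)^{-d/2}\pi^{d/2} = O_p(n^{-d/2})$. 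On the complement, \textbf{B5} gives $\bbE_{\theta^*}^{(n)}[L_n(\theta)]=-nD(\theta^*||\theta)\le -Cn\delta$ uniformly, and a standard "exponentially powerful test" argument based on \textbf{B5} and the finite-moment condition \textbf{B3} makes $\int_{\{\Vert\theta-\theta^*\Vert_2>\delta\}}e^{L_n}p_\theta$ negligible relative to $n^{-d/2}$. Substituting the four estimates, the $\pm\tfrac d2\log n$ terms cancel and $D[q^*\,\|\,p(\cdot\mid Y^n)] = O_p(1)$.

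I expect the main obstacle to be the tail estimate on $\int_{\{\Vert\theta-\theta^*\Vert_2>\delta\}}e^{L_n}p_\theta$: because $\Theta=\mathbb{R}^d$ is unbounded, controlling the empirical fluctuation $L_n(\theta)+nD(\theta^*||\theta)$ uniformly over the non-compact complement --- the classical "remaining mass / existence of tests" ingredient of Bayesian asymptotics --- must be squeezed out of the identifiability bound \textbf{B5} and the moment condition \textbf{B3}, likely after a further dyadic decomposition of the complement into an annulus and a far tail. A secondary nuisance is the uniform-in-$\theta$ concentration of the random Hessian $L_n^{(2)}$ on the local ball, which relies on the equicontinuity afforded by \textbf{B4}.
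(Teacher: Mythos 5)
Your choice of trial density $q^*=N(\theta^*,v_n I_d)$ and the four-term decomposition of $D[q^*\,\|\,p(\cdot\mid Y^{(n)})]$ are exactly what the paper does. The difference is in how the entropy term and the log-evidence term are treated, and here your version is actually more careful than the paper's. The paper asserts that $\bbE_n[\log q_n]$ ``is a constant, hence $O_p(1)$'' and handles $\log m(Y^{(n)})=\log Z_n$ by a one-sided Markov bound using $\bbE_{\theta^*}^{(n)}[m(Y^{(n)})]=1$. But for the Chebyshev step on $\bbE_n[L_n(\theta,\theta^*)]$ to close (the final display requires $n(\sigma^2+\sigma_n^2)\le 2$ so that $-C_u(1+\delta)n(\sigma^2+\sigma_n^2)\ge -2C_u(1+\delta)$), one is forced to take $\sigma\lesssim n^{-1/2}$, i.e.\ $v_n=\sigma^2+\sigma_n^2\asymp n^{-1}$. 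In that regime $\bbE_n[\log q_n]=-\tfrac d2\log\{2\pi e\,v_n\}=\tfrac d2\log n+O(1)$, which diverges, so it is not $O_p(1)$; and the Markov argument controls $\log m(Y^{(n)})$ only from above by $O_p(1)$, whereas what is actually needed is the matching lower-order behaviour $\log m(Y^{(n)})= -\tfrac d2\log n+O_p(1)$ to cancel the entropy. You correctly spotted that this cancellation is unavoidable (and it is exactly what makes the paper's own Gaussian--Gaussian example converge to $\chi^2_1$ rather than diverge), and you supply the missing Laplace-type estimate $\log Z_n\le -\tfrac d2\log n+O_p(1)$ in its place. So your route is a genuine repair of, not merely an alternative to, the paper's argument.

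One caveat, which you yourself raise: carrying out the Laplace upper bound on $\log Z_n$ --- uniform concentration of the random Hessian on a local ball and an exponentially powerful tail/testing bound on $\{\Vert\theta-\theta^*\Vert_2>\delta\}$ --- likely needs more than \textbf{B3}--\textbf{B5} deliver as literally stated (\textbf{B3} gives only pointwise finiteness of $D$ and $\mu_2$, and \textbf{B4} controls population Hessians, not the empirical one). This is not a defect of your outline relative to the paper's proof, since the paper's proof sidesteps the issue by incorrectly treating the entropy and evidence terms as separately bounded; but it does mean the theorem as stated probably requires an additional regularity/testing hypothesis (or a remark restricting to settings where the standard parametric Laplace approximation holds) for either argument to be complete.
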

Again, we provide a sketch of the proof below and provide a full proof in section \ref{sec:vi_bound} of the supplementary file. Under assumptions \textbf{B1-B2}, $q_n(\theta)=N(\theta; \theta^*,\sigma^2+\sigma_n^2)$ belongs to $\m Q_n$. By definition, $m_n^*(\m Q_n)\leq D[q_n||p(\cdot\mid Y^{(n)})]$. We show $D[q_n||p(\cdot\mid Y^{(n)})]$ is $O_p(1)$ by showing that it is a sum of $O_p(1)$ terms. Letting $\bbE_n$ denote the expectation with respect to $q_n$,  $D[q_n||p(\cdot\mid Y^{(n)})]$ can be broken into four parts $\bbE_n[\log q_n]$, $\log m(Y^{(n)})$, $\bbE_n[U(\theta)]$, and $\bbE_n\left [\sum_{i=1}^n\ell_i(\theta,\theta^*)\right]$. The first term $\bbE_n[\log q_n]$ is a constant, hence $O_p(1)$. Noting $E^{(n)}_{\theta^*}[ m(Y^{(n)})]=1$, an application of Markov's inequality shows that $\log m(Y^{(n)})$ is $O_p(1)$. Taking a (multivariate) Taylor expansion of the functions  $U(\theta)$, $D(\theta^*||\theta)$, and $\mu_2(\theta^*||\theta)$ about $\theta^*$ and applying assumption \textbf{B4} and \textbf{B5} gives us the bounds 
\begin{align}
C_\ell(\sigma^2+\sigma_n^2)\leq \bbE_n[D(\theta^*||\theta)]\leq C_u(\sigma^2+\sigma_n^2), \nonumber \\
\bbE_n[\mu_2(\theta^*||\theta)]\leq C_2(\sigma^2+\sigma_n^2), \label{thm41.ub}\\
\bbE_n[U(\theta)]\leq C_1(\sigma^2+\sigma_n^2). \nonumber 
\end{align}
Markov's inequality shows that $U(\theta)$ is $O_p(1)$. It remains to show $\bbE_n\left [\sum_{i=1}^n\ell_i(\theta,\theta^*)\right]$ is $O_p(1)$. Given $\varepsilon >0$, choose $\delta= \left [C_2c_0\slash (\varepsilon C_\ell)^2 \right]^{1\slash 2}$. Applying Chebychev's and Jensen's inequalities together with (\ref{thm41.ub}) we have,
\small
\begin{align*}
    \mb P_{\theta^*}^{(n)}\left\{E_0\left[\sum_{i=1}^n\ell_i(\theta,\theta^*)\right ]  \leq -C_u(1+\delta)n(\sigma^2+ \sigma_n^2) \right\}
\leq \frac{\bbE_n[ \mu_2(\theta^*||\theta)] }{\delta^2 n \left(\bbE_n[ D(\theta^*||\theta)] \right)^2} \leq \frac{C_2}{ C_\ell \delta^2n\sigma_n^2 }.
\end{align*}
\normalsize
Finally by assumption \textbf{B2} we have $c_0 n\leq \sigma_n^{-2}$. Thus
\small
\begin{align*}
    \mb P_{\theta^*}^{(n)}\Bigg\{E_0\left[\sum_{i=1}^n\ell_i(\theta,\theta^*)\right]  \leq -2C_u\left(1+\left [C_2c_0\slash (\varepsilon C_\ell)^2 \right]^{1\slash 2}\right) \Bigg\}
    \leq \varepsilon,
\end{align*}
\normalsize
which shows $\bbE_n\left[\sum_{i=1}^n\ell_i(\theta,\theta^*)\right]$ is $O_p(1)$. Combining the four bounds completes the proof.

\subsection{$\alpha$-Variational Bayes Risk Bound for GP-IVI}
In developing risk bounds for parameter estimation, we use a slight variation of the standard variational objective function for technical simplicity.  
$\alpha$-variational Bayes ($\alpha$-VB) \citep{yang2020alpha} is a variational inference framework that aims to minimize the KL divergence between the variational density and the $\alpha$-fractional posterior \citep{bhattacharya2019fractional}, defined as
$$
    P_\alpha(\theta\in B \mid Y^{(n)})= \frac{\int_B [p(Y^{(n)}\mid \theta)]^\alpha p_\theta(\theta)d\theta}{\int_\Theta [p(Y^{(n)}\mid \theta)]^\alpha p_\theta(\theta)d\theta}.
$$
This leads to the following $\alpha$-VB objective 
\small
\begin{align}
    \widehat{q}(\theta)&=\argmin_{q\in \m Q} D(q||p_\alpha(\cdot\mid Y^{(n)}))=\argmin_{q} \alpha\Psi(q),  \label{avb.obj} 
\end{align}
\normalsize
where 
\small
\begin{align*}
    \Psi(q)= \int_\Theta q(\theta) \log\left[ \frac{p(Y^{(n)}\mid \theta^*)}{p(Y^{(n)}\mid\theta)} \right]d\theta -\alpha^{-1} D[q || p_\theta]. 
\end{align*}
\normalsize
The variational expected log-likelihood ratio will be hence referred to as the model-fit term and the remaining KL term will be hence referred to as the regularization term. 

The importance of the $\alpha$-VB framework comes from its ability to upper bound the variational Bayesian risk, the integral of  $r(\theta,\theta^*)=n^{-1}D_\alpha[p_\theta^{(n)}||p_{\theta^*}^{(n)}]$ with respect to $\widehat{q}(\theta)$, by the variational objective $\Psi(q)$. Minimizing the variational objective in turn minimizes the variational risk. 

Before proceeding we motivate the form of our optimal risk bound. Consider preforming VI over the unrestricted class of densities over $\Theta$. Minimizing the $\alpha$-VB risk bound is achieved by balancing the two terms in terms in $\Psi(q)$. By choosing 
\begin{align*}
    q(\theta)=\frac{p_\theta(\theta) I_{B_n(\theta^*,\varepsilon)}(\theta)}{\bbP_{\theta}\left[B_n(\theta^*,\varepsilon)\right]},
\end{align*}

where $B_n(\theta^*,\varepsilon)$ is defined in (\ref{kl.ball}), the  model-fit term can be shown to be of order $O_p(n\varepsilon^2)$ and the regularization term can be shown to be $\alpha^{-1}\log [ P_\theta\{ B_n(\theta^*,\varepsilon)\}^{-1} ]$, a multiple of the local Bayesian complexity. This is the optimal risk bound for variational inference considering the class of all distributions as the variational family \citep{yang2020alpha}. 
We summarize this in the theorem below. 
\begin{theorem}\label{thm:riskbound}
Assume $\widehat{q}_{\mu,\sigma}$ satisfies \eqref{avb.obj} and $\widehat{q}_{\mu,\sigma} \ll p_\theta$. It holds with $ P_{\theta^*}^{(n)}$-probability at least $1-2\slash[(D-1)^2n\varepsilon^2]$ that,
\small
\begin{align*}
 \int \frac{1}{n}D_{\alpha}^{(n)}(\theta,\theta^*)\widehat{q}_{\mu,\sigma}(\theta)d\theta 
 \leq \frac{D\alpha  }{1-\alpha}\varepsilon^2 + \frac{1}{n(1-\alpha)}\log\left\{ P_\theta\left[ B_n(\theta^*,\varepsilon)\right]^{-1}\right\} + O(n^{-1}). 
\end{align*}
\normalsize
\end{theorem}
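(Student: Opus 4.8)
The proof follows the $\alpha$-variational Bayes template of \cite{yang2020alpha}: it combines the defining optimality of $\widehat q_{\mu,\sigma}$ in \eqref{avb.obj} with a single exponential change-of-measure inequality, and then plugs in the idealized competitor $q^\star(\theta) = p_\theta(\theta)I_{B_n(\theta^*,\varepsilon)}(\theta)/P_\theta[B_n(\theta^*,\varepsilon)]$, the prior restricted to the Kullback--Leibler ball \eqref{kl.ball}. The starting point is the identity, immediate from the definition of the order-$\alpha$ R\'enyi divergence together with Fubini, that for every $\theta$,
\begin{align*}
\bbE_{\theta^*}^{(n)}\Big[\exp\Big\{\alpha\sum_{i=1}^n\ell_i(\theta,\theta^*)\Big\}\Big] = \exp\big\{-(1-\alpha)D_\alpha^{(n)}(\theta,\theta^*)\big\},
\end{align*}
so that the random quantity $R_n := \int\exp\{\alpha\sum_{i=1}^n\ell_i(\theta,\theta^*) + (1-\alpha)D_\alpha^{(n)}(\theta,\theta^*)\}\,p_\theta(\theta)\,d\theta$ satisfies $\bbE_{\theta^*}^{(n)}[R_n] = 1$.

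First I would apply the Donsker--Varadhan (Gibbs) variational representation of $\log R_n$ with base measure $P_\theta$: for any $q\ll p_\theta$ (in particular $q = \widehat q_{\mu,\sigma}$, which is assumed to satisfy $\widehat q_{\mu,\sigma}\ll p_\theta$),
\begin{align*}
(1-\alpha)\int D_\alpha^{(n)}(\theta,\theta^*)\,q(d\theta) \le \log R_n - \alpha\int\Big(\sum_{i=1}^n\ell_i(\theta,\theta^*)\Big)q(d\theta) + D(q\,\|\,p_\theta).
\end{align*}
Since \eqref{avb.obj} says $\widehat q_{\mu,\sigma}$ minimizes $D(q\,\|\,p_\theta)-\alpha\int(\sum_i\ell_i)\,dq$ over the GP-IVI family, applying the previous display with $q=\widehat q_{\mu,\sigma}$ and then replacing its last two terms by their (no smaller) values at any competitor $q\in\m Q_{GP}$ gives
\begin{align*}
(1-\alpha)\int D_\alpha^{(n)}(\theta,\theta^*)\,\widehat q_{\mu,\sigma}(d\theta) \le \log R_n - \alpha\int\Big(\sum_{i=1}^n\ell_i(\theta,\theta^*)\Big)q(d\theta) + D(q\,\|\,p_\theta).
\end{align*}
Taking $q=q^\star$ yields $D(q^\star\,\|\,p_\theta) = \log P_\theta[B_n(\theta^*,\varepsilon)]^{-1}$ and reduces everything to controlling $A := \int(-\sum_i\ell_i(\theta,\theta^*))\,q^\star(d\theta)$ and $\log R_n$.

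The concentration step is then routine. Because $q^\star$ is supported on $B_n(\theta^*,\varepsilon)$, $\bbE_{\theta^*}^{(n)}[A] = \int D[p(Y^{(n)}\mid\theta^*)\,\|\,p(Y^{(n)}\mid\theta)]\,q^\star(d\theta)\le n\varepsilon^2$; and since $\bbE_{\theta^*}^{(n)}[(\sum_i\ell_i(\theta,\theta^*))^2]$ equals $V[p(Y^{(n)}\mid\theta^*)\,\|\,p(Y^{(n)}\mid\theta)]$, a Jensen/Cauchy--Schwarz argument gives $\mathrm{Var}_{\theta^*}^{(n)}(A)\le\int V[p(Y^{(n)}\mid\theta^*)\,\|\,p(Y^{(n)}\mid\theta)]\,q^\star(d\theta)\le n\varepsilon^2$, so Chebyshev's inequality at deviation $(D-1)n\varepsilon^2$ gives $A\le Dn\varepsilon^2$ with probability at least $1-1/[(D-1)^2n\varepsilon^2]$. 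Markov's inequality applied to $R_n$ (which has mean one) bounds $\log R_n$ on a second exceptional set of the same size. A union bound over the two exceptional events, followed by dividing the last display by $n(1-\alpha)$, delivers the claimed bound on a $\bbP_{\theta^*}^{(n)}$-event of probability at least $1-2/[(D-1)^2n\varepsilon^2]$, with the $\log R_n$ contribution absorbed into the $O(n^{-1})$ remainder.

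The main obstacle is the step replacing the last two terms by their values at a competitor $q\in\m Q_{GP}$: the natural choice $q^\star$ is a truncated prior and need not itself be a small-bandwidth convolution $\int_0^1\phi_\sigma(\theta-\mu(\eta))\,d\eta$, so one must either exhibit a suitable competitor inside $\m Q_{GP}$ or, more realistically, invoke the $L_1$ large-support property of GP-IVI (Theorem \ref{thm:support}) to replace $q^\star$ by an element of $\m Q_{GP}$ while perturbing $D(\cdot\,\|\,p_\theta)$ and the model-fit term only negligibly; since the Kullback--Leibler divergence is not $L_1$-continuous in general, controlling these perturbations is the delicate point and is expected to require a quantitative version of the approximation that exploits the good conditioning of $B_n(\theta^*,\varepsilon)$ under Assumptions \textbf{B3}--\textbf{B5}. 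A secondary technical point is that the crude Markov bound on $R_n$ only yields $\log R_n = O(\log(n\varepsilon^2))$, so the remainder is strictly $O(n^{-1}\log(n\varepsilon^2))$ unless a sharper tail estimate for $R_n$ is used or $n\varepsilon^2$ is treated as bounded.
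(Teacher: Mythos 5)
Your proof has a genuine gap, and you have already put your finger on it: you derive (from scratch, via Donsker--Varadhan and a Markov bound on $R_n$) the risk inequality that the paper simply quotes as Theorem~3.2 of \cite{yang2020alpha}, but the crucial step of exhibiting a competitor $q\in\m Q_{GP}$ for which both $\Psi$-terms can be controlled is left open. Plugging in $q^\star=p_\theta I_{B_n}/P_\theta(B_n)$ is not admissible because $q^\star$ is not of the form $\int_0^1\phi_\sigma(\theta-\mu(\eta))\,d\eta$, and you are right that an $L_1$-approximation via Theorem~\ref{thm:support} cannot be the way out, since neither the KL regularizer nor the model-fit term is $L_1$-continuous. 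What you are missing is that the paper already supplies exactly the right tool for this, namely the multiplicative pointwise approximation of Proposition~\ref{approximation}. The construction is: set $p_\theta=f_0$, build $f_\beta$ from $f_0$ via the iteration \eqref{eq:construction}, truncate it to $B_n(\theta^*,\varepsilon)$ to get $\widetilde f_\beta$, and take as competitor the small-bandwidth convolution $q_{\widetilde f_\beta,\sigma}=\phi_\sigma * \widetilde f_\beta$, which \emph{is} in $\m Q_{GP}$ because it equals $\int_0^1\phi_\sigma(\theta-\widetilde\mu(\eta))\,d\eta$ with $\widetilde\mu=\widetilde F_\beta^{-1}$. Proposition~\ref{approximation} then gives the strong bound $q_{\widetilde f_\beta,\sigma}(\theta)\le \widetilde f_0(\theta)\bigl(1+D(\theta)O(\sigma^\beta)\bigr)$, so both the Chebyshev control on the model-fit term and the KL computation go through exactly as they would for $q^\star$, up to a multiplicative factor $K_\beta(\sigma)=1+M_D O(\sigma^\beta)$; choosing $\sigma=n^{-2/\beta}$ makes $K_\beta(\sigma)=1+O(n^{-2})$ and the extra loss is absorbed into the remainder. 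This is the missing idea, and without it the argument does not close.

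Two smaller remarks. First, your derivation of the Yang et~al.\ inequality is fine as far as it goes, but it is not different in substance from invoking their theorem; the novelty of the proof lies entirely in the choice of competitor, not in the change-of-measure step. Second, you are correct that $\log R_n$ (equivalently $\log(1/\zeta)$ after choosing $\zeta\asymp 1/(n\varepsilon^2)$) contributes $O\!\left(n^{-1}\log(n\varepsilon^2)\right)$ rather than strictly $O(n^{-1})$; this is a harmless sharpening of what the paper writes, but it is not a gap in your argument.
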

We provide a sketch of the proof below. The full proof can be found in section \ref{sec:riskbound} of the supplementary file. Following our above motivation, we aim to show that there is a member of the GP-IVI family $\m Q_{GP}$ such that the model-fit term is of order $O_p(n\varepsilon^2)$ and the regularization term is proportional to the local Bayesian complexity. We leverage the approximation properties from \S \ref{sec:theory1} to construct an approximation that achieve this balance. We construct this variational distribution as follows. 

Let the prior distribution of $\theta$ is given by the density $p_{\theta}(\theta)=f_0(\theta)\in C^\beta[0,1]$, $\beta\in (2j,2j+2]$. Let $f_\beta=f_j$ be the density constructed as in (\ref{eq:construction}) satisfying $\Vert\phi_\sigma * f_\beta -f_0\Vert_\infty=O(\sigma^\beta)$. Define the density function
\small
\begin{align}
    \widetilde{f}_\beta(t)=\frac{f_\beta(t)I_{B_n(\theta^*,\varepsilon)}}{\int_{B_n(\theta^*,\varepsilon)}f_\beta(t)dt}
\end{align}
\normalsize
and its corresponding variational density 
\small
\begin{align}
    q_{\widetilde{f}_\beta,\sigma}(\theta)=\int_{-\infty}^{\infty}\phi_\sigma(\theta-t)\widetilde{f}_\beta(t) dt.
\end{align}
\normalsize
The model-fit term is bounded in high probability using a straight forward application of Chebychev's inequality. Using \eqref{eq:approximation}, we  bound the regularization term proportional to the local Bayesian complexity. Combining these and using Theorem 3.2 of \cite{yang2020alpha} finishes the proof. \\
\textbf{Assumption A1} Prior density $p_\theta$ satisfies $\log[P_\theta\{B_n(\theta^*,\varepsilon)\}^{-1}]\leq -n\varepsilon^2$.
\begin{remark}
Let 	$\{p_\theta, \theta \in \Theta\}$ be a parametric family of densities. Assume for 
$\theta, \theta_1, \theta_2$, there exists $\alpha > 0$ such that $D(\theta^* \| \theta) \precsim \|\theta^* - \theta\|^{2\alpha}$, 
$\mu_2(\theta^* \| \theta) \precsim \|\theta^* - \theta\|^{2\alpha}$, and $\|\theta_1 - \theta_2 \|^{\alpha}\precsim h(\theta_1,\theta_2)\precsim \|\theta_1 - \theta_2 \|^{\alpha}$. Then if the prior measure possesses a density that is uniformly bounded away from zero and infinity on $\Theta$, then Assumption
{\bf A1} is satisfied.  Assumptions of this form are  common in the literature; refer to  pg 517 \citep{ghosal2000}.
\end{remark}
\begin{corollary}
Suppose the prior density $p_\theta$ satisfies Assumption {\bf A1} and  $\widehat{q}$ satisfies \eqref{avb.obj}. It holds with probability tending to one as $n\to \infty$ that,
\begin{align*}
    \left\{\int h^2[p(\cdot\mid \theta)|| p(\cdot\mid \theta^*)] \widehat{q}_{\mu,\sigma}(\theta)d\theta\right\}^{1\slash 2} \leq  O(n^{-1}),
\end{align*}
demonstrating that the risk bound is parametric even when a flexible class of variational approximation is used. 
\end{corollary}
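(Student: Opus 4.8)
The plan is to derive the corollary directly from Theorem~\ref{thm:riskbound}, using Assumption~\textbf{A1} to control the local Bayesian complexity term and a standard divergence comparison to pass from the $\alpha$-Rényi risk to the Hellinger risk. Concretely, I would: (i) fix a sequence $\varepsilon_n\to 0$ at which Assumption~\textbf{A1} holds and $n\varepsilon_n^2\to\infty$; (ii) evaluate the bound of Theorem~\ref{thm:riskbound} at $\varepsilon=\varepsilon_n$ and use \textbf{A1} to collapse the complexity term $\tfrac{1}{n(1-\alpha)}\log\{P_\theta[B_n(\theta^*,\varepsilon_n)]^{-1}\}$ into a multiple of $\varepsilon_n^2$; (iii) bound the integrand $n^{-1}D_\alpha^{(n)}(\theta,\theta^*)$ below by a constant multiple of $h^2[p(\cdot\mid\theta)\,\|\,p(\cdot\mid\theta^*)]$; and (iv) take square roots, checking that the exceptional set in Theorem~\ref{thm:riskbound} is asymptotically negligible.

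For step (i), I would argue as in the Remark preceding the corollary: if $p_\theta$ is bounded above and below on $\Theta$ and $D(\theta^*\|\theta)\vee\mu_2(\theta^*\|\theta)\precsim\|\theta-\theta^*\|^{2\alpha}$, then $B_n(\theta^*,\varepsilon)$ contains a Euclidean ball of radius of order $\varepsilon^{1/\alpha}$ about $\theta^*$, so $P_\theta[B_n(\theta^*,\varepsilon)]\succsim\varepsilon^{d/\alpha}$ and $\log\{P_\theta[B_n(\theta^*,\varepsilon)]^{-1}\}\precsim\log(1/\varepsilon)$; Assumption~\textbf{A1} then holds with $\varepsilon_n$ of the parametric order, i.e.\ $\varepsilon_n^2\asymp n^{-1}$ up to a factor tending to infinity arbitrarily slowly (this last being exactly what forces $(D-1)^2 n\varepsilon_n^2\to\infty$, with the constant $D>1$ of Theorem~\ref{thm:riskbound} allowed to grow slowly if one wishes to dispense with even that factor).

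For steps (ii)--(iii): at $\varepsilon=\varepsilon_n$, Assumption~\textbf{A1} makes the right-hand side of Theorem~\ref{thm:riskbound} at most $\precsim\varepsilon_n^2+n^{-1}\precsim\varepsilon_n^2$, on an event of $\bbP_{\theta^*}^{(n)}$-probability at least $1-2/[(D-1)^2 n\varepsilon_n^2]\to 1$. Since $\alpha\in(0,1)$ is fixed, Rényi divergence is additive under the i.i.d.\ model, $D_\alpha^{(n)}(\theta,\theta^*)=n\,D_\alpha[p(\cdot\mid\theta)\|p(\cdot\mid\theta^*)]$, and obeys $D_\alpha[p\|q]\ge c_\alpha\,h^2(p,q)$ for a constant $c_\alpha>0$ depending only on $\alpha$ — for $\alpha=1/2$, $D_{1/2}[p\|q]=-2\log\!\int\!\sqrt{pq}\ge 2\bigl(1-\!\int\!\sqrt{pq}\bigr)=h^2(p,q)$, the general case being analogous; cf.\ \cite{bhattacharya2019fractional,yang2020alpha}. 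Hence
$$
\int h^2[p(\cdot\mid\theta)\|p(\cdot\mid\theta^*)]\,\widehat q_{\mu,\sigma}(\theta)\,d\theta\ \le\ c_\alpha^{-1}\!\int n^{-1}D_\alpha^{(n)}(\theta,\theta^*)\,\widehat q_{\mu,\sigma}(\theta)\,d\theta\ \precsim\ \varepsilon_n^2,
$$
and taking square roots gives the claimed parametric bound on the integrated Hellinger risk with probability tending to one.

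The only genuinely delicate point is step (i): $\varepsilon_n$ must be small enough to keep the bound of Theorem~\ref{thm:riskbound} at the parametric scale yet large enough that $(D-1)^2 n\varepsilon_n^2\to\infty$, so that the exceptional probability vanishes, and Assumption~\textbf{A1} is precisely the hypothesis guaranteeing such a choice exists; under the boundedness and local-smoothness conditions of the Remark it is realized at (essentially) the parametric scale. Everything substantive beyond this — exhibiting a GP-IVI member whose model-fit term is $O_p(n\varepsilon_n^2)$ and whose regularization term is proportional to the local Bayesian complexity, via the approximation bound \eqref{eq:approximation} — has already been carried out in Theorem~\ref{thm:riskbound} and \S\ref{sec:theory1}, so no new construction is required.
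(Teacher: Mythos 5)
Your route is essentially the one the paper takes: apply Theorem~\ref{thm:riskbound}, absorb the local Bayesian complexity term via Assumption~\textbf{A1}, then convert the $\alpha$-R\'enyi risk to Hellinger risk using a comparison of the form $D_\alpha[p\|q]\geq c_\alpha\,h^2(p,q)$ (the paper uses $\max\{1,\alpha/(1-\alpha)\}\,h^2\leq D_\alpha$). Where you add value is in flagging the delicacy at the parametric scale: the paper's proof simply substitutes $\varepsilon=n^{-1}$ into Theorem~\ref{thm:riskbound}, but the exceptional probability there is $2/[(D-1)^2 n\varepsilon^2]$, which at $\varepsilon=n^{-1}$ equals $2n/(D-1)^2$ and diverges for fixed $D$ — so the ``probability tending to one'' claim does not follow from the paper's argument as written. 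Your remedy (either take $\varepsilon_n^2=\omega(n^{-1})$ and accept a slowly-growing loss, or let $D=D_n\to\infty$ with $D_n\gg n^{1/2}$ so that $2n/(D_n-1)^2\to 0$ while $D_n\alpha\varepsilon_n^2/(1-\alpha)=O(D_n n^{-2})=o(n^{-1})$ remains dominated) is exactly the missing bookkeeping. One further incidental point: with $\int h^2\,\widehat q\,d\theta=O(n^{-1})$, the square-rooted quantity in the corollary should read $O(n^{-1/2})$ rather than $O(n^{-1})$; this looks like a typo in the statement, and your phrasing (``the claimed parametric bound'') implicitly corrects it.
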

\section{Conclusion}
To summarize, we have provided theoretical properties of transformation-based model in non-parametric and variational inferences in the context of NL-LVM. We characterized the space of densities induced by NL-LVM as kernel convolutions with a general class of continuous mixing measures and showed $L_1$ prior support of the transformation-based model. Placing
a GP prior on the transformation function, we obtain the optimal rate of posterior contraction up to a logarithmic factor. Adopting the flexibility of GP-LVM we constructed GP-IVI. We have shown that GP-IVI achieves both optimal $\alpha$-VI risk bounds and optimal approximation to the true posterior. In doing so, we have provided theoretical guarantees for a novel transformation-based implicit variational inference.    


\appendix

\section{Proofs of results in the main document} \label{sec:main}
\subsection{Conventions}
Equations in the main document are cited as (1), (2) etc., retaining their numbers, while new equations defined in this document are numbered (S1), (S2) etc.  In this section we collect the proof of Proposition \ref{approximation}, Theorems \ref{thm:support}, \ref{thm:compact}, \ref{thm:vi_bound} and \ref{thm:riskbound}.  

\subsection{Proof of Proposition \ref{approximation}}\label{sec:approximation}
In this section we prove the results in Proposition \ref{approximation}.\\

\textbf{Proposition 2.1 }For $f_0 \in C^{\beta}[0,1]$ with $\beta \in (2j, 2j+2]$ satisfying Assumptions \textbf{F1} and \textbf{F2}, for $f_\beta$ defined as from the iterative procedure \eqref{eq:construction} we have 
\begin{eqnarray*}
\Vert \phi_{\sigma}*f_\beta - f_0\Vert_\infty = O(\sigma^\beta),
\end{eqnarray*}
and 
\begin{align}\label{eq:approximation}
\phi_{\sigma}*f_\beta(x) = f_0(x) (1+D(x)O(\sigma^\beta)),
\end{align}

where
\begin{eqnarray*}
D(x) = \sum_{i=1}^{r} c_i {|l_j(x)|}^{\frac{\beta}{i}} + c_{r+1},
\end{eqnarray*}
for non-negative constants $c_i, i = 1, \dots, r+1$, and for any $x \in[0,1]$.

\textit{Proof. }
We now show equation \eqref{eq:approximation}. Following the proof of Lemma 1 in \cite{kruijer2010adaptive}, for any $x, y \in [0,1]$,
\begin{eqnarray*}
\log{f_0(y)} \le \log{f_0(x)} + \sum_{i=1}^{r} \frac{l_j(x)}{j!}(y-x)^j + L|y-x|^{\beta},\\
\log{f_0(y)} \ge \log{f_0(x)} + \sum_{i=1}^{r} \frac{l_j(x)}{j!}(y-x)^j - L|y-x|^{\beta}.
\end{eqnarray*}
Define
\begin{eqnarray*}
B^u_{f_0,r}(x,y) = \sum_{i=1}^{r} \frac{l_j(x)}{j!}(y-x)^j + L|y-x|^{\beta},\\
B^l_{f_0,r}(x,y) = \sum_{i=1}^{r} \frac{l_j(x)}{j!}(y-x)^j - L|y-x|^{\beta}.
\end{eqnarray*}
Then we have
\begin{eqnarray*}
e^{B^u_{f_0,r}} \le 1 + B^u_{f_0,r} + \frac{1}{2!}(B^u_{f_0,r})^2 + \dots + M |B^u_{f_0,r}|^{r+1},\\
e^{B^l_{f_0,r}} \ge 1 + B^l_{f_0,r} + \frac{1}{2!}(B^l_{f_0,r})^2 + \dots - M |B^l_{f_0,r}|^{r+1}.
\end{eqnarray*}

where 
\begin{eqnarray*}
M = \frac{1}{(r+1)!} \exp \bigg \{\sup_{x, y  \in [0, 1], x \neq y} \bigg(\bigg|\sum_{j=1}^{r} \frac{l_j(x)}{j!} (y-x)^j\bigg| + L|y-x|^{\beta}\bigg)\bigg \}.
\end{eqnarray*}

Note that $f_0$ is bounded on $[0, 1]$, we consider the convolution on the whole real line by extending $f_0$ analytically outside $[0, 1]$. For $\beta \in (1, 2], r = 1$ and $x \in (0,1)$, 
\small
\begin{align}\label{eq:integral1}
&\phi_{\sigma}* f_0(x)\le f_0(x) \int e^{B^u_{f_0,r}(x, y)} \phi_{\sigma}(y-x) dy\nonumber\\
&\le f_0(x) \int_{\mathbb{R}} \phi_{\sigma}(y-x) [ 1 + L|y-x|^{\beta} +M \{ l^2_1(x)(y-x)^2 + 2Ll_1(x)(y-x)|y-x|^{\beta} + L^2|y-x|^{2\beta} \}] dy.
\end{align}
\normalsize
Since $l_j(x)$'s are all continuous on $[0, 1]$, there exist finite constants $M_j$ such that $|l_j| \le M_j$ and $|y-x| \le 1$. The integral in the last inequality in \eqref{eq:integral1} can be bounded by 
\begin{eqnarray*}
\int_{\mathbb{R}} \phi_{\sigma}(y-x) [ 1 + L|y-x|^{\beta} + M  \{M_1^{2-\beta} |l_1(x)(y-x)|^{\beta} + (L^2 +2M_1)|y-x|^{\beta} \}] dy
\end{eqnarray*}
Therefore,
\begin{eqnarray*}
\phi_{\sigma}* f_0(x) \le f_0(x) \{ 1 + (r_1|l_1(x)|^{\beta} +r_2) \sigma^{\beta}\},
\end{eqnarray*}
where $ r_1 = M M_1^{2-\beta}\mu'_{\beta}, \  \ r_2 =  L(1 + ML + 2MM_1)\mu'_{\beta}$, and $\mu'_{\beta} = \bbE\{|y-x|^{\beta}\}$. \\
In the other direction, 
\small
\begin{eqnarray*}
\phi_{\sigma}* f_0(x) \ge  f_0(x) \int \phi_{\sigma}(y-x)[\{ 1 - L|y-x|^{\beta} - M \{ l^2_1(x)(y-x)^2 - 2Ll_1(x)(y-x)|y-x|^{\beta} + L^2|y-x|^{2\beta}\}]  dy.
\end{eqnarray*}
\normalsize
Thus we achieve expression of $\phi_\sigma * f_{\beta}$ in Proposition \ref{approximation}.

For any $\beta > 2$ and the integer $j$ such that $\beta \in (2j, 2j +2]$. We define $\phi^{(i)}*f$ as the $i$-folded convolution of $\phi$ with $f$ for any integer $i\ge 1$. First we calculate $\phi_{\sigma}* f_0(x)$, $\phi^{(2)}_{\sigma}* f_0(x)$, $\dots$, $\phi^{(j)}_{\sigma}* f_0(x)$, and by Lemma \ref{lem:approx} we get $\phi_{\sigma}* f_j(x)$. The calculation of $\phi^{(i)}_{\sigma}* f_0(x)$ is the same as that of $\phi_{\sigma}* f_0(x)$ except taking the convolution with $\phi_{\sqrt{i}\sigma}$. The terms $\sigma^2$, $\sigma^4$, $\dots$, $\sigma^{2j}$ caused by the factors containing $|y-x|^k$ for $k<\beta$ in $\phi^{(i)}_{\sigma}* f_0$ can be canceled out by Lemma \ref{lem:approx}.
For terms containing $|y-x|^k$ for $k \ge \beta$, we take out $|y-x|^{\beta}$ and bound the rest by a certain power of $|l_j(x)|$ or some constant. Following an induction in \cite{kruijer2010adaptive}, we can guarantee the approximation error of $\phi_{\sigma}*f_\beta$ is at the order of $O(\sigma^\beta)$. \qed

\subsection{Proof of Theorem \ref{thm:support}} \label{sec:support}

\textbf{Theorem 3.1. }If $\Pi_{\mu}$ has full sup-norm support on $C[0,1]$ and $\Pi_{\sigma}$ has full support on $[0, \infty)$, then the $L_1$ support of the induced prior $\Pi$ on $\mathcal{F}$ contains all densities $f_0$ which have a finite first moment and are non-zero almost everywhere on their support.
\begin{proof}
Let $f_0$ be a density with quantile function $\mu_0$ that satisfies the conditions of Theorem \ref{thm:support}. Observe that $\norm{\mu_0}_1 = \int_{t=0}^1 \abs{\mu_0(t)} dt = \int_{-\infty}^{\infty} \abs{z} f_0(z) dz < \infty$ since $f_0$ has a finite first moment, and thus $\mu_0 \in \mbox{L}_1[0, 1]$. Fix $\epsilon > 0$. We want to show that $\Pi\{ B_{\epsilon}(f_0) \} > 0$, where $B_{\epsilon}(f_0) = \{f ~:~ \norm{f - f_0}_1 < \epsilon\}$.

Note that $\mu_0 \notin C[0, 1]$, so that $\bbP( \norm{\mu - \mu_0}_{\infty} < \epsilon)$ can be zero for small enough $\epsilon$. The main idea is to find a continuous function $\widetilde{\mu}_0$ close to $\mu_0$ in $L_1$ norm and exploit the fact that the prior on $\mu$ places positive mass to arbitrary sup-norm neighborhoods of $\widetilde{\mu}_0$. The details are provided below.

Since $\norm{\phi_{\sigma}*f_0 - f_0}_1 \to 0$ as $\sigma \to 0$, find $\sigma_1$ such that $\norm{\phi_{\sigma}*f_0 - f_0}_1 < \epsilon/2$ for $\sigma < \sigma_1$. Pick any $\sigma_0 < \sigma_1$.
Since $C[0, 1]$ is dense in $\mbox{L}_1[0, 1]$, for any $\delta > 0$, we can find a continuous function $\widetilde{\mu}_0$ such that $\norm{\mu_0 - \widetilde{\mu}_0}_1 < \delta$. Now, $\norm{ f_{\mu, \sigma} - f_{\widetilde{\mu}_0, \sigma} }_1 \leq C \norm{ \mu - \widetilde{\mu}_0 }_1/\sigma$ for a global constant $C$. Thus, for $\delta = \epsilon\,\sigma_0/4$,
\begin{align*}
\big\{f_{\mu, \sigma} ~:~ \sigma_0 < &\sigma < \sigma_1, \norm{\mu - \widetilde{\mu}_0}_{\infty} < \delta \big\} \subset \big\{f_{\mu, \sigma} ~:~ \norm{f_0 - f_{\mu, \sigma}}_1 < \epsilon \big\},
\end{align*}
since $\norm{f_0 - f_{\mu, \sigma}}_1 < \norm{f_0 - f_{\mu_0, \sigma}}_1 + \norm{f_{\mu_0, \sigma} - f_{\widetilde{\mu}_0, \sigma}}_1 + \norm{f_{\widetilde{\mu}_0, \sigma} - f_{\mu, \sigma}}_1$ and $f_{\mu_0, \sigma} = \phi_{\sigma}*f_0$. Thus, $\Pi\{ B_{\epsilon}(f_0) \} > \Pi_{\mu}(\norm{\mu - \widetilde{\mu}_0}_{\infty} < \delta) \, \Pi_{\sigma}(\sigma_0 < \sigma < \sigma_1) > 0$, since $\Pi_{\mu}$ has full sup-norm support and $\Pi_{\sigma}$ has full support on $[0, \infty)$.
\end{proof}

\subsection{Proof of Theorem \ref{thm:compact}}\label{sec:compact}
In this section we will give a detailed proof for the adaptive posterior contraction rate result for the NL-LVM models. \\
\\
\textbf{Theorem 3.2. }If $f_0$ satisfies Assumptions \textbf{F1} and \textbf{F2} and the priors $\Pi_\mu$ and $\Pi_{\sigma}$ are as in
Assumptions \textbf{P1} and \textbf{P2} respectively, the best obtainable rate of posterior convergence relative to Hellinger metric $h$ is
\begin{eqnarray}\label{eq:optrate}
\epsilon_{n} = n^{-\frac{\beta}{2\beta+1}}(\log n)^{t},
\end{eqnarray}
where $t=\beta(2\vee q)/(2\beta +1) +1$.
\begin{proof}
Following \cite{ghosal2000convergence}, to obtain the posterior convergence rate we need to find sequences $\bar{\epsilon}_n,\widetilde{\epsilon}_n \to 0$ with
$n\min\{\bar{\epsilon}_n^2,\widetilde{\epsilon}_n^2\} \to \infty$  such that there exist constants $C_1, C_2, C_3, C_4> 0$ and sets $\mathcal{F}_n \subset \mathcal{F}$ so that,
\begin{align}
& \log N(\bar{\epsilon}_n, \mathcal{F}_n, d) \leq C_1n\bar{\epsilon}_n^2, \label{eq1}\\
& \Pi(\mathcal{F}_n^c) \leq C_3\exp\{-n\widetilde{\epsilon}_n^2(C_2+4)\},\label{eq2} \\
& \Pi\bigg( f_{\mu, \sigma}:  \int f_0 \log \frac{f_0}{f_{\mu, \sigma}} \leq\widetilde{\epsilon}_n^2,\, \int f_0 \log \bigg(\frac{f_0}{f_{\mu, \sigma}}\bigg)^2 \leq \widetilde{\epsilon}_n^2 \bigg) \geq C_4\exp\{-C_2n\widetilde{\epsilon}_n^2\}. \label{eq3}
\end{align}
Then we can conclude that for $\epsilon_n = \max\{\bar{\epsilon}_n, \widetilde{\epsilon}_n\}$ and sufficiently large $M > 0$, the posterior probability
\begin{eqnarray*}
\Pi_n(f_{\mu, \sigma}: d(f_{\mu, \sigma}, f_0) > M\epsilon_n | Y_1, \ldots, Y_n) \to 0 \, \, \text{a.s.}\, P_{f_0},
\end{eqnarray*}

where $P_{f_0}$ denotes the true probability measure whose the Radon-Nikodym density is $f_0$.  To proceed, we consider the Gaussian process $\mu \sim W^A$ given $A$, with $A$ satisfying Assumption \textbf{P1}.

We will first verify (\ref{eq3}) along the lines of \cite{ghosal2007posterior}. Recall $f_\beta$ is defined as from \eqref{eq:construction}, by Lemma \ref{lem:density} we guarantee that $f_\beta$ is a well-defined density. Denote by $\mu_\beta = F^{-1}_{\beta}$ the quantile function of $f_\beta$, then we have $f_{\mu_\beta, \sigma} = \phi_{\sigma}*f_\beta$. Note that
\begin{eqnarray}\label{eq:H2}
h^{2}(f_0, f_{\mu, \sigma}) \precsim h^{2}(f_0, f_{\mu_\beta, \sigma}) + h^{2}(f_{\mu_\beta, \sigma}, f_{\mu, \sigma}).
\end{eqnarray}
Under Assumptions \textbf{F1} and \textbf{F2} and by Lemma \ref{lem:KL}, one obtains
\begin{eqnarray}\label{eq:H}
 h^{2}(f_0, f_{\mu_\beta, \sigma}) \le \int f_0 \log\bigg(\frac{f_0}{f_{\mu_\beta, \sigma}}\bigg) \precsim O(\sigma^{2\beta}).
\end{eqnarray}

From Lemma \ref{lem:hellinger} and the following remark, we obtain
\begin{eqnarray}
h^{2}(f_{\mu_\beta, \sigma}, f_{\mu, \sigma}) \precsim \frac{\norm{\mu- \mu_\beta}_{\infty}^2}{\sigma^2}.
\end{eqnarray}
From Lemma 8 of \cite{ghosal2007posterior}, one has
\begin{eqnarray}\label{eq:KtoH}
\int f_0 \log \bigg(\frac{f_0}{f_{\mu, \sigma}}\bigg)^{i} \leq h^2(f_0, f_{\mu,\sigma})\bigg(1 + \log \bigg\|\frac{f_0}{f_{\mu, \sigma}}\bigg\|_{\infty}\bigg)^{i},
\end{eqnarray}
for $i=1,2$.

From (\ref{eq:H2})-(\ref{eq:KtoH}), for any $b \geq 1$ and $\widetilde{\epsilon}_n^2 = \sigma_n^{2\beta}$,
\begin{eqnarray*}
\big\{ \sigma \in [\sigma_n, 2\sigma_n], \norm{\mu - \mu_\beta}_{\infty} \precsim \sigma_n^{\beta+1} \big\} \subset \bigg\{  \int f_0 \log \frac{f_0}{f_{\mu, \sigma}} \precsim \sigma_n^{2\beta}, \int f_0 \log \bigg(\frac{f_0}{f_{\mu, \sigma}}\bigg)^2 \precsim \sigma_n^{2\beta}\bigg\}.
\end{eqnarray*}

Since $\mu_\beta \in C^{\beta+1}[0,1]$, from Section 5.1 of \cite{van2009adaptive},
\begin{align*}
&\Pi_{\mu}(\norm{\mu - \mu_\beta}_{\infty} \leq 2\delta_n) \geq C_4\exp\bigg\{-C_5(1/\delta_n)^{\frac{1}{\beta+1}}\log\bigg(\frac{1}{\delta_n}\bigg)^{2\vee q} \bigg\}(C_6/\delta_n)^{(p+1)/(\beta+1)},
\end{align*}
for $\delta_n \to 0$ and constants $C_4, C_5, C_6 > 0$.  Letting $\delta_n = \sigma_n^{\beta+1}$, we obtain
\begin{align*}
&\Pi_{\mu}(\norm{\mu - \mu_\beta}_{\infty} \leq 2\delta_n) \geq \exp\bigg\{-C_7\bigg(\frac{1}{\sigma_n}\bigg)\log\bigg(\frac{1}{\sigma_n^{\beta+1}}\bigg)^{2\vee q}\bigg\},
\end{align*}
for some constant $C_7 > 0$.
Since $\sigma \sim IG(a_{\sigma}, b_{\sigma})$,
we have
\begin{align*}
\Pi_{\sigma}(\sigma \in [\sigma_n, 2\sigma_n ]) &= \frac{b_{\sigma}^{a_{\sigma}}}{\Gamma(a_{\sigma})}\int_{\sigma_n}^{2\sigma_n}x^{-(a_{\sigma}+1)} e^{-b_{\sigma}/x}dx \\
&\geq \frac{b_{\sigma}^{a_{\sigma}}}{\Gamma(a_{\sigma})}\int_{\sigma_n}^{2\sigma_n} e^{-2b_{\sigma}/x}dx \\
&\geq \frac{b_{\sigma}^{a_{\sigma}}}{\Gamma(a_{\sigma})}\sigma_n\exp\{-b_{\sigma}/\sigma_n \}\\
&\geq \exp\{-C_8/\sigma_n \},
\end{align*}
for some constant $C_8> 0$. Hence
\begin{align*}
\Pi \{\sigma \in [\sigma_n, 2\sigma_n ], \norm{\mu - \mu_\beta}_{\infty} \precsim \sigma_n^{\beta+1}\} &\geq \exp\bigg\{-C_7\bigg(\frac{1}{\sigma_n}\bigg)\log\bigg(\frac{1}{\sigma_n^{\beta+1}}\bigg)^{2\vee q}\bigg\}\exp\{-C_8/\sigma_n \}\\
&\ge \exp\bigg\{-2C_9\bigg(\frac{1}{\sigma_n}\bigg)\log\bigg(\frac{1}{\sigma_n^{\beta+1}}\bigg)^{2\vee q}\bigg\}. 
\end{align*}
Then (\ref{eq3}) will be satisfied with $\widetilde{\epsilon}_n = n^{-\beta/(2\beta+1)}\log^{t_1}(n)$, where $t_1=\beta(2\vee q)/(2\beta+1)$ and some $C_9 > 0$.
Next we construct a sequence of subsets $\mathcal{F}_n$ such that \eqref{eq1} and \eqref{eq2} are satisfied with
$\bar{\epsilon}_n = n^{-\beta/(2\beta+1)}\log ^{t_2}n$ and $\widetilde{\epsilon}_n$ for some global constant $t_2 > 0$.

Now we construct the sieves for $\mathcal{F}$. Letting $\mathbb{H}_1^a$ denote the unit ball of RKHS of the Gaussian process with rescaled parameter $a$ and  $\mathbb{B}_1$ denote the unit ball of
$C[0,1]$ and given positive sequences $M_n, r_n$, define
\begin{eqnarray*}
B_n =  \cup_{a < r_n}(M_n \mathbb{H}^a_1) + \bar{\delta}_n\mathbb{B}_1,
\end{eqnarray*}
as in \cite{van2009adaptive}, with $\bar{\delta}_n =\bar{\epsilon}_nl_n/K_1, K_1 = 2(2/\pi)^{1/2}$ and let
\begin{eqnarray*}
\mathcal{F}_n = \{f_{\mu, \sigma}: \mu \in B_n, l_n < \sigma < h_n \}.
\end{eqnarray*}
First we need to calculate $N(\bar{\epsilon}_n, \mathcal{F}_n, \norm{\cdot}_1)$. Observe that for $\sigma_2 >\sigma_1 > \sigma_2/2$,
\begin{eqnarray*}
\norm{f_{\mu_1, \sigma_1} - f_{\mu_2, \sigma_2}}_1 \leq \bigg(\frac{2}{\pi}\bigg)^{1/2}\frac{\norm{\mu_1 - \mu_2}_{\infty}}{\sigma_1} + \frac{3(\sigma_2 - \sigma_1)}{\sigma_1}.
\end{eqnarray*}

Taking $\kappa_n =\min\{\bar{\epsilon}_n/6, 1\}$ and $\sigma_m^n = l_n (1+ \kappa_n)^m, m \geq 0$, we obtain a partition of $[l_n, h_n]$ as $l_n=\sigma_0^n < \sigma_1^n < \cdots < \sigma_{m_{n}-1}^n < h_n \leq \sigma_{m_n}^n$ with
\begin{eqnarray}\label{eq:entropy1}
m_n= \bigg(\log \frac{h_n}{l_n}\bigg) \frac{1}{\log( 1+\kappa_n)} +1.
\end{eqnarray}
One can show that $3(\sigma_m^n -\sigma_{m-1}^n)/\sigma_{m-1}^n = 3\kappa_n \leq \bar{\epsilon}_n/2$. Let
$\{\widetilde{\mu}_k^n, k=1, \ldots, N(\bar{\delta}_n, B_n, \norm{\cdot}_{\infty})\}$  be
a $\bar{\delta}_n$-net of $B_n$. Now consider the set
\begin{eqnarray} \label{eq:net}
\{(\widetilde{\mu}_k^n, \sigma_{m}^n): k=1, \ldots, N(\bar{\delta}_n, B_n, \norm{\cdot}_{\infty}),\, 0\leq m\leq m_n \}.
\end{eqnarray}
Then for any $f= f_{\mu, \sigma} \in \mathcal{F}_n$, we can find $(\widetilde{\mu}_k^n, \sigma_{m}^n)$ such that
$\norm{\mu - \widetilde{\mu}_k^n}_{\infty} < \bar{\delta}_n$. In addition, if one has $\sigma \in (\sigma_{m-1}^{n}, \sigma_m^{n}]$,
then
\begin{eqnarray*}
\norm{f_{\mu, \sigma} - f_{\mu_k^n, \sigma^n_m}}_1 \leq \bar{\epsilon}_n.
\end{eqnarray*}
Hence the set in (\ref{eq:net}) is an $\bar{\epsilon}_n$-net of $\mathcal{F}_n$ and its covering number is given by
\begin{eqnarray*}
m_nN(\bar{\delta}_n, B_n, \norm{\cdot}_{\infty}).
\end{eqnarray*}
From the proof of Theorem 3.1 in \cite{van2009adaptive}, for any $M_n, r_n$ with $r_n > 0$, we obtain
\begin{eqnarray}\label{eq:entropy2}
\log N(2\bar{\delta}_n, B_n, \norm{\cdot}_{\infty}) \leq K_2 r_n \bigg( \log \bigg(\frac{M_n}{\bar{\delta}_n}\bigg)\bigg)^{2}.
\end{eqnarray}
Again from the proof of Theorem 3.1 in \cite{van2009adaptive}, for $r_n > 1$ and for $M_n^2 > 16K_3r_n (\log (r_n / \bar{\delta}_n))^2$, we have
\begin{eqnarray}\label{eq:compact1}
\bbP(W^A \notin B_n) \leq \frac{K_4r_n^p e^{-K_5r_n\log^q r_n}}{K_5\log^q r_n} + \exp\{-M_n^2/8\},
\end{eqnarray}
for constants $K_3, K_4, K_5 > 0$.

Next we calculate $\bbP(\sigma \notin [l_n, h_n])$. Observe that
\begin{align}
\bbP(\sigma \notin [l_n, h_n ]) &= \bbP(\sigma^{-1} < h_n^{-1}) + \bbP(\sigma^{-1} > l_n^{-1})\nonumber\\
 &\leq \sum_{k=\alpha_{\sigma}}^{\infty}\frac{e^{-b_{\sigma}h_n^{-1}}(b_{\sigma}h_n^{-1})^k}{k!} + \frac{b_{\sigma}^{a_{\sigma}}}{\Gamma(a_{\sigma})}\int_{l_n^{-1}}^{\infty} e^{-b_{\sigma}x/2}dx \nonumber \\
&\leq e^{-a_{\sigma}\log(h_n)} + \frac{b_{\sigma}^{a_{\sigma}}}{\Gamma(a_{\sigma})}e^{-b_{\sigma}l_n^{-1}/2}.\label{eq:compact2}
\end{align}
Thus with $h_n = O(\exp\{n^{1/{(2\beta+1)}}(\log n)^{2t_1}\})$, $ l_n = O(n^{-1/{(2\beta+1)}}(\log n)^{-2t_1})$, $ r_n = O(n^{1/{(2\beta+1)}}(\log n)^{2t_1})$, $ M_n = O(n^{1/{(2\beta+1)}}(\log n)^{t_1+1})$, (\ref{eq:compact1}) and (\ref{eq:compact2}) implies
\begin{eqnarray*}
\Pi(\mathcal{F}_n^c)= \exp\{-K_6n\widetilde{\epsilon}^2_n\},
\end{eqnarray*}
for some constant $K_6 > 0$, which guarantees that (\ref{eq2}) is satisfied with $\widetilde{\epsilon}_n = n^{-\beta/{(2\beta+1)}}(\log n)^{t_1}$.

Also with $\bar{\epsilon}_n = n^{-\beta/{(2\beta+1)}}(\log n)^{t_1+1}$,  it follows from
(\ref{eq:entropy1}) and (\ref{eq:entropy2}) that
\begin{eqnarray*}
\log N(\bar{\epsilon}_n, \mathcal{F}_n, \norm{\cdot}_1) \leq K_7 n^{1/{(2\beta+1)}}(\log n)^{2t_1+2},
\end{eqnarray*}
for some constant $K_7 > 0$. Hence $\max\{\bar{\epsilon}_n, \widetilde{\epsilon}_n\} = n^{-\beta/{(2\beta+1)}}(\log n)^{t_1+1}$.
\end{proof}

\subsection{Proof of Theorem \ref{thm:vi_bound}}\label{sec:vi_bound}
In this section, we present the detailed proof of the high probability bound for KL divergence between the true posterior and its  $\alpha$-VB approximation in the case of the GP-IVI.\\
\\
\textbf{Theorem 4.1. }Under assumptions \textbf{B1} through \textbf{B5} it hold that $m^*_n(\m Q_n)=\min_{q\in \m Q_n} \left\{D[q||p(\cdot \mid Y^{(n)})]\right\}$ is bounded in probability with respect to the data generating distribution. Formally, given any $\varepsilon>0$, there exists $M_\varepsilon, N_\varepsilon >0$ such that for $n\geq N_\varepsilon$, we have $\bbP_{\theta^*}^{(n)}(m^*_n(\m Q_n)> M_\varepsilon) \leq \varepsilon$.\\
\\
The objective $m_n^*(\m Q_n)$ can be bounded above by $D[q||p(Y^{(n)} \mid \theta)]$ for any $q\in \m Q_n$. Choosing $q$ as a particular univariate Gaussian centered at the true parameter with variance satisfying our assumptions \textbf{B1}-\textbf{B5} allows us to bound the KL divergence between the true posterior $p(Y^{(n)} \mid \theta)$ in high $\bbP_{\theta^*}^{(n)}$-probability. 

\begin{proof}
It follows from the definition of $m_n^*(\m Q_n)$ that for any $q\in \m Q_n$ 
\begin{align*}
    m_n^*(\m Q_n)\leq D(q||p(\cdot \mid Y^{(n)})).
\end{align*}
Choose $\mu_n$ to be the quantile function of the distribution $N(\theta^*,\sigma_n^2)$. Define the variational distribution
\begin{align*}
    q_n(\theta)= \int \phi_\sigma(\theta-\mu_n(u))du,
\end{align*} where $\sigma_n$ satisfies assumption \textbf{B2}. 
By change of measure,
\begin{align*}
  \int \phi_\sigma(\theta-\mu_n(u))du=\int \phi_\sigma(\theta-t)\phi_{\sigma_n}(t-\theta^*)dt = N(\theta;\theta^*,\sigma^2+\sigma_n^2).
\end{align*} 
Therefore  $q_n(\theta)=N(\theta;\theta^*,\sigma^2+\sigma_n^2) \in \m Q_n$. Denote by $\bbE_n$ the mean respect to $q_n$. Expanding $D(q_n||p(Y^{(n)} \mid \theta))$,
\begin{align*}
    \bbE_n\left[ \log \frac{q_n(\theta)}{p(Y^{(n)} \mid \theta)(\theta)} \right]= \bbE_n[\log q_n] + \bbE_n[U(\theta)] + \log m(Y^{(n)}) - \bbE_n\left[L_n(\theta,\theta^*)\right],
\end{align*}
where $L_n(\theta,\theta^*)= \sum_{i=1}^n\ell_i(\theta,\theta^*)$. Since the sum of $O_p(1)$ terms is $O_p(1)$, it suffices to show that each of the terms in the above sum is $O_p(1)$. The first term $\bbE_n[\log q_n]$, the differential entropy of $q_n$, is a constant and is $O_p(1)$. A straight forward application of Markov's inequality along with the fact that $\bbE_{\theta^*}^{(n)}[ m(Y^{(n)})]=1$ shows that $\log m(Y^{(n)})$ is $O_p(1)$.

Next, expand each of the functions  $D(\theta^*||\theta)$, $\mu_2(\theta^*||\theta)$, and $U(\theta)$ using a multivariate Taylor expansion around $\theta^*$. Applying assumptions \textbf{B4} and \textbf{B5} shows 
 \begin{align}
    \bbE_n[U(\theta)]\leq C_1 (\sigma^2 + \sigma_n^2),\nonumber\\
    \bbE_n[\mu_2(\theta^*||\theta)]\leq C_2 (\sigma^2 + \sigma_n^2),\label{mu2.ub}\\
    \bbE_n[D(\theta^*||\theta)] \leq C_u (\sigma^2 + \sigma_n^2), \label{kl.ub}\\
    \bbE_n[D(\theta^*||\theta)] \geq C_\ell (\sigma^2 + \sigma_n^2). \label{kl.lb}
\end{align}
Markov's inequality shows that $U(\theta)$ is $O_p(1)$. We will use Chebychev's inequality to show $\bbE_n\left [\sum_{i=1}^n\ell_i(\theta,\theta^*)\right]$ is $O_p(1)$. Given $\varepsilon >0$, choose $\delta= \left [C_2c_0\slash (\varepsilon C_\ell)^2 \right]^{1\slash 2}$. Using (\ref{mu2.ub})-(\ref{kl.lb}) and noting that $-\bbE^{(n)}_{\theta^*}\{L_n(\theta,\theta^*)\} =nD(\theta^*||\theta)$, we have
\small
\begin{align*}
\mb P_{\theta^*}^{(n)}\left\{\bbE_n[ L_n(\theta,\theta^*)]  \leq -C_u(1+\delta)n(\sigma^2+ \sigma_n^2) \right\}
& \leq 
\mb  P_{\theta^*}^{(n)}\left\{ \bbE_n[ L_n(\theta,\theta^*)] \leq -(1+\delta)n\bbE_n[D(\theta^*||\theta)] \right\} \\
&\leq \mb  P_{\theta^*}^{(n)} \left\{  \frac{1}{\sqrt{n}}\bbE_n[ L_n(\theta,\theta^*)- \bbE^{(n)}_{\theta^*}\{L_n(\theta^*,\theta)\} ]  \leq -\delta \sqrt{n}\bbE_n[D(\theta^*||\theta)] \right\}\\
&\leq \frac{\text{Var}_{\theta^*}^{(n)}\left( \bbE_n[ \ell_1(\theta,\theta^*)] \right)}{\delta^2 n \left(\bbE_n[ D(\theta^*||\theta)] \right)^2}
\leq \frac{\bbE_n[\mu_2(\theta^*||\theta^*)]}{\delta^2 n \left(\bbE_n[D(\theta^*||\theta)]\right)^2}\\
&\leq \frac{C_2(\sigma^2+\sigma_n^2)}{\delta^2nC_{\ell}(\sigma^2+\sigma_n^2)^2}\leq \frac{C_2}{\delta^2nC_\ell^2(\sigma^2+\sigma_n^2)}\leq\frac{C_2}{\delta^2n C_\ell^2 \sigma_n^2}. 
\end{align*}
\normalsize
Applying assumption \textbf{B2} we have $c_0^{-1\slash 2}n^{-1\slash2} \leq \sigma_n \leq n^{-1\slash 2}$. This gives
\small
\begin{align*}
\bbP_{\theta^*}^{(n)}\left\{ \int L_n(\theta,\theta^*)q_n(\theta)d\theta \leq -2C_u(1+(C_2c_0\slash(\varepsilon C_\ell^2))^{1\slash 2})\right\}\leq 
 \bbP_{\theta^*}^{(n)}\left\{ \int L_n(\theta,\theta^*)q_n(\theta)d\theta \leq -C_u(1+\delta)n(\sigma^2+\sigma_n^2)\right\}  \leq \varepsilon.
\end{align*}
\normalsize
Thus $\bbE_n[L_n(\theta,\theta^*)]$ is $O_p(1)$. This completes the proof.
\end{proof}

\subsection{Proof of Theorem \ref{thm:riskbound}}\label{sec:riskbound}

In this section, we present the detailed proof of the Bayesian risk bound for $\alpha$-variational inference in the case of the GP-IVI model. We also present a proof of the corollary for the Hellinger risk bound. The main theorem and the lemmas are restated here for convenience. Our risk bound is based of the following theorem, 
\begin{theorem}[\cite{yang2020alpha}]
For any $\zeta\in (0,1)$, it holds with $\bbP_{\theta^*}^{(n)}$-probability at least $(1-\zeta)$ that for any probability measure $q\in \m Q$ with $q \ll p_\theta$,
\begin{align*}
\int \frac{1}{n}D_\alpha[p_\theta^{(n)}||p_{\theta^*}^{(n)}]\widehat{q}(\theta)d\theta \leq \frac{\alpha\Psi(q) + \log(1\slash \zeta)}{n(1-\alpha)}. 
\end{align*}
\end{theorem}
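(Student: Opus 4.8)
The plan is to reproduce the argument of \cite{yang2020alpha}, which rests on exactly two ingredients: the Donsker--Varadhan (Gibbs) variational representation of the Kullback--Leibler divergence, and the exponential-moment identity characterising the R\'enyi divergence. Throughout write $L_n(\theta)=\sum_{i=1}^n\ell_i(\theta,\theta^*)=\log[p(Y^{(n)}\mid\theta)/p(Y^{(n)}\mid\theta^*)]$ and $D_\alpha^{(n)}(\theta)=D_\alpha[p_\theta^{(n)}\,\|\,p_{\theta^*}^{(n)}]$. The first step is the purely deterministic observation that, by the definition of the R\'enyi divergence of order $\alpha$,
\[
E_{\theta^*}^{(n)}\!\bigl[e^{\alpha L_n(\theta)}\bigr]=\int (p_\theta^{(n)})^{\alpha}(p_{\theta^*}^{(n)})^{1-\alpha}=e^{-(1-\alpha)D_\alpha^{(n)}(\theta)}\quad\text{for every }\theta .
\]

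Next I would apply the Donsker--Varadhan inequality with base measure $p_\theta$ and the \emph{tuned} test function $h(\theta)=\alpha L_n(\theta)+(1-\alpha)D_\alpha^{(n)}(\theta)$, writing $W_n:=\log\int p_\theta(\theta)\,e^{h(\theta)}\,d\theta$. For every probability measure $q\ll p_\theta$ --- in particular for the data-dependent minimiser $\widehat q$ --- one has $\int h\,dq-D(q\,\|\,p_\theta)\le W_n$, that is,
\[
(1-\alpha)\!\int\! D_\alpha^{(n)}\,dq\ \le\ W_n+D(q\,\|\,p_\theta)-\alpha\!\int\! L_n\,dq .
\]
The combination $D(q\,\|\,p_\theta)-\alpha\int L_n\,dq$ equals $\alpha\Psi(q)$, since the model-fit term is $\int q\log[p(Y^{(n)}\mid\theta^*)/p(Y^{(n)}\mid\theta)]=-\int L_n\,dq$ and the regularisation contribution is $\alpha^{-1}D(q\,\|\,p_\theta)$; thus $(1-\alpha)\int D_\alpha^{(n)}\,dq\le\alpha\Psi(q)+W_n$ for every $q\ll p_\theta$. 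Specialising to $q=\widehat q$ and then invoking optimality, $\alpha\Psi(\widehat q)=\min_{q'\in\m Q}\alpha\Psi(q')\le\alpha\Psi(q)$ for any competitor $q\in\m Q$, yields the deterministic bound $(1-\alpha)\int D_\alpha^{(n)}(\theta)\,\widehat q(\theta)\,d\theta\le\alpha\Psi(q)+W_n$.

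It then remains only to show $W_n\le\log(1/\zeta)$ with $P_{\theta^*}^{(n)}$-probability at least $1-\zeta$, which follows from controlling the single non-negative random variable $e^{W_n}$. Since the integrand is non-negative and $D_\alpha^{(n)}(\theta)$ is non-random, Tonelli's theorem together with the identity above gives
\[
E_{\theta^*}^{(n)}\!\bigl[e^{W_n}\bigr]=\int p_\theta(\theta)\,e^{(1-\alpha)D_\alpha^{(n)}(\theta)}\,E_{\theta^*}^{(n)}\!\bigl[e^{\alpha L_n(\theta)}\bigr]\,d\theta=\int p_\theta(\theta)\,d\theta=1 ,
\]
so Markov's inequality yields $P_{\theta^*}^{(n)}\{W_n>\log(1/\zeta)\}\le\zeta$. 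On the complementary event the deterministic bound becomes $(1-\alpha)\int D_\alpha^{(n)}(\theta)\,\widehat q(\theta)\,d\theta\le\alpha\Psi(q)+\log(1/\zeta)$, and dividing by $n(1-\alpha)$ is exactly the stated inequality.

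Every computation here is a one-liner; the whole content lies in the choice of the exponent $h=\alpha L_n+(1-\alpha)D_\alpha^{(n)}$, engineered so that the R\'enyi term exactly cancels the cumulant generating function $\log E_{\theta^*}^{(n)}[e^{\alpha L_n}]$ and forces $E_{\theta^*}^{(n)}[e^{W_n}]=1$ on the nose. The one point demanding care is that the bound is asserted for the \emph{data-dependent} optimiser $\widehat q$, so the Donsker--Varadhan step must be used in its ``valid for every $q\ll p_\theta$'' form rather than as a fixed-$q$ Chernoff bound; the hypothesis $\widehat q\ll p_\theta$ is precisely what licenses that step (and guarantees the left-hand side is finite).
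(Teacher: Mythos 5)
Your proof is correct and is exactly the argument behind this result in \cite{yang2020alpha}: the paper itself does not prove the statement but imports it, and your combination of the R\'enyi moment identity $E_{\theta^*}^{(n)}[e^{\alpha L_n(\theta)}]=e^{-(1-\alpha)D_\alpha^{(n)}(\theta)}$ with the Donsker--Varadhan inequality applied to $h=\alpha L_n+(1-\alpha)D_\alpha^{(n)}$, followed by Markov's inequality for $e^{W_n}$ (using Tonelli to get $E_{\theta^*}^{(n)}[e^{W_n}]=1$) and the optimality of $\widehat q$, reproduces that argument faithfully, including the correct handling of the data-dependence of $\widehat q$ via the ``valid for every $q\ll p_\theta$'' form of Donsker--Varadhan. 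One cosmetic point: you read the regularization contribution in $\Psi$ as $+\alpha^{-1}D(q\,\|\,p_\theta)$, which is the convention actually used in the supplementary proof of Theorem \ref{thm:riskbound} and in \cite{yang2020alpha}; the minus sign in the main-text display of $\Psi$ is a typo, so your usage is the right one.
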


The GP-IVI risk bound is stated as follows.

\textbf{Theorem 4.2. }Assume $\widehat{q}_{\mu,\sigma}$ satisfies \eqref{avb.obj} and $\widehat{q}_{\mu,\sigma} \ll p_\theta$. It holds with $ \bbP_{\theta^*}^{(n)}$-probability at least $1-2\slash[(D-1)^2(1+O(n^{-2}))n\varepsilon^2]$ that,
\begin{align*}
 \int \frac{1}{n}D_{\alpha}^{(n)}(\theta,\theta^*)\widehat{q}_{\mu,\sigma}(\theta)d\theta  \leq \frac{D\alpha  }{1-\alpha}\varepsilon^2 + \frac{1}{n(1-\alpha)}\log\left\{ \bbP_\theta\left[ B_n(\theta^*,\varepsilon)\right]^{-1}\right\} + O(n^{-1}). 
\end{align*}

The desired risk bound follows from bounding the right hand side of Theorem 3.2 of \cite{yang2020alpha} 
\begin{align*}
    \frac{\alpha}{n(1-\alpha)} \Psi(q_{\mu,\sigma})&:= \frac{\alpha}{n(1-\alpha)}\left [ \int q_{\mu,\sigma}(\theta) \log \frac{p(Y^{(n)}\mid \theta^*)}{ p(Y^{(n)}\mid \theta)} d\theta +\frac{1}{\alpha}D(q_{\mu,\sigma}||p_\theta) \right]
\end{align*}
in high $\bbP_{\theta^*}^{(n)}$-probability in terms of the local Bayesian complexity $\log \bbP_\theta(B_n(\theta^*,\varepsilon))$. By choosing a particular member of the variational family we can bound both the likelihood ratio integral as well as the KL divergence between the prior and the variational approximation. The relation between the variational distribution and the local Bayesian complexity come from the KL divergence term.

\begin{proof}
 We will construct a special choice of $\mu$ as follows. Denote $p_\theta(\theta)=f_0(\theta)$. Let $B_n(\theta^*,\varepsilon)$ be as in (\ref{kl.ball}). Define the truncated densities
 \begin{align*}
    \widetilde{f}_0(t)=\frac{f_0(t)I_{B_n(\theta^*,\varepsilon)}(t)}{\int_{B_n(\theta^*,\varepsilon)} f_0(u)du}=\frac{f_0(t)I_{B_n(\theta^*,\varepsilon)}(t)}{\bbP_\theta(B_n(\theta^*,\varepsilon))},\quad 
    \widetilde{f}_\beta(t)=\frac{f_\beta(t)I_{B_n(\theta^*,\varepsilon)}(t)}{\int_{B_n(\theta^*,\varepsilon)} f_\beta(u)du}, 
\end{align*}
where $f_\beta$ is constructed by procedure \eqref{eq:construction} such that $\Vert \phi_\sigma*f_\beta-f_0\Vert_\infty=O(\sigma^\beta)$ along with its associated distribution functions
\begin{align*}
    \widetilde{F}_0(t)=\int_{(-\infty,t]\cap B_n(\theta^*,\varepsilon)}\widetilde{f}_0(t)dt,\quad
    \widetilde{F}_\beta(t)=\int_{(-\infty,t]\cap B_n(\theta^*,\varepsilon)}\widetilde{f}_\beta(t)dt.
\end{align*}
Define the quantile function of $\widetilde{F}_\beta$ as $\widetilde{\mu}(t)=\widetilde{F}^{-1}_\beta(t)$. This can be used to define the variational density
\begin{align*}
    q_{\widetilde{f}_\beta,\sigma}(\theta) = \int_{[0,1]} \phi_\sigma(\theta-\widetilde{\mu}(\eta))d\eta =\int_{-\infty}^\infty \phi_\sigma(\theta-t)\widetilde{f}_\beta(t)dt 
    = \phi_\sigma * \widetilde{f}_\beta(\theta),
\end{align*}
with $\sigma>0$ a bandwidth that will be specified later in the proof. The main tool for the proof will be from Proposition \ref{approximation} 
\begin{align}
    q_{\widetilde{f}_\beta,\sigma}(\theta) = \phi_\sigma * \widetilde{f}_\beta(\theta) \leq \widetilde{f}_0(\theta)(1+D(\theta)O(\sigma^\beta)). \label{q.bound}
\end{align} 
Denote $M_D=\sup_{B_n(\theta^*,\varepsilon)}D(\theta)$ and $K_\beta(\sigma)=1+M_D O(\sigma^\beta)$. We will now bound the model-fit term. Denote the random variable
\begin{align*}
    H(Y^{(n)},\widetilde{f}_\beta,\sigma)&=\int q_{\widetilde{f}_\beta,\sigma}(\theta) \log [p(Y^{(n)}\mid \theta^*)\slash p(Y^{(n)}\mid \theta)] d\theta.
\end{align*}
The mean and variance (with respect to the data generating distribution) of the model-fit term are bounded by applying \eqref{q.bound},
\begin{align*}
   \bbE_{\theta^*}^{(n)}[  H(Y^{(n)},\widetilde{f}_\beta,\sigma)]  &=   \int  D[p(Y^{(n)}\mid \theta^*)|| p(Y^{(n)}\mid \theta)] q_{\widetilde{f}_\beta,\sigma}(\theta)d\theta \\
   &\leq \int  D[p(Y^{(n)}\mid \theta^*)|| p(Y^{(n)}\mid \theta)] \widetilde{f}_0(\theta)(1+D(\theta)O(\sigma^\beta)) d\theta\\
   &\leq K_\beta(\sigma) \int_{B(\theta^*,\varepsilon)} D[p(Y^{(n)}\mid \theta^*)|| p(Y^{(n)}\mid \theta)] \frac{f_0(\theta)}{\bbP_\theta[B_n(\theta^*,\varepsilon)]} d\theta\\
   &\leq K_\beta(\sigma)n\varepsilon^2,
   \end{align*}
and
\begin{align*}
   \text{Var}_{\theta^*}^{(n)}[H(Y^{(n)},\widetilde{\mu},\sigma)] &\leq \int  V[p(Y^{(n)}\mid \theta^*)|| p(Y^{(n)}\mid \theta)] q_{\widetilde{f}_\beta,\sigma}(\theta)d\theta \\
   &\leq \int  V[p(Y^{(n)}\mid \theta^*)|| p(Y^{(n)}\mid \theta)] \widetilde{f}_0(\theta)(1+D(\theta)O(\sigma^\beta)) d\theta\\
   &\leq K_\beta(\sigma) \int_{B(\theta^*,\varepsilon)} V[p(Y^{(n)}\mid \theta^*)|| p(Y^{(n)}\mid \theta)] \frac{f_0(\theta)}{\bbP_\theta[B_n(\theta^*,\varepsilon)]} d\theta\\
   &\leq K_\beta(\sigma)n\varepsilon^2.
\end{align*}
It follows from Chebyshev's inequality that with $\bbP_{\theta^*}^{(n)}$-probability at least $1-1\slash[(D-1)^2K_\beta(\sigma)n\varepsilon^2]$
\begin{align*}
    \int q_{\widetilde{f}_\beta,\sigma}(\theta) \log\left[ \frac{p(Y^{(n)}\mid \theta^*)}{ p(Y^{(n)}\mid \theta)}\right] d\theta \leq  DK_\beta(\sigma)n\varepsilon^2. 
\end{align*}
Next we will bound the regularization in terms of the local Bayesian complexity. 
Using \eqref{q.bound} we can bound the KL divergence,
\begin{align*}
    D[q_{\widetilde{f}_\beta,\sigma}||p_\theta] &=  \int q_{\widetilde{f}_\beta,\sigma}(\theta) \log\left[ \frac{q_{\widetilde{f}_\beta,\sigma}(\theta)}{f_0(\theta)}\right] d\theta 
\leq  \int \log\left[  \frac{\widetilde{f}_0(\theta)(1+O(D(\theta)\sigma^\beta))}{f_0(\theta)}\right ] \widetilde{f}_0(\theta)(1+O(D(\theta)\sigma^\beta)) d\theta.  \\
  \end{align*}
  Expanding $\widetilde{f}_0(\theta)$ and making use of the convention $I_{B_n(\theta^*,\varepsilon)}(\theta)\log(I_{B_n(\theta^*,\varepsilon)}(\theta))=0$ for $\theta\notin B_n(\theta^*,\varepsilon)$ we have 
  \small
 \begin{align*}   
&\int \log\left[  \frac{f_0(\theta)I_{B_n(\theta^*,\varepsilon)}(1+O(D(\theta)\sigma^\beta))}{f_0(\theta)\bbP_\theta[B_n(\theta^*,\varepsilon)] }\right ] \frac{f_0(\theta)I_{B_n(\theta^*,\varepsilon)}}{\bbP_\theta[B_n(\theta^*,\varepsilon)]}(1+O(D(\theta)\sigma^\beta)) d\theta  \\
& =\int_{B_n(\theta^*,\varepsilon)} \log\left[  \frac{(1+O(D(\theta)\sigma^\beta))}{\bbP_\theta[B_n(\theta^*,\varepsilon)] }\right ] \frac{f_0(\theta)}{\bbP_\theta[B_n(\theta^*,\varepsilon)]}(1+O(D(\theta)\sigma^\beta)) d\theta  \\ 
    &\leq  K_\beta(\sigma)\log \left [ \frac{K_\beta(\sigma)}{\bbP_\theta(B_n(\theta^*,\varepsilon))} \right] \int_{B_n(\theta^*,\varepsilon)}\frac{f_0(\theta)}{\bbP_\theta[B_n(\theta^*,\varepsilon)]} d\theta \\
    &=  K_\beta(\sigma)\log \left [ \frac{K_\beta(\sigma)}{\bbP_\theta(B_n(\theta^*,\varepsilon))} \right].
\end{align*}
\normalsize
 Combining the bounds from both parts, we have with probability at least $1-1\slash [(D-1)^2K_\beta(\sigma)n\varepsilon^2]$ that 
\begin{align*}
    \Psi(q_{\widetilde{f}_\beta,\sigma})\leq  DK_\beta(\sigma)n\varepsilon^2 + \alpha^{-1} K_\beta(\sigma)\log K_\beta(\sigma)  + \alpha^{-1} K_\beta(\sigma)\log\left\{ \bbP_\theta[B_n(\theta^*,\varepsilon)]^{-1}\right \}. 
\end{align*}
Choosing $\zeta=1\slash [(D-1)^2K_\beta(\sigma)n\varepsilon^2]$. It follows from the union bound for probabilities, we have with probability at least $1-2\slash [(D-1)^2K_\beta(\sigma)n\varepsilon^2]$ that 
\small
\begin{align*}
 &\int \frac{1}{n}D_{\alpha}^{(n)}(\theta,\theta^*)\widehat{q}_{\mu,\sigma}(\theta)d\theta \leq \\
 &\frac{\alpha  DK_\beta(\sigma)n\varepsilon^2 + K_\beta(\sigma)\log K_\beta(\sigma) + K_\beta(\sigma)\log\left\{ \bbP_\theta[B_n(\theta^*,\varepsilon)]^{-1}\right \}  +\log((D-1)^2K_\beta(\sigma)n\varepsilon^2)}{n(1-\alpha)}\\
 & \leq K_\beta(\sigma) \left(\frac{D\alpha  }{1-\alpha}\varepsilon^2 + \frac{1}{n(1-\alpha)}\log\left\{ \bbP_\theta[B_n(\theta^*,\varepsilon)]^{-1}\right \} +O(n^{-1}) \right).
\end{align*}
\normalsize
Recall that $K_\beta(\sigma)=1+O(\sigma^\beta)$. Choosing $\sigma=n^{-2\slash \beta}$ gives 
\begin{align*}
 \int \frac{1}{n}D_{\alpha}^{(n)}(\theta,\theta^*)\widehat{q}_{\mu,\sigma}(\theta)d\theta &\leq K_\beta(\sigma) \left(\frac{D\alpha  }{1-\alpha}\varepsilon^2 + \frac{1}{n(1-\alpha)}\log\left\{ \bbP_\theta[B_n(\theta^*,\varepsilon)]^{-1}\right \} +O(n^{-1}) \right)\\
 &\leq \frac{D\alpha  }{1-\alpha}\varepsilon^2 + \frac{1}{n(1-\alpha)}\log\left\{ \bbP_\theta[B_n(\theta^*,\varepsilon)]^{-1}\right \} + O(n^{-1}) + O(n^{-2}).
\end{align*}
\end{proof}

\textbf{Corollary 4.1. }Suppose the prior density $p_\theta$ satisfies Assumption {\bf A1} and  $\widehat{q}$ satisfies \eqref{avb.obj}. It holds with probability tending to one as $n\to \infty$ that,
\begin{align*}
    \left\{\int h^2(p(\cdot\mid \theta), p(\cdot\mid \theta^*)) \widehat{q}_{\mu,\sigma}(\theta)d\theta\right\}^{1\slash 2} \leq  O(n^{-1}),
\end{align*}
demonstrating that the risk bound is parametric even when a flexible class of variational approximation is used. 
\begin{proof}
For IID data $n^{-1}D_{\alpha}^{(n)}(\theta,\theta^*)=D_\alpha[p_\theta||p_{\theta^*}]$. Applying Theorem 4.2 with $\varepsilon=n^{-1}$ and Assumption {\bf A1} yields,
\begin{align*}
   \int \frac{1}{n}D_{\alpha}^{(n)}(\theta,\theta^*)\widehat{q}_{\mu,\sigma}(\theta)d\theta &\leq \frac{D\alpha }{1-\alpha}\varepsilon^2 + \frac{1}{n(1-\alpha)}\log\left\{ \bbP_\theta[B_n(\theta^*,\varepsilon)]^{-1}\right \} + O(n^{-1})\\
   &\leq \frac{D\alpha-1 }{n^2(1-\alpha)} + O(n^{-1}) = O(n^{-2}) + O(n^{-1}). 
\end{align*}
Combining the above with the fact that $\max\{1,(1-\alpha)^{-1}\alpha\}h^2(p,q)\leq D_\alpha[p||q]$ competes the proof.
\end{proof}

\section{Auxiliary results}\label{sec:auxiliary}

In this section, we summarize results used in the proofs of main theorems in the main document. First to guarantee that the model \eqref{eq:nl-lvm} leads to the optimal rate of convergence, we start from deriving sharp bounds for the Hellinger distance between $f_{\mu_1, \sigma_1}$ and $f_{\mu_2, \sigma_2}$ for $\mu_1, \mu_2 \in C[0, 1]$ and $\sigma_1, \sigma_2 > 0$. We summarize the result in the following Lemma \ref{lem:hellinger}.
\begin{lem}\label{lem:hellinger}
For $\mu_1, \mu_2 \in C[0, 1]$ and $\sigma_1, \sigma_2 > 0$,
\begin{eqnarray}
h^2(f_{\mu_1,\sigma_1}, f_{\mu_2, \sigma_2}) \leq 1- \sqrt{\frac{2\sigma_1\sigma_2}{\sigma_1^2 + \sigma_2^2}}\exp\bigg\{-\frac{\norm{\mu_1 - \mu_2}_{\infty}^2}{4(\sigma_1^2 + \sigma_2^2)}\bigg\}.
\end{eqnarray}
\end{lem}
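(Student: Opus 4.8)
The plan is to reduce the statement to a lower bound on the Hellinger affinity $\rho(p,q)=\int\sqrt{pq}\,d\lambda$, since the squared Hellinger distance is an affine, decreasing function of $\rho$; concretely it suffices to prove
$$
\rho(f_{\mu_1,\sigma_1},f_{\mu_2,\sigma_2})\;\ge\;\sqrt{\frac{2\sigma_1\sigma_2}{\sigma_1^2+\sigma_2^2}}\;\exp\!\left\{-\frac{\norm{\mu_1-\mu_2}_\infty^2}{4(\sigma_1^2+\sigma_2^2)}\right\},
$$
and then substitute this back into the identity relating $h^2$ and $\rho$.

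The key point is that, by \eqref{eq:dpgp}, both densities are Gaussian mixtures over the \emph{same} mixing measure, the Lebesgue measure $\widetilde\lambda$ on $[0,1]$: $f_{\mu_j,\sigma_j}(y)=\int_0^1\phi_{\sigma_j}(y-\mu_j(x))\,dx$ for $j=1,2$. First I would apply the Cauchy--Schwarz inequality in $L^2([0,1],\widetilde\lambda)$ to the nonnegative functions $x\mapsto\sqrt{\phi_{\sigma_1}(y-\mu_1(x))}$ and $x\mapsto\sqrt{\phi_{\sigma_2}(y-\mu_2(x))}$, which gives, for every fixed $y\in\mathbb{R}$,
$$
\int_0^1\sqrt{\phi_{\sigma_1}(y-\mu_1(x))\,\phi_{\sigma_2}(y-\mu_2(x))}\,dx\;\le\;\sqrt{f_{\mu_1,\sigma_1}(y)\,f_{\mu_2,\sigma_2}(y)}.
$$
Integrating in $y$ and interchanging the order of integration (Tonelli applies since all integrands are nonnegative) gives
$$
\rho(f_{\mu_1,\sigma_1},f_{\mu_2,\sigma_2})\;\ge\;\int_0^1\rho\big(N(\mu_1(x),\sigma_1^2),\,N(\mu_2(x),\sigma_2^2)\big)\,dx,
$$
i.e. the affinity between the two mixtures dominates the average over $x$ of the fibrewise Gaussian affinities.

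Next I would insert the classical closed form $\rho\big(N(m_1,\sigma_1^2),N(m_2,\sigma_2^2)\big)=\sqrt{2\sigma_1\sigma_2/(\sigma_1^2+\sigma_2^2)}\,\exp\{-(m_1-m_2)^2/(4(\sigma_1^2+\sigma_2^2))\}$, which follows in one line by completing the square inside the Gaussian integral. Because $(\mu_1(x)-\mu_2(x))^2\le\norm{\mu_1-\mu_2}_\infty^2$ for every $x\in[0,1]$ and $t\mapsto e^{-t}$ is decreasing, each integrand is bounded below by the constant appearing on the right-hand side of the target display; integrating that constant over $[0,1]$ reproduces it, which proves the affinity bound and hence the lemma.

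I do not expect a genuine obstacle: the argument is a short convexity observation together with a Gaussian moment computation. The two places that deserve a line of care are (i) noting that it is precisely the \emph{common} mixing measure that makes the affinity of the mixtures bounded \emph{below} by the average of the component affinities --- for mixtures over different measures the inequality would go the wrong way --- and (ii) that the final crude step $(\mu_1(x)-\mu_2(x))^2\le\norm{\mu_1-\mu_2}_\infty^2$ is what turns the $x$-dependent integrand into the clean sup-norm bound in the statement; keeping $\int_0^1\exp\{-(\mu_1(x)-\mu_2(x))^2/(4(\sigma_1^2+\sigma_2^2))\}\,dx$ instead yields a strictly sharper variant that can be substituted wherever Lemma~\ref{lem:hellinger} is used.
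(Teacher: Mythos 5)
Your proposal is correct and matches the paper's proof in essence: both reduce to the Cauchy--Schwarz (Hölder) bound on $\sqrt{f_{\mu_1,\sigma_1}f_{\mu_2,\sigma_2}}$ from below by the fibrewise integrand, interchange the order of integration by Tonelli/Fubini, invoke the closed form for the Gaussian Hellinger affinity, and finally replace $(\mu_1(x)-\mu_2(x))^2$ by $\norm{\mu_1-\mu_2}_\infty^2$. The only cosmetic difference is that you phrase the argument via the affinity $\rho$ and then convert back through $h^2=2-2\rho$, while the paper carries the squared Hellinger distance throughout; the sharper integrated variant you note at the end is indeed available but not used in the paper.
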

\begin{proof}
Note that by H\"{o}lder's inequality,
\begin{align*}
f_{\mu_1, \sigma_1}(y)f_{\mu_2, \sigma_2}(y) \geq \bigg\{\int_{0}^{1} \sqrt{\phi_{\sigma_1}(y - \mu_1(x))} \sqrt{\phi_{\sigma_2}(y - \mu_2(x))}dx\bigg\}^2.
\end{align*}
Hence,
\begin{align*}
h^2(f_{\mu_1,\sigma_1}, f_{\mu_2, \sigma_2}) &\leq \int\bigg[\int_{0}^{1}\phi_{\sigma_1}(y-\mu_1(x))dx  + \int_{0}^{1}\phi_{\sigma_2}(y-\mu_2(x))dx \\
&~~~-2\int_{0}^{1}\sqrt{\phi_{\sigma_1}(y - \mu_1(x))} \sqrt{\phi_{\sigma_2}(y - \mu_2(x))}dx\bigg]dy.
\end{align*}
By changing the order of integration (applying Fubini's theorem since the function within the integral is jointly integrable) we get
\begin{align*}
h^2(f_{\mu_1,\sigma_1}, f_{\mu_2, \sigma_2}) &\leq \int_{0}^{1}h^2(f_{\mu_1(x),\sigma_1}, f_{\mu_2(x), \sigma_2})dx \\
&= \int_{0}^{1}\bigg[1- \sqrt{\frac{2\sigma_1\sigma_2}{\sigma_1^2 + \sigma_2^2}}\exp\bigg\{-\frac{(\mu_1(x) - \mu_2(x))^2}{4(\sigma_1^2 + \sigma_2^2)}\bigg\}\bigg]dx\\
&\leq 1- \sqrt{\frac{2\sigma_1\sigma_2}{\sigma_1^2 + \sigma_2^2}}\exp\bigg\{-\frac{\norm{\mu_1 - \mu_2}_{\infty}^2}{4(\sigma_1^2 + \sigma_2^2)}\bigg\}.
\end{align*}

\end{proof}

\begin{remark}
When $\sigma_1 = \sigma_2  = \sigma$, $h^2(f_{\mu_1,\sigma}, f_{\mu_2, \sigma}) \leq 1 - \exp\big\{\norm{\mu_1 -\mu_2}_{\infty}^2/ 8 \sigma^2\big\}$, which implies that $h^2(f_{\mu_1,\sigma}, f_{\mu_2, \sigma}) \precsim \norm{\mu_1 -\mu_2}_{\infty}^2/\sigma^2$.
\end{remark}

\begin{remark}
The standard inequality $h^2(f_{\mu_1,\sigma_1}, f_{\mu_2, \sigma_2}) \leq \norm{f_{\mu_1,\sigma_1}- f_{\mu_2, \sigma_2}}_1$ relating the Hellinger distance to the total variation distance leads to the cruder bound
\begin{eqnarray*}
h^2(f_{\mu_1,\sigma_1}, f_{\mu_2, \sigma_2}) \leq C_1 \frac{\norm{\mu_1 -\mu_2}_{\infty}}{(\sigma_1 \wedge \sigma_2)} + C_2\frac{|\sigma_2 - \sigma_1|}{(\sigma_1 \wedge \sigma_2)},
 \end{eqnarray*}
which is linear in $\norm{\mu_1 -\mu_2}_{\infty}$. This bound is less sharp than what is obtained in Lemma \ref{lem:hellinger} and does not suffice for obtaining the optimal rate of convergence.
\end{remark}

In order to apply Lemma 8 in \cite{ghosal2007posterior} to control the Kullback--Leibler divergence between the true density $f_0$ and the model $f_{\mu, \sigma}$, we derive an upper bound for $\log \norm{f_0/f_{\mu, \sigma}}_{\infty}$ in Lemma \ref{lem:logsup}.
\begin{lem}\label{lem:logsup}
If $f_0$ satisfies Assumption \textbf{F2},
\begin{eqnarray}
\log \bigg\|\frac{f_0}{f_{\mu, \sigma}}\bigg\|_{\infty} \leq C + \frac{\norm{\mu - \mu_0}_{\infty}^2}{\sigma^2}
\end{eqnarray}
for some constant $C > 0$.
\end{lem}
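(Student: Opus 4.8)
The plan is to derive a uniform-in-$y$ pointwise lower bound $f_{\mu,\sigma}(y)\ge c(\sigma)\,e^{-\norm{\mu-\mu_0}_\infty^2/\sigma^2}f_0(y)$ and then take logarithms and a supremum. Write $\delta=\norm{\mu-\mu_0}_\infty$. Since $\mu_0=F_0^{-1}$ is monotone, the induced measure $\nu_{\mu_0}$ has density $f_0$, so the change-of-variables identity \eqref{eq:dpgp} gives, for any bandwidth $\tau>0$, $\int_0^1\phi_\tau(y-\mu_0(x))\,dx=(\phi_\tau*f_0)(y)$; this is the bridge that lets Assumption \textbf{F2} enter.

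The key estimate compares $\phi_\sigma(y-\mu(x))$ with a kernel centered at $\mu_0(x)$ but with a slightly inflated bandwidth, the inflation being what absorbs the offending cross term. By the triangle inequality $|y-\mu(x)|\le|y-\mu_0(x)|+\delta$, and then $2\delta|y-\mu_0(x)|\le(y-\mu_0(x))^2+\delta^2$ gives
\begin{align*}
(y-\mu(x))^2\le 2(y-\mu_0(x))^2+2\delta^2 .
\end{align*}
Plugging this into the Gaussian density and using the identity $(2\pi\sigma^2)^{-1/2}\exp\{-(y-\mu_0(x))^2/\sigma^2\}=2^{-1/2}\phi_{\sigma/\sqrt2}(y-\mu_0(x))$ yields the pointwise bound $\phi_\sigma(y-\mu(x))\ge 2^{-1/2}e^{-\delta^2/\sigma^2}\phi_{\sigma/\sqrt2}(y-\mu_0(x))$. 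Integrating over $x\in[0,1]$ and invoking the bridge identity,
\begin{align*}
f_{\mu,\sigma}(y)\ge \frac{e^{-\delta^2/\sigma^2}}{\sqrt2}\,(\phi_{\sigma/\sqrt2}*f_0)(y).
\end{align*}

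To close, apply the consequence of Assumption \textbf{F2} noted after its statement: for each fixed $\bar\sigma>0$ there is $C_0>0$ with $\phi_\tau*f_0\ge C_0 f_0$ pointwise whenever $\tau<\bar\sigma$. Taking $\tau=\sigma/\sqrt2$ (legitimate once $\sigma<\sqrt2\,\bar\sigma$, which is the only regime that matters in the applications, where $\sigma$ is driven to $0$) gives $f_{\mu,\sigma}(y)\ge(C_0/\sqrt2)e^{-\delta^2/\sigma^2}f_0(y)$ for all $y$. Rearranging, taking the supremum over $y\in\mathrm{supp}(f_0)$ (the ratio vanishes off the support, since $f_{\mu,\sigma}>0$ everywhere), and taking logarithms produces $\log\norm{f_0/f_{\mu,\sigma}}_\infty\le\log(\sqrt2/C_0)+\delta^2/\sigma^2$, i.e., the claimed bound with $C=\max\{\log(\sqrt2/C_0),1\}$.

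No step is genuinely hard. The one subtlety is the cross term $2\delta|y-\mu_0(x)|$ in $(y-\mu(x))^2$, which has no uniform-in-$y$ bound on its own; the routine fix is to absorb it by passing to the enlarged bandwidth $\sigma/\sqrt2$ at the harmless price of the factor $1/\sqrt2$, which preserves a convolution-against-$f_0$ structure so that Assumption \textbf{F2} applies verbatim. A second, purely bookkeeping, point is that the inequality is meaningful only for $\sigma$ bounded above, and it is so bounded wherever this lemma is invoked in the sequel.
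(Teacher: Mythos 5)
Your proof is correct and follows essentially the same route as the paper: you lower-bound $\phi_\sigma(y-\mu(x))$ by $(a+b)^2\le 2a^2+2b^2$ (equivalently the triangle-plus-AM-GM step you spell out), recognize the resulting integral as a scalar multiple of $\phi_{\sigma/\sqrt2}*f_0$ via the change-of-variables identity, and then invoke the consequence of Assumption~\textbf{F2} (the paper cites Lemma~6 of \citealp{ghosal2007posterior}) to get $\phi_{\sigma/\sqrt2}*f_0\ge C_0 f_0$. Your explicit remark that this holds only for $\sigma$ bounded above is a harmless clarification; the paper leaves it implicit by appealing to the discussion following Assumption~\textbf{F2}.
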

\begin{proof}
Note that
\begin{align*}
f_{\mu, \sigma}(y) &= \frac{1}{\sqrt{2\pi}\sigma}\int_{0}^{1}\exp\bigg\{-\frac{(y-\mu(x))^2}{2\sigma^2}\bigg\}dx\\
&\geq  \frac{1}{\sqrt{2\pi}\sigma}\int_{0}^{1}\exp\bigg\{-\frac{(y-\mu_0(x))^2}{\sigma^2}\bigg\}dx \exp\bigg\{-\frac{\norm{\mu-\mu_0}_{\infty}^2}{\sigma^2}\bigg\}\\
&\geq C \phi_{\sigma/\sqrt{2}} * f_0 (y) \exp\bigg\{-\frac{\norm{\mu-\mu_0}_{\infty}^2}{\sigma^2}\bigg\}\\
&\geq C f_0(y) \exp\bigg\{-\frac{\norm{\mu-\mu_0}_{\infty}^2}{\sigma^2}\bigg\},
\end{align*}
where the last inequality follows from Lemma 6 of \cite{ghosal2007posterior} since $f_0$ is compactly supported by Assumption \textbf{F2}. This provides the desired inequality. 
\end{proof}
\begin{lem}\label{lem:approx}
Let $j\ge 0$ be the integer such that $\beta \in (2j, 2j+2]$, and the sequence of $f_j$ is constructed by the procedure in \eqref{eq:construction}.
Then we have $f_\beta = \sum_{i=0}^{j}(-1)^i {j+1 \choose i+1} \phi_{\sigma}^{(i)}* f_0$, where $\phi_{\sigma}^{(i)}* f_0 = \phi_{\sigma}*\cdots *\phi_{\sigma}*f_0$, the $i$-fold convolution of $\phi_{\sigma}$ with $f_0$.
\end{lem}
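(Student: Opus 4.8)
The plan is to prove the identity by induction on $j$, reading the recursion \eqref{eq:construction} as an inhomogeneous linear recursion in the convolution operator. Write $K$ for the operator $Kg=\phi_\sigma*g$, so that $\phi_\sigma^{(i)}*f_0=K^i f_0$ under the convention $K^0 f_0=f_0$, and rewrite the recursion as
\[
f_{j+1}=f_0-\triangle_\sigma f_j=f_0+f_j-Kf_j .
\]
The base case $j=0$ is immediate, since the claimed formula reads $f_0=\binom{1}{1}K^0 f_0=f_0$.

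For the inductive step, assume $f_j=\sum_{i=0}^{j}(-1)^i\binom{j+1}{i+1}K^i f_0$. Substituting this into $f_{j+1}=f_0+f_j-Kf_j$ and shifting the index by one in $Kf_j=\sum_{i=0}^{j}(-1)^i\binom{j+1}{i+1}K^{i+1}f_0$, I obtain
\[
f_{j+1}=f_0+\sum_{i=0}^{j}(-1)^i\binom{j+1}{i+1}K^i f_0+\sum_{i=1}^{j+1}(-1)^{i}\binom{j+1}{i}K^{i}f_0 .
\]
It then remains only to collect the coefficient of each $K^i f_0$: for $i=0$ the contribution is $1+\binom{j+1}{1}=j+2=\binom{j+2}{1}$; for $1\le i\le j$ it is $(-1)^i\big[\binom{j+1}{i+1}+\binom{j+1}{i}\big]=(-1)^i\binom{j+2}{i+1}$ by Pascal's rule; and for $i=j+1$ it is $(-1)^{j+1}=(-1)^{j+1}\binom{j+2}{j+2}$. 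This yields $f_{j+1}=\sum_{i=0}^{j+1}(-1)^i\binom{j+2}{i+1}K^i f_0$, closing the induction, and the statement follows upon recalling $f_\beta=f_j$.

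There is no serious obstacle here; the only point requiring care is that the recursion is affine rather than linear, so the inhomogeneous term $f_0$ must be tracked separately — it is precisely this term that supplies the extra unit needed to promote $\binom{j+1}{1}$ to $\binom{j+2}{1}$ in the $i=0$ slot, after which Pascal's identity handles every remaining coefficient uniformly. One could alternatively bypass the induction by observing that a fixed point of $g\mapsto f_0+(I-K)g$ would formally be $K^{-1}f_0$ and expanding $f_j$ as a truncated Neumann-type series in $(I-K)$, but the direct induction above is cleaner and fully rigorous.
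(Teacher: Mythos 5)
Your proof is correct and takes essentially the same route as the paper: induction on $j$, rewriting the recursion $f_{j+1}=f_0+f_j-\phi_\sigma*f_j$, shifting the index in the convolved sum, and closing with Pascal's rule, with only cosmetic differences (you start the induction at $j=0$ rather than $j=1$, and you phrase things in terms of the operator $K$). No gap.
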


\begin{proof}
Consider $f_j$ constructed by (\ref{eq:construction}). When $j=1, f_1 = 2f_{0} - \phi_{\sigma}*f_0$, so the form holds. 
By induction, suppose this form holds for $j > 1$, then 
\begin{align*}
f_{j+1} &= f_0 - (\phi_{\sigma}*f_j - f_{j}) \\
&= f_0 +  \sum_{i=0}^{j}(-1)^{i+1} {j+1 \choose i+1} \phi_{\sigma}^{(i+1)}* f_0 + \sum_{i=0}^{j}(-1)^i {j+1 \choose i+1} \phi_{\sigma}^{(i)}* f_0 \\
&= (j+2)f_0 +  \sum_{i=1}^{j+1}(-1)^{i} {j+1 \choose i+1} \phi_{\sigma}^{(i)}* f_0 +  \sum_{i=1}^{j}(-1)^{i} {j+1 \choose i} \phi_{\sigma}^{(i)}* f_0\\
&= (j+2)f_0  + \sum_{i=1}^{j} (-1)^{i} \bigg( {{j+1 \choose i+1} + {j+1 \choose i}} \bigg) \phi_{\sigma}^{(i)}*f_0 + (-1)^{j+1} \phi_{\sigma}^{(i +1)}*f_0 \\
&= (j+2)f_0 + \sum_{i=1}^{j}(-1)^{i} {j+2 \choose i+1} \phi_{\sigma}^{(i)}* f_0 + (-1)^{j+1} \phi_{\sigma}^{(i +1)}*f_0\\
&= \sum_{i=0}^{j+1}(-1)^{i} {j+2 \choose i+1} \phi_{\sigma}^{(i)}* f_0.
\end{align*}
It holds for $j +1$, which completes the proof.
\end{proof}

\begin{lem}\label{lem:tail}
Let $f_0$ satisfy Assumptions \textbf{F1} and \textbf{F2}. With $A_\sigma = \{x: f_0(x) \ge \sigma^{H}\}$, we have
\begin{eqnarray}\label{eq:tail} 
\int_{A^c_\sigma}f_0(x)dx = O(\sigma^{2\beta}), \  \   \int_{A^c_\sigma} \phi_{\sigma}*f_{j}(x) dx = O(\sigma^{2\beta}),
\end{eqnarray}
for all non-negative integer $j$, sufficiently small $\sigma$ and sufficiently large $H$.
\end{lem}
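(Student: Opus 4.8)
The plan is to establish the two estimates in \eqref{eq:tail} in turn, deriving the second from the first together with the explicit expansion of $f_j$ furnished by Lemma \ref{lem:approx}.

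For the first estimate I would use Assumption \textbf{F2} directly: $f_0$ is supported on $[0,1]$, non-decreasing on $[0,a]$, bounded below by a constant $c_0>0$ on $[a,b]$, and non-increasing on $[b,1]$. For $\sigma$ small enough that $\sigma^{H}<c_0$ we have $A_\sigma^c\cap[a,b]=\emptyset$, and by monotonicity the set $A_\sigma^c\cap[0,1]$ is a union of two subintervals abutting the endpoints, $[0,p_\sigma]\cup[q_\sigma,1]$, on each of which $f_0<\sigma^{H}$. Hence $\int_{A_\sigma^c}f_0\,d\lambda=\int_{A_\sigma^c\cap[0,1]}f_0\,d\lambda\le\sigma^{H}$, which is $O(\sigma^{2\beta})$ once $H\ge 2\beta$ and $\sigma<1$.

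For the second estimate, Lemma \ref{lem:approx} gives $\phi_\sigma*f_j=\sum_{i=0}^{j}(-1)^i\binom{j+1}{i+1}\,\phi_\sigma^{(i+1)}*f_0$, and since the $(i{+}1)$-fold Gaussian convolution satisfies $\phi_\sigma^{(i+1)}*f_0=\phi_{\sqrt{i+1}\,\sigma}*f_0$, and $j$ is fixed so that the binomial coefficients are bounded, it suffices by the triangle inequality to bound $\int_{A_\sigma^c}\phi_{\tau}*f_0\,d\lambda$ uniformly over $\sigma\le\tau\le\sqrt{j+1}\,\sigma$. By Fubini's theorem this integral equals $\int f_0(t)\,\gamma_\tau\big(A_\sigma^c-t\big)\,dt$, with $\gamma_\tau$ the $N(0,\tau^2)$ law; I would split the $t$-integral over $A_\sigma$ and over $A_\sigma^c$. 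On $\{t\in A_\sigma^c\}$ the integrand is at most $f_0(t)$, so that piece is $\le\int_{A_\sigma^c}f_0\,d\lambda=O(\sigma^{H})$ by the first estimate. On $\{t\in A_\sigma\}$ I would use that $A_\sigma^c$ is confined near the endpoints of $[0,1]$ to bound $\gamma_\tau\big(A_\sigma^c-t\big)$ by the sub-Gaussian tail $2\exp\{-\mathrm{dist}(t,A_\sigma^c)^2/(2\tau^2)\}$ and integrate this against $f_0$, using monotonicity to lower-bound $\mathrm{dist}(t,A_\sigma^c)$ in terms of $p_\sigma$ and $1-q_\sigma$.

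The main obstacle is precisely this last piece: for $t\in A_\sigma$ lying near $0$ or $1$ the quantity $\mathrm{dist}(t,A_\sigma^c)$ is small and the sub-Gaussian factor does not by itself decay, so one must control the mass of $\phi_\tau*f_0$ in a shrinking neighbourhood of the endpoints. The way through is to invoke Assumptions \textbf{F1}--\textbf{F2} to quantify how slowly $f_0$ can decay near the endpoints --- equivalently, a lower bound on $p_\sigma$ and on $1-q_\sigma$ in terms of $\sigma^{H}$ --- and then take $H$ large enough that $p_\sigma$ and $1-q_\sigma$ dominate $\tau\sqrt{\log(1/\sigma)}$, so that the near-endpoint contribution is pushed below $\sigma^{2\beta}$. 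Choosing $H$ to render the $O(\sigma^{H})$ term and this endpoint term simultaneously $O(\sigma^{2\beta})$ is the only delicate point; the Fubini reduction and the Gaussian tail bounds are routine.
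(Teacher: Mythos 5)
Your argument for the first estimate is correct and essentially identical to the paper's: on $A_\sigma^c\cap[0,1]$ one has $f_0\le\sigma^H$, the set has Lebesgue measure at most $1$, and taking $H\ge 2\beta$ finishes it.

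For the second estimate you have taken a genuinely different route, and it contains a gap that you acknowledge but do not close. Decomposing $\phi_\sigma*f_j$ via Lemma \ref{lem:approx} into a bounded linear combination of $\phi_{\sqrt{i}\,\sigma}*f_0$, then using Fubini and Gaussian tails, does reduce matters to controlling $\int_{A_\sigma}f_0(t)\,\gamma_\tau(A_\sigma^c-t)\,dt$. But the sub-Gaussian factor is useless for $t$ near the boundary of $A_\sigma$, so the near-endpoint piece requires a quantitative lower bound on the width of $A_\sigma$ near $0$ and $1$ relative to $\sigma$, i.e.\ an upper bound on how fast $f_0$ can fall off at the endpoints. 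This is not supplied by \textbf{F1}--\textbf{F2} alone in any straightforward way, and your sketch leaves it as an open ``delicate point.'' As written, the proposal does not establish the second estimate.

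The paper's proof avoids all of this by reading the conclusion off Proposition \ref{approximation}: for $x\in[0,1]$ one has the \emph{pointwise multiplicative} bound $\phi_\sigma*f_\beta(x)=f_0(x)\bigl(1+D(x)\,O(\sigma^\beta)\bigr)$ with $D$ bounded on $[0,1]$ (the $l_j$ are continuous on the compact interval), hence $\phi_\sigma*f_\beta\precsim f_0$ on $A_\sigma^c\cap[0,1]$ uniformly for small $\sigma$, and the second integral is dominated by a constant multiple of the first. No Fubini, no Gaussian tails, and no delicate endpoint analysis is needed. You should look for the multiplicative approximation already proved in Proposition \ref{approximation} rather than re-expanding the convolution; it is precisely the device that makes the endpoint problem disappear.
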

\begin{proof}
Under Assumption \textbf{F2} there exists $(a, b) \subset [0, 1]$ such that ${A^c_\sigma} \subset [0, a) \cup (b, 1]$ if we choose $\sigma$ sufficiently small, so that $f_0 (x) \le \sigma^{H}$ for $x \in {A^c_\sigma}$. Therefore, $\int_{A^c_\sigma}f_0(x) \le \sigma^{H} \le O(\sigma^{2\beta})$ if we choose $H \ge 2\beta$. Using Proposition  \ref{approximation}, 
\begin{eqnarray*}
\int_{A^c_\sigma} \phi_{\sigma}* f_j(x) dx =  \int_{A^c_\sigma} f_0(x) \{1 + O(D(x)\sigma^{\beta})\} \le O(\sigma^{H}).
\end{eqnarray*}
With bounded $D(x)$ and $H \ge 2\beta$ it is easy to bound the second integral in \eqref{eq:tail} by $O(\sigma^{2\beta})$.
\end{proof}

\begin{lem}\label{lem:density}
Suppose $f_0$ satisfies Assumptions \textbf{F1} and \textbf{F2}. For $\beta >2$ and the integer $j$ such that $\beta \in (2j,2j+2]$, $f_\beta$ is a density function. 
\end{lem}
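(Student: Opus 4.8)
The plan is to establish the two defining properties of a probability density for $f_\beta=f_j$ — unit total mass and non-negativity — starting from the explicit closed form of Lemma~\ref{lem:approx}, $f_\beta=\sum_{i=0}^{j}(-1)^i\binom{j+1}{i+1}\phi_\sigma^{(i)}*f_0$, where $\phi_\sigma^{(0)}*f_0=f_0$ and for $i\ge1$ the term $\phi_\sigma^{(i)}*f_0=\phi_{\sqrt i\,\sigma}*f_0$ is itself a probability density. The unit-mass claim is then purely combinatorial: each summand integrates to one, so $\int f_\beta\,d\lambda=\sum_{i=0}^{j}(-1)^i\binom{j+1}{i+1}$, and reindexing by $k=i+1$ turns this into $-\sum_{k=1}^{j+1}(-1)^k\binom{j+1}{k}=-\big(\sum_{k=0}^{j+1}(-1)^k\binom{j+1}{k}-1\big)=1$, using that the full alternating binomial sum vanishes.

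For non-negativity on the interior of $\mathrm{supp}(f_0)$, I would reuse the pointwise log-scale Taylor expansion behind Proposition~\ref{approximation}: applying it to each $\phi_{\sqrt i\,\sigma}*f_0$ (the identical computation with bandwidth $\sqrt i\,\sigma$) and then invoking the cancellation of the low-order $\sigma^2,\sigma^4,\dots$ contributions guaranteed by the combinatorial identity underlying Lemma~\ref{lem:approx}, one obtains a factorization $f_\beta(x)=f_0(x)\big(1+R_\sigma(x)\big)$ with $\sup_x|R_\sigma(x)|\to0$ as $\sigma\to0$; the uniformity is supplied by Assumption~\textbf{F1}, since the derivatives $l_1,\dots,l_r$ of $\log f_0$ are continuous, hence bounded, on the compact interval. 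Assumption~\textbf{F1} also forces $\log f_0$ to be finite and continuous on a compact set, so $f_0$ is bounded away from zero there; consequently $1+R_\sigma(x)>0$ for all sufficiently small $\sigma$ and $f_\beta\ge0$ on $\mathrm{supp}(f_0)$. Requiring $\sigma$ small is innocuous, as throughout the contraction analysis $\sigma$ is localized to $[\sigma_n,2\sigma_n]$ with $\sigma_n\to0$.

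The step I expect to be the real obstacle is the behaviour of $f_\beta$ near and just outside the endpoints of $[0,1]$, where $f_0$ degenerates while the Gaussian convolutions $\phi_{\sqrt i\,\sigma}*f_0$ redistribute a vanishing amount of mass, so that the factorization $f_0\cdot(1+R_\sigma)$ is too crude to read the sign off term by term. This tail regime is precisely what Assumption~\textbf{F2} (compact support plus the monotone-tail structure) and the tail bound of Lemma~\ref{lem:tail} are tailored to: one uses them to show the residual signed low-order pieces are uniformly dominated by $f_0$, and are negligible further out, so that $f_\beta\ge0$ there as well. Combining the three ingredients shows $f_\beta$ is a non-negative function of unit mass, hence a density.
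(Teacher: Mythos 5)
Your overall strategy matches the paper's: both establish $\int f_\beta = 1$ by the alternating binomial identity (this is what the paper calls the ``simple calculation''), and both obtain non-negativity by exhibiting a uniform lower bound of the form $f_\beta(x)\ge f_0(x)\{1-o(1)\}$ with the $o(1)$ controlled by $\sigma$, so that $f_\beta\ge 0$ for $\sigma$ small (using $f_0\ge c>0$ on $[0,1]$, forced by Assumption~\textbf{F1}). You differ in organization: the paper works through the recursion \eqref{eq:construction} step by step, bounding $f_{i+1}>f_0(1-\rho_1-\cdots-\rho_{i+1})>f_0(1-j\rho_1)$ with each $\rho_i$ coming from the pointwise log-scale Taylor expansion of $\phi_\sigma*f_{i-1}$, whereas you propose reading the bound directly off the closed form of Lemma~\ref{lem:approx}. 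Both routes rest on the same expansion of each $\phi_{\sqrt{i}\sigma}*f_0$, so they are near-equivalent; yours is arguably cleaner once Lemma~\ref{lem:approx} is in hand, and it makes explicit the unit-mass computation the paper leaves as an aside.

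Two cautions on the details. First, your appeal to ``cancellation of the low-order $\sigma^2,\sigma^4,\dots$ contributions'' is misplaced: that cancellation, coming from $\sum_i(-1)^{i-1}\binom{j+1}{i}i^k=0$ for $1\le k\le j$, is the mechanism behind $\|\phi_\sigma*f_\beta-f_0\|_\infty=O(\sigma^\beta)$ in Proposition~\ref{approximation}; it does \emph{not} occur in $f_\beta$ itself, whose weights are $(-1)^i\binom{j+1}{i+1}$ and for which, e.g., $\sum_{i=0}^{j}(-1)^i\binom{j+1}{i+1}i\neq0$. What actually happens — and is all you need — is that the $\sigma^0$ coefficients collapse to $1$ while the remaining terms are each $O(\sigma^2)$ with coefficients bounded by Assumption~\textbf{F1}, giving $f_\beta=f_0(1+O(\sigma^2))$ uniformly on $[0,1]$, which is precisely the content of the paper's chained bound $f_j>f_0(1-j\rho_1)$ with $\rho_1=O(\sigma^2)$. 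Second, your concern about behaviour outside $[0,1]$ is legitimate — there $f_0$ vanishes while the Gaussian convolutions do not, so $f_\beta$ is an alternating sum that need not be nonnegative — but Lemma~\ref{lem:tail} bounds integrals, not sign, so it cannot by itself deliver pointwise non-negativity off $[0,1]$. Note the paper's own proof also never addresses this region and implicitly relies on the analytic extension of $f_0$ invoked in the proof of Proposition~\ref{approximation}; so you have correctly identified a real soft spot, but the remedy you suggest does not close it.
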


\begin{proof}
To show $f_\beta$ is a density function, it suffices to show $f_\beta$ is non-negative, since a simple calculation shows that $\int f_\beta = 1$ for $j\ge0$. Following the proof of Lemma 2 in \cite{kruijer2010adaptive}, we treat $\log f_0$ as a function in $C^2 [0,1]$ and obtain the same form of $\phi_{\sigma}*f_0$ as in \eqref{eq:approximation}. For small enough $\sigma$ we can find  $\rho_1 \in (0,1)$ very close to $0$ such that
\begin{eqnarray*}
 \phi_{\sigma}* f_0(x) = f_0(x) (1 + O(D^{(2)}(x)\sigma^2)) < f_0(x)(1 + \rho_1),
\end{eqnarray*}
 where $D^{(2)}$ contains $|l_1(x)|$ and $|l_2(x)|$ to certain power, so $D^{(2)}$ is bounded. Then we have
 \begin{eqnarray*}
 f_1(x) = 2f_0(x) - K_{\sigma} f_0(x) > 2f_0(x) - f_0(x) (1 + \rho_1) = f_0(x) (1 - \rho_1).
 \end{eqnarray*}
 Then we treat $\log{f_0}$ as a function with $\beta = 4$, $j = 1$. Similarly, we can get
 \begin{eqnarray*}
 \phi_{\sigma}* f_1(x) = f_0(x) (1 + O(D^{(4)}(x)\sigma^4)),
 \end{eqnarray*}
 where $D^{(4)}$ contains $|l_1(x)|, \dots, |l_4(x)|$. We can find $0 < \rho_2 < \rho_1$ such that $\phi_{\sigma}* f_1(x) < f_0(x) (1 + \rho_2)$, then can get
 \begin{eqnarray*}
 f_2 (x) = f_0(x) - (\phi_{\sigma}* f_1(x) - f_1(x)) > f_0(x)(1 - \rho_1 - \rho_2) > f_0(x)(1 - 2\rho_1).
 \end{eqnarray*}
 Continuing this procedure, we can get $f_j(x) > f_0(x) (1 - j\rho_1)$ with sufficiently small $\sigma$ and $1 - j\rho_1 \in (0,1)$ and it is close to 1. Then we show $f_j$ is non-negative. \\
\end{proof}

\begin{lem}\label{lem:KL}
 Let $f_0$ satisfy Assumptions \textbf{F1} and \textbf{F2} and let $j$ be the integer such that $\beta \in (2j, 2j+2]$. Then we show that the density $f_\beta$ obtained by (\ref{eq:construction}) satisfies \begin{eqnarray}\label{eq:KL}
\int f_0(x) \log{\frac{f_0(x)}{\phi_{\sigma}* f_\beta (x)}} = O(\sigma^{2\beta}),
\end{eqnarray}
for sufficiently small $\sigma$ and all $x \in [0,1]$.
\end{lem}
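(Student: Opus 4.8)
The plan is to follow the bulk/tail splitting strategy of Lemma~3 in \cite{kruijer2010adaptive}: decompose the integral over the ``bulk'' $A_\sigma=\{x:f_0(x)\ge \sigma^{H}\}$ (with $H\ge 2\beta$, as in Lemma~\ref{lem:tail}) and the ``tail'' $A_\sigma^c$, treating the bulk via the multiplicative approximation of Proposition~\ref{approximation} and the tail via crude pointwise bounds combined with Lemma~\ref{lem:tail}. Throughout I would write $\phi_\sigma*f_\beta(x)=f_0(x)\{1+R_\sigma(x)\}$ with $R_\sigma(x)=D(x)O(\sigma^\beta)$, where $D$ is the function from Proposition~\ref{approximation}; since the log-derivatives $l_1,\dots,l_r$ are continuous on the compact $[0,1]$ by Assumption~\textbf{F1}, $D$ is bounded, so $\|R_\sigma\|_\infty\le C\sigma^\beta$ and in particular $\tfrac12\le 1+R_\sigma(x)\le\tfrac32$ uniformly in $x$ once $\sigma$ is small enough.

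On the bulk I would use $\log\{f_0/(\phi_\sigma*f_\beta)\}=-\log(1+R_\sigma)$ together with the elementary inequality $-\log(1+u)\le -u+u^2$ (valid for $u>-\tfrac12$) to obtain
\begin{align*}
\int_{A_\sigma} f_0\log\frac{f_0}{\phi_\sigma*f_\beta}\;\le\;-\int_{A_\sigma}f_0R_\sigma+\int_{A_\sigma}f_0R_\sigma^2 .
\end{align*}
The quadratic term is $O(\sigma^{2\beta})\int f_0 D^2\le O(\sigma^{2\beta})\|D\|_\infty^2=O(\sigma^{2\beta})$. For the linear term, the key observation is that $f_0R_\sigma=\phi_\sigma*f_\beta-f_0$, so $\int_{A_\sigma}f_0R_\sigma=\int_{A_\sigma}(\phi_\sigma*f_\beta-f_0)$; since $f_\beta$ is a density (Lemma~\ref{lem:density}) and hence $\phi_\sigma*f_\beta$ integrates to $1$ by Fubini, $\int_{\mathbb R}(\phi_\sigma*f_\beta-f_0)=0$, whence $\int_{A_\sigma}(\phi_\sigma*f_\beta-f_0)=-\int_{A_\sigma^c}\phi_\sigma*f_\beta+\int_{A_\sigma^c}f_0$, which is $O(\sigma^{2\beta})$ by Lemma~\ref{lem:tail}. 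This turns an a priori $O(\sigma^\beta)$ contribution into an $O(\sigma^{2\beta})$ one, and is precisely where the iterative bias-correction defining $f_\beta$ (rather than plain $f_0$) is used.

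On the tail, since $\tfrac12\le 1+R_\sigma\le\tfrac32$ the integrand satisfies $\bigl|\log\{f_0/(\phi_\sigma*f_\beta)\}\bigr|=|\log(1+R_\sigma)|\le\log 2$ wherever $f_0>0$ (and the integrand vanishes where $f_0=0$), so $\bigl|\int_{A_\sigma^c}f_0\log\{f_0/(\phi_\sigma*f_\beta)\}\bigr|\le \log 2\cdot\int_{A_\sigma^c}f_0=O(\sigma^{2\beta})$, again by Lemma~\ref{lem:tail}. Adding the bulk and tail contributions gives the claimed $O(\sigma^{2\beta})$ upper bound, and the matching lower bound is automatic since $\int f_0\log\{f_0/(\phi_\sigma*f_\beta)\}$ is a genuine Kullback--Leibler divergence, hence nonnegative.

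I expect the only genuine obstacle to be the bulk linear term $\int_{A_\sigma}f_0R_\sigma$, which is naively only $O(\sigma^\beta)$; its resolution rests on the identity $f_0R_\sigma=\phi_\sigma*f_\beta-f_0$ and the vanishing of that difference's integral over $\mathbb R$, which reduces it to the tail estimates of Lemma~\ref{lem:tail}. Everything else is routine once Proposition~\ref{approximation}, Lemma~\ref{lem:density} and Lemma~\ref{lem:tail} are available.
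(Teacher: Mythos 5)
Your proof is correct, and it follows the same high-level strategy as the paper: split on the bulk $A_\sigma=\{f_0\geq\sigma^H\}$ and tail $A_\sigma^c$, reduce the bulk to a second-order quantity plus a tail-correction term, then invoke Lemma~\ref{lem:tail}. The execution differs in two minor but noteworthy ways. First, the paper bounds the bulk with the inequality $\int_A p\log(p/q)\leq\int_A(p-q)^2/q+\int_{A^c}(q-p)$ (the chi-square-type bound from Kruijer et al.\ / Shen--Tokdar--Ghosal), whereas you expand $-\log(1+R_\sigma)\leq -R_\sigma+R_\sigma^2$; this is essentially the same estimate but with $f_0$ rather than $\phi_\sigma*f_\beta$ in the denominator of the quadratic term. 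Second, where the paper establishes the needed lower bound $\phi_\sigma*f_\beta\geq Kf_0$ through Remark~3 of \cite{ghosal1999posterior} and Lemma~\ref{lem:density}, you get the analogous fact ($1+R_\sigma\geq1/2$) directly from boundedness of $D$ and the uniformity of Proposition~\ref{approximation}, which is shorter and internally consistent with the way $D$-boundedness is already used throughout. Both routes handle the linear term $\int_{A_\sigma}f_0R_\sigma$ by the identical integral-cancellation trick, reducing it to a tail integral bounded by Lemma~\ref{lem:tail}.

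One small misattribution: you write that the linear term ``is precisely where the iterative bias-correction defining $f_\beta$ (rather than plain $f_0$) is used.'' In fact the linear term is $O(\sigma^{2\beta})$ via Lemma~\ref{lem:tail} whether one uses $f_0$ or $f_\beta$, since it only needs the integrand to be a difference of densities; what the bias-correction buys is the \emph{quadratic} term, where $R_\sigma=O(\sigma^\beta)$ (rather than merely $O(\sigma^2)$ for plain $f_0$) is what yields $\int f_0R_\sigma^2=O(\sigma^{2\beta})$ when $\beta>2$. This does not affect the validity of the argument, only the narration.
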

\begin{proof}
Again consider the set $A_\sigma = \{x: f_0(x) \ge \sigma^{H}\}$ with arbitrarily large $H$. We separate the Kullback--Leibler divergence into
\begin{align}\label{integral}
\int_{[0, 1]} f_0 \log{\frac{f_0}{\phi_{\sigma}* f_\beta}} &= \int_{[0, 1]\cap A_{\sigma}} f_0 \log{\frac{f_0}{\phi_{\sigma}* f_\beta}} +  \int_{[0, 1]\cap A^c_{\sigma}} f_0 \log{\frac{f_0}{\phi_{\sigma}* f_\beta}} \nonumber \\
&\le \int_{A_\sigma} \frac{(f_0 - \phi_{\sigma}*f_\beta)^2}{\phi_{\sigma}* f_\beta} +  \int_{A^c_\sigma}(\phi_{\sigma}*f_\beta - f_0) +  \int_{A^c_\sigma} f_0 \log{\frac{f_0}{\phi_{\sigma}* f_\beta}}. 
\end{align}

Under Assumption \textbf{F2} and by Remark 3 in \cite{ghosal1999posterior}, for small enough $\sigma$ there exists a constant $C$ such that $\phi_{\sigma}*f_0 \ge Cf_0$ for all $x \in [0, 1]$. Especially,   $f_0$ satisfies $\phi_{\sigma}*f_0 \ge f_0/3$ for $ x\in A^c_{\sigma}$. Also in the proof of Lemma \ref{lem:density} we can find $\rho \in (0,1)$ such that $f_\beta > \rho f_0$. Then, on set $A_\sigma$ with sufficiently small $\sigma$, we have
\begin{eqnarray*}
\phi_{\sigma}*f_j  \ge \rho \phi_{\sigma}*f_0  \ge K f_0,
\end{eqnarray*}
where $K = \min\{\rho/3,\rho\,C\}$. Applying \eqref{eq:approximation}, the first integral on the r.h.s. of \eqref{integral} can be bounded by 
\begin{align*} 
\int_{A_\sigma} \frac{(f_0 - \phi_{\sigma}*f_j)^2}{\phi_{\sigma}* f_j} 
&\le \int_{A_{\sigma}} \frac{[f_0(x) - f_0(x)(1 + O(D(x)\sigma^{\beta}))]^2}{K f_0(x)} \\
&\precsim \int_{A_{\sigma}} f_0(x) O(D^2(x)\sigma^{2\beta}) = O(\sigma^{2\beta}) \nonumber.
\end{align*}
To bound the second integral of r.h.s in \eqref{integral}, according to Remark 3 in \cite{ghosal1999posterior} we get $\phi_{\sigma}*f_j \ge \rho f_0/3$, then we can find a constant $C < 1$ such that $\phi_{\sigma}*f_j \ge C f_0$. The second and third term in \eqref{integral} can be bounded by $O(\sigma^{2\beta})$ based on Lemma \ref{lem:tail}.
\end{proof}

\begin{lem}\label{lem:entropy}
Let $\mathbb{H}_1^a$ denote the unit ball of RKHS of the Gaussian process with rescaled parameter $a$ and $\mathbb{B}_1$ be the unit ball of $C[0,1]$. For $r >1$, there exists a constant $K$, such that for $\epsilon < 1/2$,
\begin{eqnarray}\label{eq:entropy}
\log N(\epsilon, \cup_{a \in [0,r]} \mathbb{H}_1^a, \norm{\cdot}_{\infty}) \le Kr\bigg( \log\frac{1}{\epsilon} \bigg)^2. 
\end{eqnarray}
\end{lem}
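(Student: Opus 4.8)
The plan is to reduce the statement to the well-known single-scale entropy bound for the squared-exponential RKHS ball (which we take from \cite{van2009adaptive}), by combining a monotonicity of the rescaled RKHS norms in the bandwidth with a dyadic partition of the range $[0,r]$. First I would pass to the whole real line: for a stationary Gaussian process the RKHS over $[0,1]$ is the restriction of the RKHS over $\mathbb{R}$ with a (minimal-extension) norm that is no larger, so $\mathbb{H}_1^a$ is contained in the set of restrictions to $[0,1]$ of the unit ball of the $\mathbb{R}$-RKHS; hence it suffices to cover that larger set in $\norm{\cdot}_\infty$ on $[0,1]$. Using the Fourier-analytic description of the rescaled squared-exponential RKHS, a member $h$ of the unit ball of $\mathbb{H}^a(\mathbb{R})$ satisfies $a\int |\widehat{h}(\lambda)|^2 e^{\lambda^2/(4a^2)}\,d\lambda \precsim 1$. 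Two consequences are immediate: (i) for $a\le b$ one has $\norm{h}_{\mathbb{H}^b}^2 \le (b/a)\,\norm{h}_{\mathbb{H}^a}^2$, since $e^{\lambda^2/(4b^2)}\le e^{\lambda^2/(4a^2)}$; (ii) by Cauchy--Schwarz, $h$ extends to an entire function with $|h(x+iy)|\precsim e^{2a^2 y^2}$ and $|h|\precsim 1$ on $\mathbb{R}$, with constants independent of $a$ and $x$.

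From (i), for every integer $j\ge 1$ we get $\bigcup_{a\in[2^{j-1},\,2^j]}\mathbb{H}_1^a \subseteq \sqrt{2}\,\mathbb{H}_1^{2^j}$; from (ii), $\bigcup_{a\in[0,1]}\mathbb{H}_1^a$ is contained in the fixed class $\mathcal{E}$ of entire functions $h$ with $|h(x+iy)|\precsim e^{2y^2}$, restricted to $[0,1]$. Next I would invoke the single-scale estimate: for $a\ge 1$ and $\epsilon<1/2$, $\log N(\epsilon,\mathbb{H}_1^a,\norm{\cdot}_\infty)\le K_0\,a\big(\log(1/\epsilon)\big)^2$, and likewise $\log N(\epsilon,\mathcal{E},\norm{\cdot}_\infty)\le K_0\big(\log(1/\epsilon)\big)^2$; both are the classical metric-entropy bounds for balls of analytic functions of order two and finite type, exactly as used in the proof of Theorem~3.1 of \cite{van2009adaptive}. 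Scaling a set by $\sqrt{2}$ only replaces $\log(1/\epsilon)$ by $\log(\sqrt{2}/\epsilon)\le 2\log(1/\epsilon)$ for $\epsilon<1/2$, so it costs a constant factor.

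Finally, with $N:=\lceil\log_2 r\rceil$ I would estimate
\begin{align*}
N\Big(\epsilon, \bigcup_{a\in[0,r]}\mathbb{H}_1^a, \norm{\cdot}_\infty\Big)
&\le N(\epsilon,\mathcal{E},\norm{\cdot}_\infty) + \sum_{j=1}^{N} N\big(\epsilon,\sqrt{2}\,\mathbb{H}_1^{2^j},\norm{\cdot}_\infty\big) \\
&\le N(\epsilon,\mathcal{E},\norm{\cdot}_\infty) + N\, N\big(\epsilon,\sqrt{2}\,\mathbb{H}_1^{2^N},\norm{\cdot}_\infty\big),
\end{align*}
take logarithms, and bound $2^N\le 2r$, $N\le 1+\log_2 r$, and $\log N \precsim \log\log r$. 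Since $r>1$ and $\epsilon<1/2$ (so $\log(1/\epsilon)$ is bounded away from $0$), the $\log N$ term and the $\mathcal{E}$ term are each $\precsim r\big(\log(1/\epsilon)\big)^2$, while the dominant term is $\precsim 2^N\big(\log(1/\epsilon)\big)^2 \precsim r\big(\log(1/\epsilon)\big)^2$; collecting constants gives $\log N(\epsilon,\bigcup_{a\in[0,r]}\mathbb{H}_1^a,\norm{\cdot}_\infty)\le K r\big(\log(1/\epsilon)\big)^2$.

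The main obstacle is genuinely the single-scale estimate $\log N(\epsilon,\mathbb{H}_1^a,\norm{\cdot}_\infty)\precsim a\big(\log(1/\epsilon)\big)^2$ — the only analytic ingredient — but this is a known result that we import from \cite{van2009adaptive}. The only point requiring care on our side is the behaviour near $a=0$, where the norm comparison in (i) degenerates; we avoid it by the crude containment $\bigcup_{a\le 1}\mathbb{H}_1^a\subseteq\mathcal{E}$. The remaining work — the dyadic net on $[1,r]$, the norm monotonicity (i), and absorbing lower-order logarithmic factors via $r>1$ — is routine bookkeeping.
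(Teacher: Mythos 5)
Your proof is correct, but it takes a genuinely different route from the paper's. The paper covers $[0,r]$ with a linearly spaced grid of $\approx r$ scales $\{a_i\}$ and argues, citing \cite{bhattacharya2014anisotropic} and \cite{van2009adaptive}, that the piecewise-polynomial $\epsilon$-net built at scale $a_i$ also serves as an $\epsilon$-net for $\mathbb{H}_1^b$ whenever $|b-a_i|\le 1$ — a stability property of the \emph{particular net construction} — and then invokes monotonicity of the net cardinality in $a$ to bound the union by $k\cdot\#(\mathcal{F}^r)$. You instead prove a \emph{structural} containment of the RKHS balls themselves, $\mathbb{H}_1^a\subseteq\sqrt{b/a}\,\mathbb{H}_1^b$ for $a\le b$, read directly off the spectral representation, which lets you get away with a dyadic grid of only $O(\log r)$ scales and treat the single-scale bound $\log N(\epsilon,\mathbb{H}_1^a,\norm{\cdot}_\infty)\precsim a(\log(1/\epsilon))^2$ as a black box; the small-$a$ degeneracy of the norm comparison is correctly sidestepped by the observation that for $a\le 1$ the Cauchy--Schwarz estimate on $|\widehat h|$ forces a uniform entire-function envelope $|h(x+iy)|\precsim e^{2y^2}$. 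What your approach buys is independence from the internals of the piecewise-polynomial net (you never need to know \emph{which} net covers $\mathbb{H}_1^a$, only how large it is), at the cost of the Fourier computation behind the norm monotonicity; what the paper's buys is that it hews closer to the explicit construction in \cite{van2009adaptive}, at the cost of leaning on an un-derived stability claim for the net across neighbouring scales. Both arguments correctly absorb the $O(\log r)$ (or $O(\log\log r)$) lower-order additive terms using $r>1$ and $\log(1/\epsilon)\ge\log 2$.
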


\begin{proof}
 Since we can write any element of $\mathbb{H}_1^a$ as a function of $\operatorname{Re}(z)$ by Lemma 4.5 in \cite{van2009adaptive}, and an $\epsilon$-net denoted by $\mathcal{F}^a$ over $\mathbb{H}_1^a$ is constructed through a finite set of piece-wise polynomial functions, and according to Lemma 4.4 and Lemma 4.5 in \cite{bhattacharya2014anisotropic}, 
$\mathcal{F}^a$ also forms an $\epsilon$-net over $\mathbb{H}_1^b$ as long as $a$ is sufficiently close to $b$. Thus we can find one set $\Gamma = \{a_i, i = 1, \dots, k\}$ with $k = \lfloor r\rfloor +1$ and $a_k = r$, such that for any $b\in [0,r]$ there exists some $a_i$ satisfying $|b-a_i|\le 1$, so that $\cup_{i\le k}\mathcal{F}^{a_i}$ forms an $\epsilon$-net over $\cup_{a \le r} \mathbb{H}_1^a$. Since the covering number of $\cup_{i\le k}\mathcal{F}^{a_i}$ is bounded by summation of covering number of $\mathcal{F}^{a_i}$, we obtain
\begin{eqnarray*}
\log N\big(\epsilon, \cup_{a \in [0,r]} \mathbb{H}_1^a, \|\cdot\|_{\infty}\big) \le \log \bigg(\sum_{i=1}^k \#(\mathcal{F}^{a_i})\bigg) \le \log(k \cdot \#(\mathcal{F}^r)) \le  Kr\bigg( \log\frac{1}{\epsilon} \bigg)^2.
 \end{eqnarray*} 
Here we write $\#(A)$ to denote the cardinality of any arbitrary set $A$. To prove the second inequality above, note that the piece-wise polynomials are constructed on the partition over $[0,1]$, denoted by $\cup_{i\le m}B_i$, where $B_i$'s are disjoint interval with length $R$ that can be considered as a non-increasing function of $a$,
so the total number of polynomials is non-decreasing in $a$. Also we find that when building the mesh grid of the coefficients of polynomials in each $B_i$, both the approximation error and tail estimate are invariant to interval length $R$, therefore we have $\#(\mathcal{F}^a) \le \#(\mathcal{F}^b)$ if $a \le b$, for $a,b \in [0,r]$.  
\end{proof}
\begin{remark}
With larger $a$ we need a finer partition on $[0,1]$ while the grid of coefficients of piece-wise polynomial remains the same except the range and the meshwidth will change together along with $a$. Since we can see the element $h$ of RKHS ball as a function of $it$ and with Cauchy formula we can bound the derivatives of $h$ by $C/R^n$, where $|h|^2 \le C^2$. 
\end{remark}

\bibliography{References, gpt28_1}

\begin{thebibliography}{}

\bibitem[Bhattacharya et~al., 2014]{bhattacharya2014anisotropic}
Bhattacharya, A., Pati, D., and Dunson, D. (2014).
\newblock Anisotropic function estimation using multi-bandwidth gaussian
  processes.
\newblock {\em Annals of statistics}, 42(1):352.

\bibitem[Bhattacharya et~al., 2019]{bhattacharya2019fractional}
Bhattacharya, A., Pati, D., and Yang, Y. (2019).
\newblock Bayesian fractional posteriors.
\newblock {\em Ann. Statist.}, 47(1):39--66.

\bibitem[Dasgupta et~al., 2017]{dasgupta2017geometric}
Dasgupta, S., Pati, D., and Srivastava, A. (2017).
\newblock A geometric framework for density modeling.
\newblock {\em Statistica Sinica}.

\bibitem[Devroye, 1992]{devroye1992note}
Devroye, L. (1992).
\newblock A note on the usefulness of superkernels in density estimation.
\newblock {\em The Annals of Statistics}, pages 2037--2056.

\bibitem[Escobar and West, 1995]{escobar1995bayesian}
Escobar, M. and West, M. (1995).
\newblock {Bayesian Density Estimation and Inference Using Mixtures.}
\newblock {\em Journal of the American Statistical Association},
  90(430):577--588.

\bibitem[Ferguson, 1973]{ferguson1973bayesian}
Ferguson, T. (1973).
\newblock {A Bayesian analysis of some nonparametric problems}.
\newblock {\em The Annals of Statistics}, 1(2):209--230.

\bibitem[Ferguson, 1974]{ferguson1974prior}
Ferguson, T. (1974).
\newblock {Prior distributions on spaces of probability measures}.
\newblock {\em The Annals of Statistics}, 2(4):615--629.

\bibitem[Ferris et~al., 2007]{ferris2007wifi}
Ferris, B., Fox, D., and Lawrence, N. (2007).
\newblock Wifi-slam using gaussian process latent variable models.
\newblock In {\em Proceedings of the 20th International Joint Conference on
  Artificial Intelligence}, pages 2480--2485.

\bibitem[Figurnov et~al., 2018]{figurnov2018implicit}
Figurnov, M., Mohamed, S., and Mnih, A. (2018).
\newblock Implicit reparameterization gradients.
\newblock In {\em Advances in Neural Information Processing Systems}, pages
  441--452.

\bibitem[Ghosal et~al., 1999]{ghosal1999posterior}
Ghosal, S., Ghosh, J., and Ramamoorthi, R. (1999).
\newblock {Posterior consistency of Dirichlet mixtures in density estimation}.
\newblock {\em The Annals of Statistics}, 27(1):143--158.

\bibitem[Ghosal et~al., 2000a]{ghosal2000convergence}
Ghosal, S., Ghosh, J., and van~der Vaart, A. (2000a).
\newblock {Convergence rates of posterior distributions}.
\newblock {\em Annals of Statistics}, 28(2):500--531.

\bibitem[Ghosal et~al., 2000b]{ghosal2000}
Ghosal, S., Ghosh, J.~K., and van~der Vaart, A.~W. (2000b).
\newblock Convergence rates of posterior distributions.
\newblock {\em Ann. Statist.}, 28(2):500--531.

\bibitem[Ghosal and van~der Vaart, 2007]{ghosal2007posterior}
Ghosal, S. and van~der Vaart, A. (2007).
\newblock {Posterior convergence rates of Dirichlet mixtures at smooth
  densities}.
\newblock {\em The Annals of Statistics}, 35(2):697--723.

\bibitem[Husz{\'a}r, 2017]{huszar2017variational}
Husz{\'a}r, F. (2017).
\newblock Variational inference using implicit distributions.
\newblock {\em arXiv preprint arXiv:1702.08235}.

\bibitem[Jankowiak and Obermeyer, 2018]{jankowiak2018pathwise}
Jankowiak, M. and Obermeyer, F. (2018).
\newblock Pathwise derivatives beyond the reparameterization trick.
\newblock {\em arXiv preprint arXiv:1806.01851}.

\bibitem[Kingma et~al., 2015]{kingma2015variational}
Kingma, D.~P., Salimans, T., and Welling, M. (2015).
\newblock Variational dropout and the local reparameterization trick.
\newblock In {\em Advances in neural information processing systems}, pages
  2575--2583.

\bibitem[Kingma and Welling, 2013]{kingma2013auto}
Kingma, D.~P. and Welling, M. (2013).
\newblock Auto-encoding variational bayes.
\newblock {\em arXiv preprint arXiv:1312.6114}.

\bibitem[Kruijer et~al., 2010]{kruijer2010adaptive}
Kruijer, W., Rousseau, J., and van~der Vaart, A. (2010).
\newblock Adaptive bayesian density estimation with location-scale mixtures.
\newblock {\em Electronic Journal of Statistics}, 4:1225--1257.

\bibitem[Kundu and Dunson, 2014]{kundu2014latent}
Kundu, S. and Dunson, D.~B. (2014).
\newblock Latent factor models for density estimation.
\newblock {\em Biometrika}, 101(3):641--654.

\bibitem[Lawrence, 2004]{lawrence2004gaussian}
Lawrence, N. (2004).
\newblock Gaussian process latent variable models for visualisation of high
  dimensional data.
\newblock In {\em Advances in neural information processing systems 16:
  proceedings of the 2003 conference}, volume~16, page 329. The MIT Press.

\bibitem[Lawrence, 2005]{lawrence2005probabilistic}
Lawrence, N. (2005).
\newblock Probabilistic non-linear principal component analysis with gaussian
  process latent variable models.
\newblock {\em The Journal of Machine Learning Research}, 6:1783--1816.

\bibitem[Lawrence and Moore, 2007]{lawrence2007hierarchical}
Lawrence, N. and Moore, A. (2007).
\newblock Hierarchical gaussian process latent variable models.
\newblock In {\em Proceedings of the 24th international conference on Machine
  learning}, pages 481--488. ACM.

\bibitem[Lenk, 1988]{lenk1988lognormal}
Lenk, P.~J. (1988).
\newblock The logistic normal distribution for bayesian, nonparametric,
  predictive densities.
\newblock {\em Journal of the American Statistical Association},
  83(402):509--516.

\bibitem[Lenk, 1991]{lenk1991nonparametric}
Lenk, P.~J. (1991).
\newblock Towards a practicable bayesian nonparametric density estimator.
\newblock {\em Biometrika}, 78(3):531--543.

\bibitem[MacEachern, 1999]{maceachern1999dependent}
MacEachern, S. (1999).
\newblock {Dependent nonparametric processes}.
\newblock In {\em Proceedings of the Section on Bayesian Statistical Science},
  pages 50--55.

\bibitem[Molchanov et~al., 2019]{molchanov2019doubly}
Molchanov, D., Kharitonov, V., Sobolev, A., and Vetrov, D. (2019).
\newblock Doubly semi-implicit variational inference.
\newblock In {\em The 22nd International Conference on Artificial Intelligence
  and Statistics}, pages 2593--2602.

\bibitem[M\"uller et~al., 1996]{muller1996bayesian}
M\"uller, P., Erkanli, A., and West, M. (1996).
\newblock {Bayesian curve fitting using multivariate normal mixtures}.
\newblock {\em Biometrika}, 83(1):67--79.

\bibitem[Shi et~al., 2017]{shi2017kernel}
Shi, J., Sun, S., and Zhu, J. (2017).
\newblock Kernel implicit variational inference.
\newblock {\em arXiv preprint arXiv:1705.10119}.

\bibitem[Titsias and Lawrence, 2010]{titsias2010bayesian}
Titsias, M. and Lawrence, N.~D. (2010).
\newblock Bayesian gaussian process latent variable model.
\newblock In {\em Proceedings of the Thirteenth International Conference on
  Artificial Intelligence and Statistics}, pages 844--851.

\bibitem[Titsias and Ruiz, 2019]{titsias2019unbiased}
Titsias, M.~K. and Ruiz, F. (2019).
\newblock Unbiased implicit variational inference.
\newblock In {\em The 22nd International Conference on Artificial Intelligence
  and Statistics}, pages 167--176.

\bibitem[Tokdar, 2007]{tokdar2007estimation}
Tokdar, S.~T. (2007).
\newblock Towards a faster implementation of density estimation with logistic
  gaussian process priors.
\newblock {\em Journal of Computational and Graphical Statistics},
  16(3):633--655.

\bibitem[Tokdar et~al., 2010]{tokdar2010regression}
Tokdar, S.~T., Zhu, Y.~M., and Ghosh, J.~K. (2010).
\newblock Bayesian density regression with logistic gaussian process and
  subspace projection.
\newblock {\em Bayesian Anal.}, 5(2):319--344.

\bibitem[van~der Vaart and van Zanten, 2007]{van2007bayesian}
van~der Vaart, A. and van Zanten, J. (2007).
\newblock {Bayesian inference with rescaled Gaussian process priors}.
\newblock {\em Electronic Journal of Statistics}, 1:433--448.

\bibitem[van~der Vaart and van Zanten, 2008]{van2008rates}
van~der Vaart, A. and van Zanten, J. (2008).
\newblock Rates of contraction of posterior distributions based on gaussian
  process priors.
\newblock {\em The Annals of Statistics}, 36(3):1435--1463.

\bibitem[van~der Vaart and van Zanten, 2009]{van2009adaptive}
van~der Vaart, A. and van Zanten, J. (2009).
\newblock {Adaptive Bayesian estimation using a Gaussian random field with
  inverse Gamma bandwidth}.
\newblock {\em The Annals of Statistics}, 37(5B):2655--2675.

\bibitem[Wand and Jones, 1994]{wand1994kernel}
Wand, M.~P. and Jones, M.~C. (1994).
\newblock {\em Kernel smoothing}.
\newblock Crc Press.

\bibitem[Yang et~al., 2020]{yang2020alpha}
Yang, Y., Pati, D., and Bhattacharya, A. (2020).
\newblock $\alpha$-{V}ariational inference with statistical guarantees.
\newblock {\em Annals of Statistics}, 48(2):886--905.

\bibitem[Yin and Zhou, 2018]{yin2018semi}
Yin, M. and Zhou, M. (2018).
\newblock Semi-implicit variational inference.
\newblock {\em arXiv preprint arXiv:1805.11183}.

\end{thebibliography}
\end{document}